 \let\temp\phi
\let\phi\varphi
\let\varphi\temp
\newcommand{\C}{\mathbb{C}}
\newcommand{\N}{\mathbb{N}}
\newcommand{\R}{\mathbb{R}}
\DeclareMathOperator{\Span}{span}
\newcommand{\calM}{\mathcal{M}}
\newcommand{\calP}{\mathcal{P}}
\newcommand{\calS}{\mathcal{S}}
\newcommand{\calT}{\mathcal{T}}
\DeclareMathOperator{\conv}{conv}
\newcommand{\Cmin}{C^\ast_{\text{min}}}
\DeclareMathOperator{\Real}{Re}
\DeclareMathOperator{\Imag}{Im}
\newcommand{\CB}{\text{CB}}
\theoremstyle{plain}
\newtheorem{lemma}{Lemma}[section]
\newtheorem{theorem}[lemma]{Theorem}
\newtheorem{proposition}[lemma]{Proposition}
\newtheorem{corollary}[lemma]{Corollary}
\theoremstyle{definition}
\newtheorem{question}[lemma]{Question}
\newtheorem{example}[lemma]{Example}
\newtheorem{remark}[lemma]{Remark}
\newtheorem{definition}[lemma]{Definition}
\definecolor{darkblue}{rgb}{0.0,0.0,0.3}
\newenvironment{introtheorem}[1]{%
  \introtheoreminner
}{\endintrotheoreminner}
\newcommand{\ncconv}{\mathrm{ncconv}}
\newcommand{\cncconv}{\overline{\mathrm{ncconv}}}
\newcommand{\real}{\text{Re }}
\newcommand{\im}{\text{Im }}
\newcommand{\sa}{\text{sa}}
\title[Extensions of nc convex sets and operator system duality]{An extension property for noncommutative convex sets and duality for operator systems}
\author[A. Humeniuk]{Adam Humeniuk}
\address{Department of Mathematics \& Computing\\ Mount Royal University\\
Calgary, Alberta, Canada}
\email{ahumeniuk@mtroyal.ca}
\author[M. Kennedy]{Matthew Kennedy}
\address{Department of Pure Mathematics\\ University of Waterloo\\
Waterloo, Ontario, Canada}
\email{matt.kennedy@uwaterloo.ca}
\author[N. Manor]{Nicholas Manor}
\address{Department of Pure Mathematics\\ University of Waterloo\\
Waterloo, Ontario, Canada}
\email{nmanor@uwaterloo.ca}
\begin{document}
\begin{abstract}
We characterize inclusions of compact noncommutative convex sets with the property that every continuous affine function on the smaller set can be extended to a continuous affine function on the larger set with a uniform bound. As an application of this result, we obtain a simple geometric characterization of (possibly nonunital) operator systems that are dualizable, meaning that their dual can be equipped with an operator system structure. We further establish some permanence properties of dualizability, and provide a large new class of dualizable operator systems. These results are new even when specialized to ordinary compact convex sets.

\end{abstract}

\thanks{2020 {\it  Mathematics Subject Classification.} Primary:
46A55, 
46L07, 
46L52, 
47L25, 
47L50 
}

\thanks{Second author supported by NSERC Grant Number 418585.}

\maketitle

\tableofcontents

\section{Introduction}

A unital operator system $S$ is a $\ast$-closed unital subspace of the bounded operators $B(H)$ on a Hilbert space $H$. In this paper, we assume that all operator spaces and operator systems are norm-complete. Choi and Effros \cite{choi1977injectivity} gave an abstract characterization of unital operator systems as matrix ordered $\ast$-vector spaces which contain an archimedean matrix order unit. In light of this, it is natural to ask if the dual space $S^\ast$ is an abstract unital operator system. 


The dual $S^\ast$ is at least a complete operator space, and inherits a $\ast$-operation and matrix ordering from $S$. One says that $S^\ast$ is a \emph{matrix ordered operator space}. However, $S^\ast$ typically fails to have an order unit in infinite dimensions.
So, one requires a theory of \emph{nonunital} operator systems if $S^\ast$ is to be an operator system.

Werner \cite{werner2002subspaces} defined nonunital operator systems, which we hereafter refer to simply as ``operator systems'', as matrix ordered operator spaces which embed completely isometrically and completely order isomorphically into $B(H)$. Werner gave an abstract characterization of operator systems that extends the Choi-Effros axioms in the unital setting. One would hope that $S^\ast$ is such an operator system, but it turns out that this is too much to ask for.


It is natural to say that an operator system $S$ is \emph{dualizable} if the dual matrix ordered operator space $S^\ast$ embeds into $B(H)$ via a map which is both a complete order isomorphism and is \emph{completely bounded below}. That is, $S^\ast$ can be re-normed with completely equivalent matrix norms in a way that makes it an operator system.

It is easy to construct examples of non-dualizable operator systems. For instance, the operator system $S$ consisting of $2 \times 2$ matrices with diagonal entries equal to zero, i.e.
\[
S = \text{span}\{E_{12},E_{21}\} \subseteq M_2,
\]
contains no nonzero positive elements, implying in particular that the set of positive elements in $S^\ast$ contains a line, precluding the existence of an order isomorphism into $B(H)$.

Recently, Ng \cite{ng2022dual} obtained an intrinsic characterization of dualizability for operator systems. Specifically, he proved that an operator system $S$ is dualizable if and only if it has the {\em completely bounded positive decomposition property}, meaning that there is a constant $C>0$ such that for every $n \in \mathbb{N}$ and every self-adjoint element $x\in M_n(S)^\sa$, there are positives $y,z\in M_n(S)^+$ with $x=y-z$ and $\|y\|+\|z\|\le C\|x\|$. Ng observed that every unital operator system $S$ is dualizable with $C=2$. Further, using the continuous functional calculus, he showed that every C*-algebra is dualizable with $C=1$. So the dualizable operator systems do form a large subclass of operator systems.

\bigskip

It turns out that the property of dualizability for operator systems is closely related to an important extension property for noncommutative convex sets. These are the main objects of interest in the theory of noncommutative convexity, which was recently introduced by Davidson and the second author \cite{davidson2019noncommutative}.

A noncommutative (nc) convex set $K$ over an operator space $E$ is a graded set
\[
K=\coprod_{n} K_n \subseteq \coprod_{n} M_n(E)
\]
that is closed under direct sums and compression by scalar isometries. Equivalently, $K$ is closed under nc convex combinations, meaning that $\sum \alpha_i^* x_i \alpha_i \in K_n$ for every bounded family of points $x_i \in K_{n_i}$ and every family of scalar matrices $\alpha_i \in M_{n_i,n}$. If each $K_n$ is compact, then $K$ is said to be compact.

An important point is that the above union is taken over all $n \leq \kappa$ for some sufficiently large cardinal number $\kappa$, with the convention $M_n=B(H_n)$ for a Hilbert space $H_n$ of dimension $n$. In the separable setting, it typically suffices to take $\kappa = \aleph_0$. This is in contrast to Wittstock's definition of a matrix convex set, where the union is taken over all $n < \aleph_0$. While every matrix convex set uniquely determines a nc convex set, the ability to work with points at infinity is essential for a robust theory (see  \cite{davidson2019noncommutative} for more details).

The duality theorem from \cite{davidson2019noncommutative}, which is essentially equivalent to a result of Webster-Winkler \cite{webster1999krein} for compact matrix convex sets, asserts that the category of compact nc convex sets is dual to the category of unital operator systems. In particular, every unital operator system is isomorphic to the operator system $A(K)$ of continuous affine nc functions on $K$, where $K = \calS(S)$ is the nc state space of $S$, i.e. 
\[
K_n = \{\phi:S\to M_n\mid \phi \text{ is unital and completely positive}\}.
\]

We say that an inclusion $K \subseteq L$ of compact nc convex sets has the {\em bounded extension property} if there is a constant $C > 0$ such that every continuous affine nc function $a \in A(K)$ extends to a continuous affine nc function $b \in A(L)$ such that $\|b\| \leq C \|a\|$. In Section \ref{sec:extension}, we establish the following characterization of the bounded extension property in terms of the operator space structure of the restriction map $A(L) \to A(K)$, and in terms of the geometry of the inclusion $K \subseteq L$.

\begin{introtheorem}{A} \label{thm:A}
Let $K \subseteq L$ be an inclusion of compact nc convex sets. The following are equivalent:
\begin{enumerate}
\item The inclusion $K \subseteq L$ has the bounded extension property.
\item The restriction map $A(L) \to A(K)$ is an operator space quotient map.
\item There is a constant $C > 0$ such that $(K - K) \cap \operatorname{span}_{\mathbb{R}} L \subseteq C(L - L)$.
\end{enumerate}
\end{introtheorem}

The duality theorem from \cite{kennedy2023nonunital} extends the duality theorem for unital operator systems to (possibly nonunital) operator systems. It asserts that the category of pointed compact nc convex sets is dual to the category of operator systems, where a pointed compact convex set is a pair $(K,z)$ consisting of a compact nc convex set $K$ and a distinguished point $z \in K_1$ that behaves like zero in a certain precise sense. In particular, every operator system is isomorphic to the operator system $A(K,z)$ of continuous affine nc functions on $K$ that vanish at $z = 0$, where $K = \mathcal{QS}(S)$ is the nc quasistate space of $S$, i.e.
\[
K_n = \{\phi:S\to M_n\mid \phi \text{ is completely contractive and completely positive}\}.
\]
This result was utilized to provide additional insight on some recent results of Connes and van Suijlekom \cite{connes2021spectral}.

In Section \ref{sec:dualizability}, as an application of Theorem \ref{thm:A} and the duality between operator systems and pointed compact nc convex sets, we obtain a simple new geometric characterization of dualizability for operator systems.

\begin{introtheorem}{B} \label{thm:B}
Let $S$ be an operator system with nc quasistate space $K$. The following are equivalent: 
\begin{enumerate}
\item The operator system $S$ is dualizable.
\item There is a constant $C > 0$ such that $(K - \mathbb{R}_+ K) \cap \mathbb{R}_+ K \subseteq CK$.
\item The set $(K - \mathbb{R}_+ K) \cap \mathbb{R}_+ K$ is bounded.
\end{enumerate}
\end{introtheorem}

In Section \ref{sec:nc-simplices}, we prove the dualizability of a large new class of operator systems. Specifically, we consider operator systems with nc quasistate spaces that are nc simplices in the sense of \cite{kennedy2022noncommutative}. Although this class of operator systems contains all C*-algebras, it is much larger.

\begin{introtheorem}{C} \label{thm:C}
Let $S$ be an operator system with nc quasistate space $K$. If $K$ is an nc simplex and $0 \in K_1$ is an extreme point in $K$, then $S$ is dualizable.
\end{introtheorem}

In Section \ref{sec:commutative}, we discuss the existing corresponding classical duality theory for function systems, which are the commutative operator systems. Here, the dual of a function system is again equivalent to a function system if we have (ordinary) bounded positive decomposition. By completeness, this is equivalent to ordinary positive generation. However, we don't know if positive generation ensures completely bounded positive generation for function systems.

In Section \ref{sec:positive_generation}, we relate the completely bounded positive decomposition property of an operator system to the property of being positively generated. In contrast to the classical situation for function systems in Section \ref{sec:commutative}, positive generation at all matrix levels is not enough to imply the completely bounded positive decomposition property. We provide an example of a matrix ordered operator space that is positively generated but does not have the bounded positive decomposition property. However, we do show that positive generation of an operator system at the first level is enough to automatically imply positive generation at all matrix levels.

Finally, in Section \ref{sec:permanence}, we establish some permanence properties of dualizability, showing that quotients and pushouts of dualizable operator systems are again dualizable.

\section*{Acknowledgements}
The authors are grateful to David Blecher, Ken Davidson, Nico Spronk, Ivan Todorov and Vern Paulsen for their helpful comments and suggestions. The authors are also grateful to C.K. Ng for providing a preprint of \cite{ng2022dual}.

\section{Background}\label{sec:background}

\subsection{Nonunital operator systems}

All vector spaces in this paper are over $\C$, unless stated otherwise. If $V$ is a vector space and $n\in \N$, we let $M_n(V)$ be the vector space of $n\times n$-matrices with entries in $V$. We will typically identify $M_n(V)$ with $M_n\otimes V$, and write for instance
\[
1_2 \otimes x =
\begin{pmatrix}
	x & 0 \\
	0 & x
\end{pmatrix},
\]
where $x\in V$ and $1_2\in M_2$ is the identity matrix. We will also use the notation
\[
\calM(V):=\coprod_{n\ge 1} M_n(V)
\]
to denote the matrix universe over $V$.

If $V$ is any normed vector space and $r\ge 0$, we will frequently use $B_r(V)$ to denote the closed ball in $V$ with radius $r$ and center $0\in V$.

Following Ng \cite{ng2022dual}, we fix the following definitions. An \textbf{operator space} $E$ is a vector space equipped with a complete family of $L^\infty$-matrix norms, which we will denote either by $\|\cdot\|$, $\|\cdot\|_E$, or $\|\cdot\|_{M_n(E)}$ as appropriate.

\begin{definition}\label{def:mos}
A \textbf{semi-matrix ordered operator space} $(X,P)$ consists of an operator space $X$ equipped with a conjugate-linear completely isometric involution $x\mapsto x^\ast$, and a distinguished selfadjoint matrix convex cone $P=\coprod_{n\ge 1}P_n\subseteq \coprod_n M_n(X)^\sa$ such that each $P_n$ is norm-closed in $M_n(X)$. Usually we omit the symbol $P$ and write $M_n(X)^+:=P_n$. If in addition each cone $M_n(X)^+$ satisfies $M_n(X)^+\cap (-M_n(X)^+)=\{0\}$, then we say $X$ is a \textbf{matrix ordered operator space}. If $X$ is in addition a dual space $X=(X_\ast)^\ast$, we say $X$ is a \textbf{dual matrix ordered operator space} if the positive cones $M_n(X)^+$ are weak-$\ast$ closed.
\end{definition}

\begin{definition}\label{def:positively_generated}
A semi-matrix ordered operator space $X$ is \textbf{positively generated} if
\[
M_n(X)^\sa = M_n(X)^+-M_n(X)^+
\]
for all $n\ge 1$.
\end{definition}

\begin{example}\label{eg:dual_mos}
If $X$ is a positively generated matrix ordered operator space, then $X^\ast$ is naturally a dual matrix ordered operator space with the standard norm and order structure that identifies
\begin{align*}
M_n(X^\ast) & \cong \text{CB}(X,M_n) \quad\text{ isometrically, and}\\
M_n(X^\ast)^+ & \cong \text{CP}(X,M_n).
\end{align*}
\end{example}

\begin{definition}\label{def:mos_maps}
Let $X$ and $Y$ be matrix ordered operator spaces, and let $\phi:X\to Y$ be a linear map. For any $n\ge 1$, $\phi$ induces a linear map $\phi_n:M_n(X)\to M_n(Y)$. We say that $\phi$ is completely bounded, contractive, bounded below, isometric, positive, or a complete order isomorphism when each induced map $\phi_n$ satisfies the same property uniformly in $n$. If $\phi$ is completely bounded below and positive, we say $\phi$ is a \textbf{complete embedding}. If $\phi$ is completely isometric and positive, we say $\phi$ is a \textbf{completely isometric embedding}. If $\phi$ is also a linear isomorphism, we call $\phi$ a \textbf{complete isomorphism} or \textbf{completely isometric isomorphism} as appropriate.
\end{definition}

The class of all matrix ordered operator spaces forms a category, where one usually chooses the morphisms to be completely contractive and completely positive (ccp) maps, or completely bounded and completely positive (cbp) maps. In the interest of readability, we hereafter adopt the convention that ``\textbf{completely contractive and positive}" always means ``completely contractive and completely positive'', and similarly for ``completely bounded and positive''. That is, ``completely" modifies both the words ``contractive'' and ``positive''. Since we have no need to consider maps which are positive but not completely positive, there should be no risk of confusion.

\begin{example}\label{eg:operator_system_mos}
Let $S$ be a unital operator system, i.e. an $\ast$-matrix ordered space with archimedian matrix order unit $1_S$. Then $S$ is a matrix ordered operator space with norm
\[
\|x\|_{M_n(S)}=
\inf\left\{t\ge 0 \;\middle|\; \begin{pmatrix} t(1_n\otimes 1_s) & x \\
	x^\ast & t(1_n\otimes 1_s)
	\end{pmatrix}\ge 0 \text{ in }M_{2n}(S)^\sa
\right\}.
\]
This norm agrees with the induced norm from any unital complete order embedding $S\subseteq B(H)$. In particular, for any Hilbert space $H$, the space $B(H)$ is a unital operator system.
\end{example}

\begin{definition}\label{def:operator_system}
Let $S$ be a matrix ordered operator space. We say that $S$ is a \textbf{quasi-operator system} if there is a complete embedding $S\to B(H)$ for some Hilbert space $H$, and that $S$ is a \textbf{operator space} if there is a completely isometric embedding $S\to B(H)$. If $S$ is in addition a \emph{dual} matrix ordered operator space, then we say $S$ is a \text{dual (quasi-)operator system} if there is a weak-$\ast$ homeomorphic (complete embedding) completely isometric embedding into some $B(H)$.
\end{definition}

That is, a quasi-operator system $S$ is a matrix ordered operator space which is completely isomorphic to an operator system. Put another way, one can choose a completely equivalent system of norms on $S$, for which $S$ embeds completely isometrically and order isomorphically into $B(H)$.


\subsection{Pointed noncommutative convex sets}

Suppose that $E=(E_\ast)^\ast$ is a dual operator space. Let
\[
\calM(E):=\coprod_{n\ge 1} M_n(E),
\]
where the union is taken over all cardinals $n\ge 1$ up to some fixed cardinal $\alpha$ at least as large as the density character of $E$. (In practice we suppress $\alpha$.) When $n$ is infinite, we take the convention $M_n:=B(H_n)$, where $H_n$ is a Hilbert space of dimension $n$. By naturally identifying
\[
M_n(E)=
\CB(E_\ast,E),
\]
we may equip each $M_n(E)$ with its corresponding point-weak-$\ast$ topology. Note that if $E=M_k$, this is the just the usual weak-$\ast$ topology on $M_n(M_k)\cong M_{nk}$.

\begin{definition}\label{def:nc_convex_set}
We say that a graded subset 
\[K=\coprod_{n\ge 1} K_n \subseteq \calM(E)\]
 is an \textbf{nc convex set} if for every norm-bounded family $(x_i)\in K_{n_i}$ and every family of matrices $\alpha_i\in M_{n_i,n}$ which satisfies
\begin{equation}\label{eq:sum_alpha_i}
\sum_i \alpha_i^\ast \alpha_i = 1_n,
\end{equation}
we have
\begin{equation}\label{eq:nc_convex_combination}
\sum_i \alpha_i^\ast x_i \alpha_i \in K_n.
\end{equation}
Here the sums \eqref{eq:sum_alpha_i} and \eqref{eq:nc_convex_combination} are required to converge in the point-weak-$\ast$ topologies on $M_n$ and $M_n(E)$, respectively. We say in addition that $K$ is a \textbf{compact nc convex set} if each matrix level $K_n$ is point-weak-$\ast$ compact in $M_n(E)$.
\end{definition}

Usually we refer to the sum in \eqref{eq:nc_convex_combination} as an \textbf{nc convex combination} of the points $x_i$. Succinctly, an nc convex set is one that is \emph{closed under nc convex combinations}. It is equivalent to require only that $K$ is closed under direct sums \eqref{eq:sum_alpha_i} in which the $\alpha_i$'s are co-isometries with orthogonal domain projections, and \emph{compressions} \eqref{eq:nc_convex_combination} when there is only one $\alpha_i$, which must be an isometry.

\begin{definition}\label{def:affine}
Let $K$ and $L$ be nc convex sets. A function $a:K\to L$ is an \textbf{affine nc function} if it is graded
\[
a(K_n)\subseteq L_n,\quad\text{ for all }n\ge 1,
\]
and respects nc convex combinations, i.e. whenever $x_i\in K$ are bounded and $\alpha_i$ are scalar matrices of appropriate sizes such that $\sum_i \alpha_i^\ast x_i\alpha_i$, then
\[
a\Big(\sum_i \alpha_i^\ast x_i\alpha_i\Big) =
\sum_i \alpha_i^\ast a(x_i)\alpha_i.
\]
The function $a$ is \textbf{continuous} if each restriction $a\vert_{K_n}$ is point-weak-$\ast$ continuous.
\end{definition}

Kadison's classical representation theorem \cite{kadison1951representation} asserts that the category of \textbf{function systems}, or archimedean order unit spaces, is equivalent to the category of compact convex sets with continuous affine functions as morphisms. The following noncommutative generalization from \cite[Theorem 3.2.5]{davidson2019noncommutative} for compact nc convex sets is essentially equivalent to \cite[Proposition 3.5]{webster1999krein} for compact matrix convex sets.

\begin{theorem}\label{thm:unital_nc_kadison}
The category of unital operator systems with ucp maps as morphisms is contravariantly equivalent to the category of compact nc convex sets with continuous affine nc functions as morphisms. On objects, the essential inverse functors send an operator system $S$ to its nc state space
\[
\calS(S)=
\coprod_{n\ge 1}\{\phi:S\to M_n\mid \phi \text{ is unital and completely positive}\},
\]
and send a compact nc convex set $K$ to the operator system
\[
A(K) =
\{a:K\to \calM=\calM(\C)\mid a \text{ is a continuous affine nc function}\}.
\]
\end{theorem}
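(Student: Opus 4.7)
The plan is to construct the two contravariant functors and verify they are quasi-inverse via natural evaluation maps. For the state-space functor, one checks that if $S$ is a unital operator system then $\calS(S)_n$ is point-weak-$\ast$ compact by Banach-Alaoglu (since ucp maps are contractions) and that nc convex combinations of ucp maps are again ucp; on morphisms, a ucp map $\phi : S \to T$ induces the restriction $\psi \mapsto \psi \circ \phi$. For the affine-function functor, given a compact nc convex set $K$, I equip $A(K)$ with pointwise involution, the natural matrix order ($a \ge 0$ iff $a(K_n) \subseteq M_n^+$ for every $n$), and the matrix norm coming from the identification $M_n(A(K)) \cong A(K, \calM(M_n))$. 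The constant function $1$ is then an archimedean matrix order unit, and the Choi-Effros axioms for $A(K)$ are inherited pointwise from those of $\calM$.

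Next, I introduce the evaluation maps
\[
\ev_S : S \to A(\calS(S)), \quad s \mapsto (\phi \mapsto \phi(s)),
\]
and
\[
\ev_K : K \to \calS(A(K)), \quad x \mapsto (a \mapsto a(x)),
\]
which are clearly natural in $S$ and $K$. For $\ev_S$, unitality and complete positivity are immediate, injectivity follows because ucp maps into matrix algebras separate points of any operator system (a consequence of Arveson's extension theorem applied to a completely isometric embedding $S \hookrightarrow B(H)$), and complete isometry comes from the fact that the matrix norm on a unital operator system is determined by the matrix order via the formula in Example~\ref{eg:operator_system_mos}: positivity of $\left(\begin{smallmatrix} t \cdot 1_S & x \\ x^\ast & t \cdot 1_S \end{smallmatrix}\right)$ in $M_{2n}(S)$ can be tested by ucp maps into $M_{2n}$.

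The deeper content is the surjectivity of the two evaluation maps. For $\ev_S$, given $a \in A(\calS(S))$ one must produce $s \in S$ with $\hat{s} = a$; the standard approach is a matricial Hahn-Banach / bipolar argument on the dual pairing between $S$ and $A(\calS(S))$, exploiting the fact that $\ev_S(S)$ is a norm-closed unital selfadjoint subspace which separates the states of $A(\calS(S))$, forcing it to coincide with $A(\calS(S))$. Symmetrically, for $\ev_K$, surjectivity amounts to showing that every ucp map $\psi : A(K) \to M_n$ arises as evaluation at some $x \in K_n$: for finite $n$ this is the matrix convex Hahn-Banach theorem of Webster-Winkler, which already yields a point of $K_n$ realizing $\psi$.

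I expect the surjectivity of $\ev_K$ at infinite $n$ to be the main obstacle, since this is precisely where the nc framework departs from the matrix convex one: ``points at infinity'' in $K$ are needed to represent arbitrary ucp maps $A(K) \to B(H_n)$. I would handle it by bootstrapping from the finite case. Given a ucp map $\psi : A(K) \to B(H)$ with $\dim H$ infinite, for every finite-rank projection $p \in B(H)$ the compression $p \psi(\cdot) p$ is ucp into $p B(H) p \cong M_{\mathrm{rk}\, p}$, hence realized as evaluation at some $x_p \in K_{\mathrm{rk}\, p}$. Viewing each $x_p$ as a point in $K_{\dim H}$ via the canonical isometric dilation, the net $(x_p)_p$ has a cluster point in the point-weak-$\ast$ compact set $K_{\dim H}$, and this cluster point realizes $\psi$. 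With surjectivity of both evaluation maps in hand, functoriality and the naturality of $\ev_S$ and $\ev_K$ complete the equivalence.
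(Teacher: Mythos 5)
The paper does not prove Theorem \ref{thm:unital_nc_kadison} at all: it is quoted as background from \cite[Theorem 3.2.5]{davidson2019noncommutative} (with the finite-level precursor attributed to Webster--Winkler), so there is no in-paper argument to compare against. Your outline is essentially the standard proof of that cited result, and most of it is sound: the functor constructions, the evaluation maps, the complete order/isometry statements for $\ev_S$, and the reduction of $\ev_K$-surjectivity at infinite levels to the finite-level Webster--Winkler theorem by compressing against finite-rank projections and extracting a point-weak-$\ast$ cluster point.

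Two points need repair. First, the phrase ``viewing each $x_p$ as a point in $K_{\dim H}$ via the canonical isometric dilation'' does not parse in this framework: $K$ is closed under compressions $x\mapsto \alpha^\ast x\alpha$ by isometries $\alpha:\C^n\to\C^m$, which only move points \emph{down} from $K_m$ to $K_n$ with $n\le m$; there is no isometry producing a point of $K_{\dim H}$ from a point of $K_{\operatorname{rk} p}$. What you must do instead is pad by a direct sum, replacing $x_p$ with $x_p\oplus y_p$ for an arbitrary $y_p\in K_{\dim H-\operatorname{rk} p}$ (nonempty since $K_1\ne\emptyset$ and $K$ is closed under direct sums); the cluster-point argument then goes through because for a fixed finite-rank $q$ and all $p\ge q$ one has $q\,a(x_p\oplus y_p)\,q=q\,\psi(a)\,q$. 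Second, your surjectivity argument for $\ev_S$ is circular as stated: to know that $\ev_S(S)$ ``separates the states of $A(\calS(S))$'' forces equality, you use that those states are point evaluations at points of $\calS(S)$, which is exactly the surjectivity of $\ev_K$ for $K=\calS(S)$. This is fixable by proving $\ev_K$-surjectivity for arbitrary $K$ first and only then running the separation argument, or by following the route of \cite{davidson2019noncommutative}: extend $a\in A(\calS(S))$ linearly to $\operatorname{span}\calS(S)_1=S^\ast$ and use continuity on the compact set $\calS(S)_1$ together with the Krein--Smulian theorem to conclude the extension is weak-$\ast$ continuous, hence given by an element of $S$. With these two repairs (and using norm-completeness of $S$ to get norm-closedness of $\ev_S(S)$), your outline matches the standard proof.
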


The operator system structure and norm on $A(K)$ is pointwise, i.e. one identifies $M_n(A(K))\cong A(K,\calM(M_n))$, and declares a matrix valued affine nc function if it takes positive values at every point. The order unit is the ``constant function" $x\in K_n\mapsto 1_n\in M_n$. Both essential inverse functors act on morphisms by precomposition. That is, if $\pi:S\to T$ is a ucp map between operator systems, then the corresponding map on nc state spaces sends $\rho:T\to M_n$ to $\rho\pi:S\to M_n$. Likewise, if $a:K\to L$ is affine nc, then $f\mapsto f\circ a :A(L)\to A(K)$ is affine nc.

Recently, the second and third authors together with Kim \cite{kennedy2023nonunital} settled the question of Kadison duality for nonunital operator systems. The key challenge is that in the absence of order units, if $S$ is a nonunital operator system then one must remember the whole \textbf{nc quasistate space}
\[
\mathcal{QS}(S) =
\coprod_{n\ge 1}\{\phi:S\to M_n \mid \phi \text{ is contractive and completely positive}\}
\]
and consider \textbf{pointed} affine nc functions which fix the zero quasistates.

\begin{definition}
Let $K$ be a compact nc convex set and fix a distinguished point $z$. We let
\[
A(K,z) =
\{a\in A(K)\mid a(z)=0\}
\]
denote the operator system of affine nc functions which vanish at $z$. We say that the pair $(K,z)$ is a \textbf{pointed nc convex set} if the natural evaluation map
\begin{align*}
K &\to \mathcal{QS}(A(K,z)) \\
x&\mapsto (a\mapsto a(x))
\end{align*}
is surjective (and hence bijective).
\end{definition}

The following result from \cite[Theorem 4.9]{kennedy2023nonunital} provides a nonunital generalization of the duality between the category of unital operator systems and the category of compact nc convex sets. The main subtlety is that while the correspondence $S\mapsto (\mathcal{QS}(S),0)$ is a full and faithful functor, it is only essentially surjective onto the \emph{pointed} compact nc convex sets.

\begin{theorem}\label{thm:nonunital_nc_kadison}
The category of operator systems with ccp maps as morphisms is contravariantly equivalent to the category of pointed compact nc convex sets with pointed continuous affine nc functions as morphism. On objects, the essential inverse functors send an operator system $S$ to is pointed nc quasistate space $(\mathcal{QS}(S),0)$, and send a pointed compact nc convex set $K$ to the operator system $A(K,z)$ of pointed continuous affine nc functions on $(K,z)$.
\end{theorem}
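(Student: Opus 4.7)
The natural approach is to reduce the nonunital duality to the unital Kadison duality (Theorem \ref{thm:unital_nc_kadison}) via a unitization construction. The first step is to build, for each operator system $S$, a unital operator system $S^\#$ containing $S$ as a complete order embedding with the following universal property: every ccp map $\phi: S \to M_n$ extends uniquely to a ucp map $\tilde{\phi}: S^\# \to M_n$. One concrete construction is to choose a completely isometric complete order embedding $S \subseteq B(H)$ and set $S^\# := S + \C \cdot I_H \subseteq B(H)$; one then must verify that the resulting unital operator system structure is intrinsic to $S$, which is where Werner's framework for nonunital operator systems plays a crucial role.

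Applying Theorem \ref{thm:unital_nc_kadison} to $S^\#$ yields a unital complete order isomorphism $A(\calS(S^\#)) \cong S^\#$. The extension/restriction correspondence identifies $\calS(S^\#) \cong \mathcal{QS}(S)$ as compact nc convex sets, and the distinguished state $z: s + \lambda 1 \mapsto \lambda$ on $S^\#$ corresponds under this identification to the zero quasistate $0 \in \mathcal{QS}(S)_1$. Under the isomorphism $A(\calS(S^\#)) \cong S^\#$, the affine nc functions vanishing at $z$ correspond precisely to the elements of $S = \ker z \subseteq S^\#$, producing the desired natural complete order isomorphism $S \cong A(\mathcal{QS}(S), 0)$ of operator systems. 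In particular, $(\mathcal{QS}(S), 0)$ is always a pointed compact nc convex set.

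For the reverse direction, given a pointed compact nc convex set $(K, z)$, form the operator system $A(K, z)$ and consider the natural evaluation map $K \to \mathcal{QS}(A(K, z))$ defined by $x \mapsto (a \mapsto a(x))$. The ``pointed'' hypothesis is precisely surjectivity at each matrix level, and injectivity together with the homeomorphism property will follow from point-separation, which one obtains by applying the preceding paragraph to $S = A(K, z)$ and tracking the corresponding unitization $A(K, z)^\# \cong A(K)$. That the evaluation map is affine nc and sends $z$ to $0$ is immediate from the definitions. Functoriality on morphisms is then formal: a ccp map $\pi: S \to T$ induces the pointed continuous affine nc map $\rho \mapsto \rho \circ \pi$ on quasistate spaces, and a pointed continuous affine nc map $a: (K, z) \to (L, w)$ induces the ccp map $f \mapsto f \circ a$ on affine function spaces; the contravariant equivalence is then an application of Yoneda-type bookkeeping, the core content having been handled by the unital case.

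The main obstacle is the very first step: constructing $S^\#$ so that it is genuinely intrinsic to $S$ and so that restriction $\calS(S^\#) \to \mathcal{QS}(S)$ is a bijection. For a general (non-dualizable) operator system $S$, the inclusion $S \hookrightarrow S^\#$ need only be a complete order embedding, not a complete isometry, so one must be careful when specifying the matrix norms and cones on $S^\#$ to get the correct universal extension property; in particular, one must show that the extension $\tilde{\phi}$ is automatically completely positive whenever $\phi$ is completely contractive and completely positive. Once the unitization theory is in place, the remainder of the argument is essentially a formal transfer of the unital duality, together with the verification that the essential image of the quasistate functor is exactly the subcategory of \emph{pointed} compact nc convex sets.
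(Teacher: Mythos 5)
This statement is quoted in the paper from \cite{kennedy2023nonunital} (Theorem 4.9) and is not proved there, so there is no in-paper proof to compare against; your outline does, however, follow the same route as the cited reference: reduce to the unital duality of Theorem \ref{thm:unital_nc_kadison} via a partial unitization $S^\sharp$ with the universal extension property for ccp maps, identify $\calS(S^\sharp)$ with $\mathcal{QS}(S)$ and the augmentation state with $0$, and observe that the pointedness axiom is exactly what makes the quasistate functor essentially surjective. That part of your sketch is sound.

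The one genuinely problematic step is your concrete construction $S^\sharp := S + \C\cdot I_H \subseteq B(H)$. This is \emph{not} intrinsic to $S$, and no amount of verification will make it so: for example, embedding $S=\C$ into $M_2$ as $\lambda\mapsto \lambda p$ for a rank-one projection $p$ versus as $\lambda\mapsto \lambda\,\mathrm{diag}(1,\tfrac12)$ produces two copies of $S\oplus\C$ with genuinely different cones (the element $(2,-1)$ is positive in the second but not in the first). The correct construction, which is what Werner and the cited reference actually use, defines the matrix cones on $S^\sharp = S\oplus\C$ abstractly in terms of the quasistates of $S$ (roughly, $(x,\alpha)\ge 0$ iff $\phi_n(x)+\alpha\otimes 1_k\ge 0$ for every ccp $\phi:S\to M_k$), so that the universal property and the bijection $\calS(S^\sharp)\to\mathcal{QS}(S)$ hold by fiat; one then needs Werner's characterization of operator systems among matrix ordered operator spaces to know that $S\hookrightarrow S^\sharp$ is completely \emph{isometric} (not merely a complete order embedding), which is precisely what guarantees that the resulting isomorphism $S\cong A(\mathcal{QS}(S),0)$ recovers the original matrix norms. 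With that substitution your argument goes through; the remaining functoriality and essential-surjectivity claims are the formal bookkeeping you describe.
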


Again, on morphisms the essential inverse functors in Theorem \ref{thm:nonunital_nc_kadison} act in the natural way by precomposition on either affine nc functions or on nc quasistates.

\section{Quotients of matrix ordered spaces}\label{sec:quotients}

\subsection{Operator space quotients}

In this section, if $V$ is a normed vector space and $r>0$, then we let $B_r(V)$ denote the closed ball in $V$ with radius $r$ and center $0$.

Here, we recall the basic theory of quotients for operator spaces. If $E$ is an operator space, and $F\subseteq E$ is a closed subspace, then the quotient vector space $E/F$ is an operator space where the matrix norms isometrically identify $M_n(E/F)$ with the standard Banach space quotient $M_n(E)/M_n(F)$.

\begin{definition}\label{def:operator_space_quotient_map}
Let $\phi:E\to F$ be a completely bounded map between operator spaces $E$ and $F$. We will say that $\phi:E\to F$ is a \textbf{operator space quotient map with constant } $C>0$ if any of the following equivalent conditions hold
\begin{itemize}
\item [(1)] $B_1(M_n(F))\subseteq \overline{\phi_n(B_C(M_n(E)))}=C\overline{\phi_n(B_1(M_n(E)))}$ for all $n\in \N$.
\item [(2)] $B_1(M_n(F))\subseteq (C+\epsilon)\cdot \phi_n(B_1(M_n(E)))$ for all $n\in \N$ and every $\epsilon>0$.
\item [(3)] The induced map $\tilde{\phi}:E/\ker \phi \to F$ is an isomorphism and satisfies $\|\tilde{\phi}^{-1}\|_\text{cb} \le C$.
\end{itemize}
\end{definition}

The equivalence of (1) and (2) follows from a standard series argument using completeness of $E$. We will simply say \textbf{operator space quotient map} if we have no need to refer to $C$ explicitly.

The following fact is standard in operator space theory, but we provide a proof for completeness.

\begin{proposition}\label{prop:operator_space_quotient_dual}
Let $\phi:E\to F$ be a completely bounded map between operator spaces $E$ and $F$. The map $\phi$ is a quotient map with constant $C>0$ if and only if the dual map $\phi^\ast:F^\ast\to E^\ast$ is completely bounded below by $1/C$. Moreover, in this case, $\phi^\ast$ is weak-$\ast$ homeomorphic onto its range.
\end{proposition}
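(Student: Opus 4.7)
The plan is to prove both directions by reducing to the bijective case through the canonical factorization $\phi = \tilde\phi \circ q$, where $q\colon E \to E/\ker\phi$ is the canonical operator space quotient map and $\tilde\phi\colon E/\ker\phi \to F$ is the induced linear map.

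For the forward implication, identify $M_n(E^\ast) \cong \CB(E, M_n)$ and $M_n(F^\ast) \cong \CB(F, M_n)$ as in Example \ref{eg:dual_mos}; under these identifications, the $n$-th amplification of $\phi^\ast$ is precomposition with $\phi$. Given $f \in \CB(F, M_n)$ and any $y \in B_1(M_m(F))$, condition (2) of Definition \ref{def:operator_space_quotient_map} produces, for each $\varepsilon>0$, a lift $x \in M_m(E)$ with $\phi_m(x) = y$ and $\|x\|\le C+\varepsilon$. Then $\|f_m(y)\| = \|(\phi^\ast f)_m(x)\| \le (C+\varepsilon)\|\phi^\ast(f)\|_{\text{cb}}$; taking suprema over $m$ and $y$, then letting $\varepsilon\to 0$, yields $\|f\|_{\text{cb}} \le C\|\phi^\ast(f)\|_{\text{cb}}$.

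For the converse, I first observe that if $\phi^\ast$ is completely bounded below then its restriction at the first matrix level is bounded below as a Banach space map, so $\phi^\ast$ is injective with closed range; the classical closed range theorem then forces $\phi$ to be surjective. Consequently $\tilde\phi$ is a continuous linear bijection of Banach spaces, hence a Banach space isomorphism by the open mapping theorem. Factoring $\phi^\ast = q^\ast \circ \tilde\phi^\ast$ and using the standard fact that $q^\ast$ is a complete isometry onto the weak-$\ast$ closed annihilator $(\ker\phi)^\perp \subseteq E^\ast$, I can transfer the hypothesis to show $\|\tilde\phi^\ast(f)\|_{\text{cb}} \ge \|f\|_{\text{cb}}/C$ for all $f$. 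Since $\tilde\phi^\ast$ is bijective, $(\tilde\phi^\ast)^{-1} = (\tilde\phi^{-1})^\ast$ is completely bounded with CB norm at most $C$; applying the operator space duality identity $\|T\|_{\text{cb}} = \|T^\ast\|_{\text{cb}}$ then gives $\|\tilde\phi^{-1}\|_{\text{cb}} \le C$, which is condition (3) of Definition \ref{def:operator_space_quotient_map}.

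For the weak-$\ast$ homeomorphism claim I reuse the factorization: $q^\ast$ is a weak-$\ast$ homeomorphism onto the weak-$\ast$ closed subspace $(\ker\phi)^\perp$, and $\tilde\phi^\ast$ is a weak-$\ast$ homeomorphism since both it and its inverse $(\tilde\phi^{-1})^\ast$ are weak-$\ast$ continuous as adjoints of bounded linear maps between duals, so the composition $\phi^\ast$ is a weak-$\ast$ homeomorphism onto its range. There should be no real obstacle beyond organizing these standard operator space duality facts; the only functional-analytic input of any substance is the closed range theorem, used to upgrade density of the range of $\phi$ to full surjectivity so that $\tilde\phi$ is genuinely a bijection.
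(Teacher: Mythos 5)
Your forward implication is essentially the paper's argument (approximate or lift a unit vector $y\in B_1(M_m(F))$ by $\phi_m(x)$ with $\|x\|\le C+\varepsilon$ and estimate $\|f_m(y)\|$), so there is nothing to add there. Your converse, however, takes a genuinely different route. The paper argues by contraposition: if some $B_1(M_n(F))$ is not contained in $C\,\overline{\phi_n(B_1(M_n(E)))}$, the Effros--Winkler nc bipolar theorem produces a matrix functional $f$ of cb-norm at most $1$ with $\|\phi^\ast_m(f)\|_{\mathrm{cb}}<\|f\|_{\mathrm{cb}}/C$, directly violating the lower bound. You instead reduce to the bijective case: the first-level lower bound plus the classical closed range theorem gives surjectivity of $\phi$, the open mapping theorem makes $\tilde\phi$ a Banach space isomorphism, and then the factorization $\phi^\ast=q^\ast\circ\tilde\phi^\ast$ together with the facts that $q^\ast$ is a complete isometry onto $(\ker\phi)^\perp$ and that $\|T\|_{\mathrm{cb}}=\|T^\ast\|_{\mathrm{cb}}$ yields $\|\tilde\phi^{-1}\|_{\mathrm{cb}}\le C$, i.e.\ condition (3) of Definition \ref{def:operator_space_quotient_map}. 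This is correct: note that the complete isometry of $q^\ast$ is just the already-proved forward direction applied to $q$ (a quotient map with constant $1$), so there is no circularity. What each approach buys: the paper's separation argument stays entirely within the nc convexity toolkit it uses throughout and never needs surjectivity as an intermediate step, while yours is more elementary in its functional-analytic inputs, lands directly on the isomorphism form (3) with the sharp constant $C$, and gives the weak-$\ast$ homeomorphism claim for free from the same factorization. The one caveat is that the duality $(E/F)^\ast\cong F^\perp$ and the identity $\|T\|_{\mathrm{cb}}=\|T^\ast\|_{\mathrm{cb}}$ are themselves proved by Hahn--Banach/bipolar-type arguments, so the two proofs are less independent at the foundational level than they appear.
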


\begin{proof}
Suppose that $C\phi_n(B_1(M_n(E)))$ is dense in $B_1(M_n(F))$ for every $n$. Given $f\in M_m(F^\ast)\cong \CB(F,M_m)$, approximating unit vector $y\in B_1(M_n(F))$ with vectors of the form $\phi(x)$ for $x\in B_C(E)$ shows that $\|f\|_\text{cb}\le C\|\phi^\ast_m(f)\|_\text{cb}$.
\bigskip

Conversely, suppose that
\[
\coprod_{n\ge 1} B_1(M_n(F))\not\subseteq C\coprod_{n\ge 1}\phi_n(B_1(M_n(E))).
\]
By the Effros-Winkler nc Bipolar theorem \cite{effros1997matrix}, there are $m,n\ge 1$, an $x\in CB_1(M_n(E))$, and an $f\in M_m(F^\ast)\cong \CB(F,M_m)$, such that
\[
\Real f_k(y)\le 1_{mk}\quad\text{for all }k\ge 1, y\in B_1(M_k(F)),
\]
and yet $\Real f_n(x)\not\le 1_{mn}$. It follows that $\|f\|\le 1$, but $\|x\|\le C$ and $\|f_n(\phi_n(x))\|>1$, so $\|\phi^\ast_m(f)\|>\|f\|_\text{cb}/C$. This shows $\phi^\ast$ is not completely bounded below by $1/C$.
\bigskip

Finally, if $\phi$ is an operator space quotient map, it is bounded and surjective, and so its dual map $\phi^\ast$ is weak-$\ast$ homeomorphic onto its range.
\end{proof}


\subsection{Matrix ordered operator space quotients}

\begin{definition}
Let $X$ be a matrix ordered operator space. We call a closed subspace $J\subseteq X$ a \textbf{kernel} if it is the kernel of a ccp map $\phi:X\to Y$ for some matrix ordered operator space $Y$. In this case, we define an matrix ordered operator space structure on the operator space $X/J$ with involution
\[
(x+J)^\ast :=
x^\ast+J
\]
and matrix order
\[
M_n(X/J)^+:=
\overline{\{x+M_n(J)\mid x\in M_n(X)^+\}},
\]
where the closure is taken in the quotient norm topology on
\[
M_n(X/J)\cong M_n(X)/M_n(J).
\]
\end{definition}

\begin{proposition}
If $X$ is a matrix ordered operator space, and $J=\ker \phi$ is a kernel, then $X/J$ is a matrix ordered operator space.
\end{proposition}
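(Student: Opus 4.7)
The plan is to verify each axiom of Definition \ref{def:mos} for the structure placed on $X/J$. First, since any ccp map is positive and therefore self-adjoint, the kernel $J=\ker\phi$ is a closed self-adjoint subspace of $X$. So the quotient involution $(x+J)^\ast := x^\ast + J$ is well-defined, and since the involution on $M_n(X)$ is completely isometric and $M_n(J)$ is self-adjoint, the induced involution on $M_n(X/J)$ is again completely isometric. The underlying operator space $X/J$ is complete since $X$ is and $J$ is closed.

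Next I would check that each $M_n(X/J)^+$ is a self-adjoint, norm-closed, matrix convex cone. Closedness is built into the definition, and self-adjointness is immediate because positives in $M_n(X)$ are self-adjoint and passing to cosets and norm-limits preserves this. For matrix convexity, given elements of $M_n(X/J)^+$ realized as limits of cosets $x_k + M_n(J)$ with $x_k\in M_n(X)^+$, both direct sums $P_n \oplus P_m \subseteq P_{n+m}$ and scalar compressions $\alpha^\ast P_n\alpha \subseteq P_m$ for $\alpha\in M_{n,m}$ follow by pushing these approximations through the corresponding block-diagonal and compression operations in $M_\ast(X)$, which preserve the cone there and commute with passing to the quotient.

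The main obstacle is properness, namely $M_n(X/J)^+ \cap (-M_n(X/J)^+) = \{0\}$; everything else is essentially formal. Here I would exploit the very map $\phi$ whose kernel we are quotienting by. The universal property of the operator space quotient supplies an injective, completely bounded linear map $\tilde\phi: X/J \to Y$ determined by $\tilde\phi(x+J) = \phi(x)$. Because $\phi$ is ccp, $\tilde\phi_n$ carries each coset $x+M_n(J)$ with $x\in M_n(X)^+$ into $M_n(Y)^+$; continuity of $\tilde\phi_n$ together with norm-closedness of $M_n(Y)^+$ then yields $\tilde\phi_n(M_n(X/J)^+) \subseteq M_n(Y)^+$. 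If $y \in M_n(X/J)^+ \cap (-M_n(X/J)^+)$, applying $\tilde\phi_n$ gives $\tilde\phi_n(y) \in M_n(Y)^+ \cap (-M_n(Y)^+) = \{0\}$, since $Y$ is matrix ordered. Injectivity of $\tilde\phi$ then forces $y=0$, completing the verification.
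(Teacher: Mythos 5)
Your proof is correct and follows essentially the same route as the paper: both reduce properness of the quotient cones to properness of the cones in $Y$ by pushing forward along $\phi$ and using that $\ker\phi_n = M_n(J)$. The only difference is packaging — you factor through the injective induced map $\tilde\phi$ and invoke continuity plus closedness of $M_n(Y)^+$, where the paper runs the same argument directly via an explicit $\epsilon$-approximation of cosets by positives.
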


\begin{proof}
Since the involution on $X$ is completely isometric and $J$ is selfadjoint, it follows that the involution on $M_n(X/J)$ is completely isometric. It is straightforward to check that $X/J$ is a matrix ordered operator space. To prove that it is a matrix ordered operator space, suppose $x+J\in M_n(X/J)^+\cap (-M_n(X/J)^+)$. Then for any $\epsilon$, there are $y,z\in M_n(X)^+$ with $\|x-y+M_n(J)\|,\|x+z+M_n(J)\|<\epsilon$. Hence
\[
\|\phi_n(x)-\phi_n(y)\|\le \|x-y+M_n(J)\|< \epsilon
\]
and similarly $\|\phi_n(x)+\phi_n(z)\|<\epsilon$. Since $\phi$ is cp, $\phi_n(y),\phi_n(z)\ge 0$. As $\epsilon$ is arbitrary and $Y$ is a matrix ordered operator space, this shows
\[
\phi_n(x) \in
\overline{M_n(Y)^+}\cap (-\overline{M_n(Y)^+}) =
M_n(Y)^+\cap (-M_n(Y)^+)=
\{0\}.
\]
Therefore $x\in M_n(\ker \phi)=M_n(J)$, and so $x+M_n(J)=0$. This shows 
\[
M_n(X/J)^+\cap (-M_n(X/J)^+)=\{0\},
\]
so $X/J$ is a matrix ordered operator space.
\end{proof}

One can form a category of matrix ordered operator spaces with morphisms as either completely contractive and positive (ccp) or completely bounded and positive (cbp) maps.

\begin{definition}\label{def:MOS_quotient}
Let $X$ and $Y$ be matrix ordered operator spaces, and let $\phi:X\to Y$ be a cbp map. We say that $\phi$ is a \textbf{matrix ordered operator space quotient map with constant} $C>0$ if  for all $n\in \N$ we have both
\begin{itemize}
\item [(1)] $B_1(M_n(Y))\subseteq C\overline{\phi_n(B_1(M_n(X)))}$, and
\item [(2)] $M_n(Y)^+=\overline{\phi_n(M_n(X))^+}.$
\end{itemize}
For brevity, we will usually simply refer to $\phi$ as a \textbf{quotient map}, whenever it is clear that we are speaking only in the context of matrix ordered operator spaces.
\end{definition}

That is, a matrix ordered operator space quotient map is just an operator space quotient map that maps the positives (densely) onto the positives at each matrix level. Comparing to Definition \ref{def:operator_space_quotient_map}.(2), a quotient map is surjective. Each map $\phi_n:M_n(X)\to M_n(Y)$ is therefore open and closed, and since the positive cones $M_n(X)^+$ and $M_n(Y)^+$ are norm-closed, it follows that $\phi_n(M_n(X)^+)$ is closed and $\phi_n(M_n(X)^+)=M_n(Y)^+$ for all $n$. That is, the closure in condition (2) is redundant. The first thing to show is that such maps are in fact categorical quotients in the category of matrix ordered operator spaces.

\begin{proposition}\label{prop:cbp_quotient}
Let $\phi:X\to Y$ be a cbp map between matrix ordered operator spaces. The following are equivalent.
\begin{itemize}
\item [(1)] The map $\phi$ is a quotient map with constant $C>0$.
\item [(2)] The dual map $\phi^\ast:Y^\ast \to X^\ast$ is completely bounded below by $1/C$ and a complete order injection.
\item [(3)] With $J=\ker \phi$, the induced map $\tilde{\phi}:X/J \to Y$ such that
\[\begin{tikzcd}
	X \arrow[r,"\phi"] \arrow[d,"q"] &  Y \\
	X/J \arrow[ur,dashed,swap,"\tilde{\phi}"]
\end{tikzcd}\]
commutes is an isomorphism with cbp inverse satisfying $\|\tilde{\phi}^{-1}\|_\text{cb}\le C$.

\item [(4)] For every matrix ordered operator space $Z$ and cbp map $\psi:X\to Z$ with $\ker\phi\subseteq \ker \psi$, there is a unique cbp map $\tilde{\psi}:Y\to Z$ making the diagram
\[\begin{tikzcd}
	X \arrow[r,"\psi"] \arrow[d,"\phi"] &  Z \\
	Y \arrow[ur,dashed,swap,"\tilde{\psi}"]
\end{tikzcd}\]
commute, with $\|\tilde{\psi}\|_\text{cb}\le C\|\psi\|_\text{cb}$.
\end{itemize}
In this case, $\phi^\ast$ is weak-$\ast$ homeomorphic onto its range.
\end{proposition}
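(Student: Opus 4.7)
The plan is to establish the four equivalences via the cycle $(1) \Leftrightarrow (3) \Leftrightarrow (4)$ together with $(1) \Leftrightarrow (2)$, and then read off the weak-$\ast$ homeomorphism statement from Proposition \ref{prop:operator_space_quotient_dual}. For $(1) \Leftrightarrow (3)$, let $J = \ker\phi$. Under either hypothesis $\phi$ is surjective, so the first isomorphism theorem produces a cbp linear bijection $\tilde{\phi}\colon X/J \to Y$. Assuming (1), unraveling the quotient norm gives $\|\tilde{\phi}^{-1}\|_{\text{cb}} \le C$, and the definition $M_n(X/J)^+ = \overline{\{x+M_n(J): x\in M_n(X)^+\}}$ combined with the density of positives in Definition \ref{def:MOS_quotient}(2) forces $\tilde{\phi}^{-1}$ to be cp. The converse unpacks (3) and is immediate.

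For $(1) \Rightarrow (4)$, any cbp $\psi \colon X \to Z$ with $\ker\phi \subseteq \ker\psi$ descends through $q$ to a cbp map $\bar{\psi}\colon X/J \to Z$ with $\|\bar{\psi}\|_{\text{cb}} \le \|\psi\|_{\text{cb}}$, and then $\tilde{\psi} := \bar{\psi} \circ \tilde{\phi}^{-1}$ satisfies the required norm bound, with uniqueness coming from surjectivity of $\phi$. For $(4) \Rightarrow (3)$, apply (4) with $Z = X/J$ and $\psi = q$ to obtain a cbp map $\tilde{q}\colon Y \to X/J$ satisfying $\tilde{q} \circ \phi = q$ and $\|\tilde{q}\|_{\text{cb}} \le C$; surjectivity of $q$ forces $\tilde{q}\tilde{\phi} = \mathrm{id}_{X/J}$, so $\tilde{\phi}$ is bounded below by $1/C$ at every matrix level. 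To promote $\tilde{\phi}$ to a bijection, apply (4) once more with $Z = Y/\overline{\phi(X)}$ and $\psi = 0$: both the natural quotient $Y \to Z$ and the zero map factor $0$ through $\phi$, so the uniqueness clause in (4) forces them to coincide, yielding $\overline{\phi(X)} = Y$. A bounded-below map with dense range is a bijection, so $\tilde{q} = \tilde{\phi}^{-1}$.

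For $(1) \Leftrightarrow (2)$, the norm condition in Definition \ref{def:MOS_quotient}(1) is equivalent to $\phi^\ast$ being completely bounded below by $1/C$ by Proposition \ref{prop:operator_space_quotient_dual}. Using the identifications $M_n(Y^\ast) \cong \text{CB}(Y, M_n)$ and $M_n(Y^\ast)^+ \cong \text{CP}(Y, M_n)$ from Example \ref{eg:dual_mos}, it remains to verify that the density condition in Definition \ref{def:MOS_quotient}(2) is equivalent to $\phi^\ast$ being a complete order injection. The easy direction is a routine approximation: if $f\circ\phi$ is cp and $y \in M_k(Y)^+$ is approximated by $\phi_k(x_i)$ with $x_i \in M_k(X)^+$, then continuity of $f_k$ gives $f_k(y) = \lim f_k(\phi_k(x_i)) \ge 0$. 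The converse is the main technical obstacle: given $y_0 \in M_n(Y)^+$ outside $\overline{\phi_n(M_n(X)^+)}$, one must exhibit a cb map $f\colon Y \to M_m$ with $f \circ \phi$ cp but $f$ not cp, which will be achieved by separating $y_0$ from the closed matrix convex cone $\coprod_k \overline{\phi_k(M_k(X)^+)} \subseteq \calM(Y)$ via a matrix Hahn-Banach / Effros-Winkler bipolar argument. The weak-$\ast$ homeomorphism claim is then automatic from Proposition \ref{prop:operator_space_quotient_dual}.
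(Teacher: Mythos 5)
Your proposal follows essentially the same route as the paper: Proposition \ref{prop:operator_space_quotient_dual} for the norm half of $(1)\Leftrightarrow(2)$, an approximation argument for the easy order direction and an Effros--Winkler separation for the hard one, a direct unpacking of the quotient structure for $(1)\Leftrightarrow(3)$, and the universal property of $q:X\to X/J$ for $(3)\Leftrightarrow(4)$. (Your separation setup for $(1)\Leftrightarrow(2)$ --- separating a point of $M_n(Y)^+\setminus\overline{\phi_n(M_n(X)^+)}$ from the closed matrix convex cone $\coprod_k\overline{\phi_k(M_k(X)^+)}$ --- is in fact the correct orientation; the paper's printed version of this step has the two sets garbled, but the intended argument is the one you describe.)

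The one step that does not go through as written is your surjectivity argument in $(4)\Rightarrow(3)$. You apply (4) with $Z=Y/\overline{\phi(X)}$ and $\psi=0$, but (4) quantifies over matrix ordered operator spaces $Z$ and requires the competing map $Y\to Z$ to be \emph{cbp} before uniqueness can be invoked. The operator space quotient $Y/\overline{\phi(X)}$ carries no canonical matrix ordering here: $\overline{\phi(X)}$ need not be a kernel in the sense of Section \ref{sec:quotients}, the pushed-forward cones need not be proper, and with the zero cone the projection $\pi:Y\to Y/\overline{\phi(X)}$ is completely positive only if it annihilates $M_n(Y)^+$, which is not automatic. The repair stays entirely within your framework and is what the paper's ``comparing diagrams'' step amounts to: apply the uniqueness clause of (4) with $Z=Y$ and $\psi=\phi$ itself. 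Both $\id_Y$ and $\tilde\phi\circ\tilde q$ are cbp and satisfy $\tilde\psi\circ\phi=\phi$, so uniqueness gives $\tilde\phi\circ\tilde q=\id_Y$, and together with $\tilde q\circ\tilde\phi=\id_{X/J}$ (which you already obtained from surjectivity of $q$) this makes $\tilde\phi$ an isomorphism with $\tilde\phi^{-1}=\tilde q$; no separate density-of-range argument is needed.
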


\begin{proof}
To prove (1) and (2) are equivalent, after invoking Proposition \ref{prop:operator_space_quotient_dual}, it suffices to show that $\phi^\ast$ is a complete order injection if and only if Condition (2) in Definition \ref{def:MOS_quotient} holds. Note that because $\phi$ is completely positive, so is $\phi^\ast$. Suppose $\phi_n(M_n(X)^+)$ is dense in $M_n(Y)^+$ for every $n\ge 0$. Let $f\in M_m(Y^\ast)$ with $\phi_m^\ast(f)\ge 0$. Given $n\ge 1$ and $y\in M_n(Y)^+$, approximating $y$ with a net of points of the form $\phi_n(x_i)$ for $x_i\in M_n(X)^+$ shows that
\[
f_n(y)=
\lim_i f_n(\phi_n(x_i))=
\lim_i (\phi^\ast_m (f))_n(x_i)\ge 0.
\]
This shows $f\ge 0$.

Conversely, suppose that $\phi_n(M_n(X)^+)$ is not dense in $M_n(Y)^+$ for some $n\ge 1$. By the Effros-Winkler nc Bipolar Theorem \cite{effros1997matrix} applied to the closed nc convex sets
\[
\coprod_{k\ge 1} \overline{\phi_k(M_n(X)^+)} \quad\not\supseteq\quad \coprod_{k\ge 1} M_k(Y)^+,
\]
there is a selfadjoint matrix functional $f\in M_m(Y^\ast)^\sa$ such that $f_k(y)\ge -1_{mk}$ for every $k$ and every $y\in M_k(Y)^+$, but
\[
f_n(z)\not\ge -1_{mn}
\]
for some $z\in \overline{\phi_n(M_n(X)^+)}\setminus M_k(Y)^+$. A rescaling argument shows that $f\ge 0$ in $M_k(Y)$. However, approximating $x$ by points of the form $\phi_n(x)$, $x\in M_n(X)^+$ shows that $\phi^\ast_m(f)$ cannot be positive. Hence, $\phi^\ast$ is not a complete order isomorphism.
\bigskip

If $\phi$ is a quotient map with constant $C>0$, then it follows immediately from the definition of the matrix order and matrix norms on $X/J$ that $\tilde{\phi}:X/J\to Y$ is a complete order and norm isomorphism with $\|\tilde{\phi}^{-1}\|_\text{cb}\le C$. Conversely, note that by definition the quotient map $q:X\to X/J$ is a quotient map with constant $1$. Hence, if $\tilde{\phi}$ is a complete order isomorphism with $\|\tilde{\phi}^{-1}\|_\text{cb}\le C$, it follows that $\phi=\tilde{\phi}\circ q$ is a quotient map with constant $C$. This proves (1) and (3) are equivalent.
\bigskip

To show (3) and (4) are equivalent, it is enough to note that the quotient map $q:X\to X/J$ satisfies the universal property (4) with constant $C=1$. In detail, if (3) holds, composing the universal map from (4) applied to $q:X\to X/J$ with $\tilde{\phi}^{-1}$ shows that (4) holds for $\phi$ with constant $C$. Conversely, if (4) holds, then it holds for both $\phi$ and $q$, and there are induced maps $\tilde{\phi}:X/J\to Y$ and $\tilde{q}:Y\to X/J$ with $\|\tilde{q}\|\le \|\tilde{\phi}\|_\text{cb}$ and $\|\tilde{q}\|\le C\|q\|_\text{cb}=C$. Comparing diagrams shows $\tilde{q}=\tilde{\phi}^{-1}$, and $\tilde{\phi}$ is an isomorphism.
\end{proof}

Condition (4) in Proposition \ref{prop:cbp_quotient} shows that a matrix ordered operator space quotient map is a categorical quotient in the category of matrix ordered operator spaces with cbp maps as morphisms. Moreover, the norm bound shows that a quotient map with constant $C=1$ is a categorical quotient in the subcategory of matrix ordered operator spaces with ccp maps as morphisms.

\begin{remark}\label{rem:quotients_differ}
Every unital operator system is a matrix ordered operator space, and so if $\phi:S\to T$ is a ucp map between operator systems with $J=\ker S$, we may form the quotient matrix ordered operator space $S/\ker\phi$, but there is no a priori guarantee that this quotient is again an operator system. The matrix ordered operator space quotient is generally \emph{not} isomorphic to the unital operator system quotient defined by Kavruk, Paulsen, Todorov, and Tomforde \cite{kavruk2013quotients}. For example, they show in \cite[Example 4.4]{kavruk2013quotients} that the order norm on the unital operator system quotient need not be completely equivalent to the quotient operator space norm.
\end{remark}

\section{Extension property for compact nc convex sets}\label{sec:extension}

If $K=\coprod_n K_n$ is a compact nc convex set, we will define
\[
\Span_\R K:=
\coprod_{n\ge 1} \Span_\R K_n \subseteq \calM(E).
\]
The set $\Span_\R K$ is also nc convex, but need not be closed in $E$.

\begin{lemma}\label{lem:ncconv_span}
Let $K\subseteq \calM(E)$ be a compact nc convex set that contains $0$. Let $K-K$ denote the levelwise Minkowski difference of $K$ with itself. Then we have inclusions
\[
\frac{K-K}{2}\subseteq
\cncconv(K\cup (-K))\subseteq
K-K.
\]
Consequently, $\cncconv(K\cup (-K))\subseteq \Span_\R K$.
\end{lemma}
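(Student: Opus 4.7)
The plan is to prove both inclusions directly from the definitions, using the fact that $K - K$ is itself a compact nc convex set containing $K \cup (-K)$.

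For the first inclusion $(K-K)/2 \subseteq \cncconv(K \cup (-K))$, given $x, y \in K_n$, I would take the scalar matrices $\alpha_1 = \alpha_2 = \frac{1}{\sqrt{2}} 1_n \in M_{n,n}$. Then $\alpha_1^* \alpha_1 + \alpha_2^* \alpha_2 = 1_n$ and
\[
\alpha_1^* x \alpha_1 + \alpha_2^* (-y) \alpha_2 = \frac{x-y}{2},
\]
exhibiting $(x-y)/2$ as an (ordinary) nc convex combination of $x \in K$ and $-y \in -K$. Hence $(K-K)/2 \subseteq \ncconv(K \cup (-K)) \subseteq \cncconv(K \cup (-K))$.

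For the second inclusion $\cncconv(K \cup (-K)) \subseteq K - K$, I would verify that $K - K$ is a point-weak-$\ast$ closed nc convex set containing $K \cup (-K)$. The containment $K \cup (-K) \subseteq K - K$ uses $0 \in K$: for any $x \in K$ we have $x = x - 0$ and $-x = 0 - x$. NC convexity of $K - K$ follows by distributing an arbitrary nc convex combination:
\[
\sum_i \alpha_i^* (x_i - y_i) \alpha_i = \sum_i \alpha_i^* x_i \alpha_i - \sum_i \alpha_i^* y_i \alpha_i \in K_n - K_n = (K-K)_n,
\]
where both summands converge by the compactness (hence boundedness) of $K$ and lie in $K_n$ by nc convexity of $K$. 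Finally, each level $(K-K)_n = K_n - K_n$ is point-weak-$\ast$ compact as the image of the compact product $K_n \times K_n$ under the continuous subtraction map on $M_n(E)$, hence closed. The universal property of the closed nc convex hull then yields $\cncconv(K \cup (-K)) \subseteq K - K$.

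The final consequence $\cncconv(K \cup (-K)) \subseteq \Span_\R K$ is then immediate, since any difference $x - y \in K_n - K_n$ is a real linear combination of elements of $K_n$. No significant obstacles are expected; the arguments are formal, and the only subtle point is the verification that $K-K$ is point-weak-$\ast$ closed, which depends essentially on the compactness hypothesis on $K$. Without compactness, one would at best conclude $\cncconv(K \cup (-K)) \subseteq \overline{K - K}$.
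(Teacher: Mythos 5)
Your proof is correct, and it rests on the same three ingredients as the paper's: the hypothesis $0\in K$, nc convexity of $K$, and compactness of $K-K$. The organization of the second inclusion differs slightly. The paper argues element-by-element: it takes $z\in\ncconv(K\cup(-K))_n$, writes it as $\sum_i\alpha_i^\ast x_i\alpha_i-\sum_j\beta_j^\ast y_j\beta_j$ with $\sum_i\alpha_i^\ast\alpha_i+\sum_j\beta_j^\ast\beta_j=1_n$, and uses $0\in K$ to absorb each \emph{sub}-convex combination (coefficients summing to at most $1_n$) into $K$, so that $z\in K_n-K_n$; compactness of $K-K$ then handles the closure. You instead verify that $K-K$ is a point-weak-$\ast$ closed nc convex set containing $K\cup(-K)$ and invoke minimality of the closed nc convex hull, using $0\in K$ only for the containment $K\cup(-K)\subseteq K-K$. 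The two routes are essentially interchangeable in length and difficulty; yours has the small virtue of making explicit why $K_n-K_n$ is closed (continuous image of the compact set $K_n\times K_n$), which the paper compresses into ``since the latter is compact.'' The one point you should spell out is that when you split $\sum_i\alpha_i^\ast(x_i-y_i)\alpha_i$ into two sums, the separate point-weak-$\ast$ convergence of each requires the families $(x_i)$ and $(y_i)$ to be uniformly bounded across levels; this holds because compact nc convex sets are uniformly norm-bounded, which is what your parenthetical ``compactness (hence boundedness)'' is implicitly using.
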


\begin{proof}
It is immediate that $(K-K)/2\subseteq \ncconv(K\cup (-K))\subseteq \cncconv (K\cup (-K))$. Given $z\in \ncconv(K\cup (-K))_n$, we can write
\[
z=
\sum_i \alpha_i^\ast x_i\alpha_i -
\sum_j \beta_j^\ast y_j\beta_j
\]
for uniformly bounded families $\{x_i\},\{y_i\}$ in $K$ and matrix coefficients satisfying $\sum_i \alpha_i^\ast \alpha_i + \sum_j \beta_j^\ast\beta_j=1_n$. Since $0\in K$ and $\sum_i \alpha_i^\ast \alpha_i\le 1$, we have $x:=\sum_i\alpha_i^\ast x_i\alpha_i \in K_n$. Similarly $y:=\sum_j \beta_j^\ast y_j\beta_j\in K$, and so $z=x-y$ is in $(K-K)_n=K_n-K_n$. Therefore
\[
\ncconv(K\cup (-K))\subseteq K-K,
\]
and since the latter is compact, $\cncconv(K\cup (-K))\subseteq K-K$.
\end{proof}

When $0\in K$, by extending the inclusion map $K\subseteq \coprod_n M_n(A(K,0)^\ast)$ linearly at each level, we will think of elements in $(\Span_\R K)_n$ as nc functionals in 
\[
M_n(A(K,0)^\ast)=\text{CB}(A(K,0),M_n).
\]

\begin{proposition}\label{prop:span_dual}
Let $0\in K\subseteq \calM(E)$ be a compact nc convex set in a dual operator space $E=(E_\ast)^\ast$. For each $n\in \N$, the inclusion $K\to \text{QS}(A(K,0))$ extends uniquely to a well-defined affine nc isomorphism
\[
\eta:\coprod_{n\ge 1}\Span_\R K_n \to \coprod_{n\ge 1} M_n(A(K,0)^\ast)^\sa
\]
which is levelwise linear. The norm unit ball in $M_n(A(K,0)^\ast)^\sa$ is
\[
B_1(M_n(A(K,0)^\ast)) =
\text{CC}(A(K,0),M_n) =
\cncconv(\eta(K)\cup (-\eta(K)))_n,
\]
and for each $n$, $\eta$ is homeomorphic on $K_n-K_n$.
\end{proposition}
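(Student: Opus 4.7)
The plan is to build $\eta$ by first extending each continuous affine nc function $a \in A(K,0)$ to a real-linear map $\tilde a : \Span_\R K_n \to M_n$, and then setting $\eta(x)(a) := \tilde a(x)$. The key observation is that since $a|_{K_n}$ is affine with $a(0) = 0$, it is positively homogeneous: $a(\lambda x) = \lambda a(x)$ for $\lambda \in [0,1]$ and $x \in K_n$. This extends $a$ to $\R_+ K_n$ by homogeneity, and since $0 \in K_n$ together with convexity of $K_n$ gives $\Span_\R K_n = \R_+ K_n - \R_+ K_n$, we obtain $\tilde a$ on $\Span_\R K_n$ via $\tilde a(u - v) := a(u) - a(v)$. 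The standard averaging trick using the midpoints $\tfrac{1}{2}(u + v')$ and $\tfrac{1}{2}(u' + v)$ inside $K_n$ shows that this is well-defined, and uniqueness of the extension is clear from $A(K,0)$ separating points of $K$: by Theorem \ref{thm:unital_nc_kadison}, $A(K)$ separates points, and subtracting constants $a - a(0)$ yields elements of $A(K,0)$ that still separate.

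Next I would verify that $\eta$ has the advertised structural properties. For $x \in K_n$, the map $\eta(x) = \ev_x : A(K,0) \to M_n$ is a ccp map, hence lies in $M_n(A(K,0)^\ast)^+$, so by linearity $\eta(\Span_\R K_n) \subseteq M_n(A(K,0)^\ast)^\sa$. The map is levelwise linear by construction, and it respects nc convex combinations because each $\tilde a$ is linear and $a$ respects nc convex combinations on $K$; hence $\eta$ is affine nc. Injectivity follows from the separation noted above combined with linearity of $\tilde a$. Surjectivity onto $M_n(A(K,0)^\ast)^\sa$ relies on the fact that every self-adjoint cb map $\phi : A(K,0) \to M_n$ decomposes as a difference $\phi = s\psi_1 - t\psi_2$ of positive scalar multiples of ccp maps, each of which, under the pointed duality of Theorem \ref{thm:nonunital_nc_kadison}, corresponds to an element of $K_n$, so $\phi = \eta(s u - t v)$ for appropriate $u, v \in K_n$.

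For the norm unit ball identification I would invoke the Effros--Winkler nc bipolar theorem. The containment $\cncconv(\eta(K) \cup (-\eta(K)))_n \subseteq B_1(M_n(A(K,0)^\ast)^\sa)$ is immediate because $\eta(K_n)$ consists of ccp maps of cb-norm at most $1$, and $B_1$ is closed and nc convex. For the reverse, both sets have the same polar in $A(K,0)$, namely the matrix unit ball of $A(K,0)^\sa$: a self-adjoint $a \in A(K,0)$ is in this polar iff $-1_{mk} \leq \Real a_k|_{K_k} \leq 1_{mk}$ for all $k$, which by Theorem \ref{thm:unital_nc_kadison} is exactly the condition $\|a\|_{A(K,0)^\sa} \leq 1$. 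Taking bipolars recovers the equality.

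Finally, the homeomorphism claim on $K_n - K_n$ follows from point-weak-$\ast$ continuity: if $x_i \to x$ in $K_n$, then $a(x_i) \to a(x)$ for each $a \in A(K,0)$, so $\eta(x_i) \to \eta(x)$ in $M_n(A(K,0)^\ast) = \CB(A(K,0), M_n)$. Since $K_n - K_n$ is the continuous image of the compact product $K_n \times K_n$ under subtraction, it is compact, and the linear extension of $\eta$ remains continuous there. Being a continuous injection from a compact space to a Hausdorff one, $\eta|_{K_n - K_n}$ is a homeomorphism onto its image. The main obstacle in this proof is the surjectivity step of the second paragraph, which requires decomposing a general self-adjoint cb map as a difference of cp maps and invoking pointedness; the linear-extension construction, the bipolar identification of the unit ball, and the compactness argument for the homeomorphism are then essentially bookkeeping.
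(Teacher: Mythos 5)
Your construction of $\eta$, the bipolar identification of the unit ball, and the compactness argument for the homeomorphism on $K_n-K_n$ all match the paper's proof. The genuine problem is your surjectivity step. You decompose a self-adjoint completely bounded map $\phi:A(K,0)\to M_n$ as $s\psi_1-t\psi_2$ with $\psi_i$ ccp and then invoke ``the pointed duality'' to say each $\psi_i$ is evaluation at a point of $K_n$. But the proposition does \emph{not} assume $(K,0)$ is a pointed nc convex set --- the inclusion $K\to \mathrm{QS}(A(K,0))$ need not be surjective, so a ccp map on $A(K,0)$ need not come from a point of $K_n$. (The paper leans on this generality later: Proposition \ref{prop:restriction_norm_quotient} explicitly does not require pointedness, yet uses Proposition \ref{prop:span_dual}.) In addition, the decomposition of a self-adjoint cb map into a difference of cp maps is itself a nontrivial Wittstock-type theorem that you assert without justification, and for the nonunital system $A(K,0)$ it would need an extension-to-the-unitization argument.

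The fix is already contained in what you proved: once you know
\[
B_1\bigl(M_n(A(K,0)^\ast)^\sa\bigr)=\cncconv(\eta(K)\cup(-\eta(K)))_n ,
\]
Lemma \ref{lem:ncconv_span} gives $\cncconv(\eta(K)\cup(-\eta(K)))\subseteq \eta(K)-\eta(K)=\eta(K-K)$, so every element of the self-adjoint unit ball lies in $\eta(\Span_\R K_n)$, and scaling yields surjectivity with no appeal to pointedness or to any decomposition theorem. This is exactly how the paper concludes; you should replace your second-paragraph surjectivity argument with this one-line deduction and drop the claim that surjectivity is ``the main obstacle.''
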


\begin{proof}
Since $K_n$ is convex, we have $\Span_\R K_n=\{sx-ty\mid x,y\in K_n, s,t\ge 0\}$. Given $sx-ty\in \Span_\R K_n$, we define
\[
\eta(sx-ty)(a)=
sa(x)-ta(y)
\]
for $a\in A(K,0)$. Since such functions $a$ are affine and satisfy $a(0)=0$, it follows that $\eta\vert_{K_n}$ is well-defined and linear, and that $\eta$ is affine nc. Since $E_\ast$ contains a separating family of functionals, which restrict to affine nc functions in $A(K,0)$, the map $\eta$ is injective.

Next we will show the closed unit ball is
\[
B_1(M_n(A(K,0)^\ast)^\sa) =
\cncconv(\eta(K)\cup(-\eta(K)))_n
\]
for every $n$. That is, if $L$ is the compact nc convex set
\[
L=
\coprod_{n\ge 1} L_n=
\coprod_{n\ge 1} B_1(M_n(A(K,0)^\ast)^\sa),
\]
we want to show $L= \cncconv(\eta(K)\cup(-\eta(K)))$. Since $\eta(K)$ consists of nc quasistates on $A(K,0)$, it is clear that $L\supseteq \cncconv(\eta(K)\cup(-\eta(K)))$. To prove the reverse inclusion, by the nc Bipolar theorem of Effros and Winkler \cite{effros1997matrix}, it suffices to suppose that for some $n\in \N$ and $a\in M_n(A(K,0))^\sa$ that we have
\[
\phi_n(a)\le
1_k\otimes 1_n =
1_{kn}
\]
for all $k\in \N$ and all $\phi\in \cncconv(\eta(K)\cup(-\eta(K)))$, and then show that $\psi_n(a)\le 1_k\otimes1_n$ for all $k$ and all $\psi\in L_k$. Because $\cncconv(\eta(K)\cup(-\eta(K)))$ contains both $\eta(K)$ and $-\eta(K)$, we have
\[
-1_{kn}\le
a(x)\le
1_{kn}
\]
for all $k$ and all $x\in K_k$. Hence $\|a\|_{M_n(A(K,0))}\le 1$, and so $\psi_n(a)\le \|a\| 1_{kn}\le 1_{kn}$ for every $\psi \in L$. This proves $L=\cncconv(\eta(K)\cup (-\eta(K)))$, and consequently $\eta$ is also surjective. Since $\eta$ is homeomorphic on $K$ and $K-K$ is (levelwise) compact, it is easy to check that $\eta$ is continuous on each $K_n-K_n$. Being a continuous injection on a compact Hausdorff space, the map $\eta\vert_{K_n-K_n}$ is automatically a homeomorphism onto its range.
\end{proof}

Recall that the pair $(K,0)$ in Proposition \ref{prop:span_dual} is a \textbf{pointed nc convex set} exactly when we have
\[
\coprod_{n\ge 1}B_1(M_n(A(K,0)^\ast)^+) =
\text{QS}(A(K,0))=
\eta(K).
\]
In practice, we will often identify $M_n(A(K,0)^\ast)^\sa$ with $\Span_\R K_n$ and so omit the symbol $\eta$. Note that since $\eta$ is homeomorphic on $K-K\supseteq \cncconv(K\cup (-K))$ (Lemma \ref{lem:ncconv_span}), we are free to identify
\[
\cncconv(\eta(K)\cup(-\eta(K))) =
\eta(\cncconv(K\cup(-K))).
\]
That is, when we identify $M_n(A(K,0)^\ast)^\sa=\Span_\R K_n$, the unit ball of $M_n(A(K,0)^\ast)^\sa$ is $\cncconv(K\cup(-K))_n$.

For a closed convex set $X$ in a vector space $V$ containing $0$, we use the usual Minkowski functional
\[
\gamma_X(v):=
\inf\{t\ge 0 \mid v\in t X\},\qquad v\in V.
\] 
If $0\in K=\bigcup_n K_n$ is a compact nc convex set over a dual operator space $E$, we will use the shorthand
\[
\gamma_K(x)=
\gamma_{K_n}(x)
\]
when $x\in M_n(E)$.

\begin{definition}\label{def:width}\cite{taylor1972extension}
If $X$ is a closed convex set in some vector space $V$, then for $d\in V$, we define the \textbf{width of $V$ (with respect to $d$)} or the \textbf{$d$-width of $V$} as
\begin{align*}
|X|_d &:=
\sup \{t\ge 0\mid td\in X-X\} \\ &=
\frac{1}{\gamma_{X-X}(d)}.
\end{align*}
\end{definition}

\begin{definition}
If $K=\coprod_n K_n\subseteq \calM(E)$ is a closed nc convex set over a dual operator space $E$, then for any $n$ and any $d\in M_n(E)$ we define the \textbf{width}
\[
|K|_d:=
|K_n|_d =
\frac{1}{\gamma_{K-K}(d)},
\]
\end{definition}

\begin{lemma}\label{lem:width_dual_norm}
If $0\in K\subseteq \calM(E)$ is a compact nc convex set containing $0$, then for $d\in M_n(E)$, we have $|K|_d>0$ if and only if $d\in \Span_\R K$. Moreover, for $d\in \Span_\R K$, we have
\[
\frac{1}{|K|_d}\le\|\eta(d)\|_{M_n(A(K,0)^\ast)}\le
\frac{2}{|K|_d}.
\]
That is, $d\mapsto 1/|K|_d=1/|K_n|_d$ defines a norm on $\Span_\R K_n$ that is equivalent to the norm induced by the isomorphism $\eta:\Span_\R K_n\to M_n(A(K,0)^\ast)^\sa$.
\end{lemma}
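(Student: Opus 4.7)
The plan is to identify $\|\eta(d)\|$ with the Minkowski functional of $\cncconv(K\cup(-K))_n$ at $d$, and then use the inclusions supplied by Lemma \ref{lem:ncconv_span} to sandwich this Minkowski functional between $\gamma_{K-K}(d)$ and $2\gamma_{K-K}(d)$, which by definition of the width $|K|_d = 1/\gamma_{K-K}(d)$ are precisely $1/|K|_d$ and $2/|K|_d$.

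For the first step, I would invoke Proposition \ref{prop:span_dual}, which provides the affine nc isomorphism $\eta$ and asserts that the closed unit ball of $M_n(A(K,0)^\ast)^\sa$ equals $\cncconv(\eta(K) \cup (-\eta(K)))_n$. As noted immediately after that proposition, this coincides with $\eta(\cncconv(K\cup(-K))_n)$. Transporting along $\eta$ and using that this unit ball is convex, symmetric, and contains $0$, the norm on $M_n(A(K,0)^\ast)^\sa$ is the Minkowski functional of that set:
\[
\|\eta(d)\|_{M_n(A(K,0)^\ast)} = \gamma_{\cncconv(K\cup(-K))_n}(d).
\]

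For the second step, Lemma \ref{lem:ncconv_span} provides the inclusions $\tfrac{1}{2}(K-K) \subseteq \cncconv(K\cup(-K)) \subseteq K-K$ at each level. Since Minkowski functionals of convex sets containing $0$ reverse inclusions and satisfy $\gamma_{sX}(d) = \gamma_X(d)/s$ for $s > 0$, this gives
\[
\gamma_{K-K}(d) \;\le\; \gamma_{\cncconv(K\cup(-K))}(d) \;\le\; 2\gamma_{K-K}(d).
\]
Combined with the previous identification and $\gamma_{K-K}(d) = 1/|K|_d$, this yields the desired two-sided bound $1/|K|_d \le \|\eta(d)\| \le 2/|K|_d$.

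Finally, the initial equivalence $|K|_d > 0 \iff d \in \Span_\R K$ reduces to showing $\gamma_{K-K}(d) < \infty \iff d \in \R_+(K-K)$ (immediate from the definition) together with the identity $\Span_\R K_n = \R_+(K_n - K_n)$, which holds because $0 \in K_n$ and $K_n$ is convex: every real combination $sx - ty$ with $x,y \in K_n$ and $s,t \ge 0$ lies in $\max(s,t) \cdot (K_n - K_n)$. That $d \mapsto 1/|K|_d$ actually defines a norm on $\Span_\R K_n$, equivalent to the one pulled back via $\eta$, then follows from the two-sided inequality, since the right-hand side is visibly a norm. I don't foresee a substantial obstacle here; the lemma is essentially a dictionary translating Proposition \ref{prop:span_dual} into the geometric language of widths.
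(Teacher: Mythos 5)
Your proposal is correct and follows essentially the same route as the paper's proof: both identify $\|\eta(d)\|$ with the Minkowski functional of $\cncconv(K\cup(-K))_n$ via Proposition \ref{prop:span_dual} and then sandwich it between $\gamma_{K-K}(d)$ and $2\gamma_{K-K}(d)$ using the inclusions of Lemma \ref{lem:ncconv_span}. Your explicit treatment of the equivalence $|K|_d>0\iff d\in\Span_\R K$ (via $\Span_\R K_n=\R_+(K_n-K_n)$) is a small addition the paper leaves implicit, and it is fine.
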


\begin{proof}
By Lemma \ref{lem:ncconv_span}, we have inclusions
\[
\frac{K-K}{2}\subseteq
\cncconv(K\cup (-K))\subseteq
K-K.
\]
It follows that for $d\in \Span_\R K$, we have
\[
2\gamma_{K-K}(d)\ge
\gamma_{\cncconv(K\cup(-K))}(d)\ge
\gamma_{K-K}(d).
\]
By definition, $\gamma_{K-K}=1/|K|_d$. By Proposition \ref{prop:span_dual}, the norm unit ball of $M_n(A(K,0)^\ast)^\sa$ is 
\[
\cncconv(\eta(K)\cup(-\eta(K))) =
\eta(\cncconv(K\cup(-K))),
\]
and hence $\gamma_{\cncconv(K\cup(-K))}(d)=\gamma_{\cncconv(\eta(K)\cup(-\eta(K)))}(\eta(d))=\|\eta(d)\|$.
\end{proof}

Given compact nc convex sets $0\in L\subseteq K$. The restriction map $\rho:A(K,0)\to A(L,0)$ is always completely contractive and positive, and has dense range. When is this map an operator space quotient map? Equivalently, this means there is a constant $C>0$ so that any affine nc function $g\in M_n(A(L,0))$ extends to an affine nc function $f$ on all of $K$ with
\[
f\vert_L=g\quad\text{and}\quad
\|f\|_{M_n(A(K,0))}\le
C \|g\|_{M_n(A(L,0))}.
\]
Here is a noncommutative version of \cite[Theorem 1]{taylor1972extension}.

\begin{proposition}\label{prop:restriction_norm_quotient}
Let $0\in L\subseteq K\subseteq \calM(E)$ be compact nc convex sets containing $0$. The following are equivalent
\begin{itemize}
\item [(1)] The restriction map $A(K)\to A(L)$ is an operator space quotient map.
\item [(2)] The restriction map $\rho:A(K,0)\to A(L,0)$ is an operator space quotient map.
\item [(3)] The dual map $\rho^\ast:A(L,0)^\ast \to A(K,0)^\ast$ is completely bounded below.
\item [(4)] There is a constant $c>0$ such that for all $n\ge 1$ and all $d\in M_n(E)$ with $|L|_d> 0$, we have
\[
|L|_d\ge 
c|K|_d.
\]
\item [(5)] There is a constant $C>0$ such that
\[
(K-K)\cap \Span_\R L \subseteq C(L-L).
\]
\end{itemize}
\end{proposition}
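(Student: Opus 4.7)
The plan is to run the cycle of implications $(2) \Leftrightarrow (3)$, $(1) \Leftrightarrow (2)$, $(3) \Leftrightarrow (4)$, $(4) \Leftrightarrow (5)$, each using a different ingredient already in hand. The equivalence $(2) \Leftrightarrow (3)$ is immediate from Proposition \ref{prop:operator_space_quotient_dual} applied to the completely contractive map $\rho \colon A(K,0) \to A(L,0)$ (completely contractive because it is the restriction of the completely contractive restriction map $A(K) \to A(L)$).

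For $(1) \Leftrightarrow (2)$, I would exploit the direct-sum splittings $A(K) = \C \cdot 1_K \oplus A(K,0)$ and $A(L) = \C \cdot 1_L \oplus A(L,0)$ given by the completely contractive evaluation $a \mapsto a(0)$ and the complementary projection $a \mapsto a - a(0) \cdot 1_K$, both of which are completely bounded. The restriction map respects this decomposition, acting as the canonical isomorphism $1_K \mapsto 1_L$ on constants and as $\rho$ on the vanishing parts, so the quotient-map property transfers between the two maps up to an absolute constant. Concretely, $(1) \Rightarrow (2)$ is essentially automatic: given $g \in M_n(A(L,0))$, any bounded extension $f \in M_n(A(K))$ furnished by $(1)$ automatically satisfies $f(0) = g(0) = 0$ because $0 \in L$, so $f \in M_n(A(K,0))$. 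Conversely, $(2) \Rightarrow (1)$ follows by splitting $g \in M_n(A(L))$ as $g(0) \cdot 1_L + (g - g(0) \cdot 1_L)$ and lifting each summand separately.

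For $(3) \Leftrightarrow (4)$, Proposition \ref{prop:span_dual} provides levelwise linear isomorphisms $\eta_K \colon \Span_\R K_n \to M_n(A(K,0)^\ast)^\sa$ and $\eta_L \colon \Span_\R L_n \to M_n(A(L,0)^\ast)^\sa$. A direct computation on representative elements $sx - ty$ with $x, y \in L$ shows that $\rho^\ast(\eta_L(d)) = \eta_K(d)$, so under these identifications $\rho^\ast$ restricted to the self-adjoint part is precisely the inclusion $\Span_\R L \hookrightarrow \Span_\R K$. Lemma \ref{lem:width_dual_norm} then bounds both norms: $\|\eta_L(d)\|$ lies between $1/|L|_d$ and $2/|L|_d$, and similarly for $K$. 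Consequently $\rho^\ast$ is completely bounded below on self-adjoint parts if and only if $(4)$ holds, with constants differing by at most a factor of two. To upgrade to the full complex dual spaces, I would decompose $\phi = \real \phi + i\, \im \phi$ and use that the involution is completely isometric, which loses at most another factor of two in the cb-below constant.

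Finally, $(4) \Leftrightarrow (5)$ is a routine unwinding of the Minkowski-functional identity $|M|_d = 1/\gamma_{M-M}(d)$. Condition $(4)$ with $c = 1/C$ says exactly that $\gamma_{L-L}(d) \leq C \gamma_{K-K}(d)$ for every $d \in \Span_\R L$. For $(4) \Rightarrow (5)$, any $d \in (K-K) \cap \Span_\R L$ has $\gamma_{K-K}(d) \leq 1$, so $\gamma_{L-L}(d) \leq C$, and closedness of $L-L$ (compactness of $L$) gives $d \in C(L-L)$. For $(5) \Rightarrow (4)$, given $d \in \Span_\R L$ with $t := \gamma_{K-K}(d) > 0$, the rescaled point $d/t$ lies in $(K-K) \cap \Span_\R L$ and hence in $C(L-L)$ by $(5)$; the degenerate case $t = 0$ forces $d = 0$ because $K - K$ is bounded. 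I expect the main obstacle to be the book-keeping in $(3) \Leftrightarrow (4)$: verifying cleanly that $\rho^\ast$ corresponds to the inclusion on spans and tracking the factor-of-two losses from Lemma \ref{lem:width_dual_norm} together with the self-adjoint-to-complex reduction. Everything else is essentially formal.
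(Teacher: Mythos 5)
Your proposal is correct and follows essentially the same route as the paper: the splitting off of the constant part for $(1)\Leftrightarrow(2)$, the appeal to Proposition \ref{prop:operator_space_quotient_dual} for $(2)\Leftrightarrow(3)$, the identification of $\rho^\ast$ with the inclusion $\Span_\R L\hookrightarrow\Span_\R K$ via Proposition \ref{prop:span_dual} and Lemma \ref{lem:width_dual_norm} for $(3)\Leftrightarrow(4)$, and the Minkowski-gauge unwinding for $(4)\Leftrightarrow(5)$ all match the paper's argument. The only differences are cosmetic (a slightly different ordering of the implications and a few extra bookkeeping remarks, e.g.\ the degenerate case $\gamma_{K-K}(d)=0$), so no further changes are needed.
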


\begin{proof}
Clearly (1) implies (2). Suppose $\rho:A(K,0)\to A(L,0)$ is an operator space quotient map with constant $C\ge 0$. Given $a\in A(L)$, we have $a-a(0)\otimes 1_{A(L)}\in A(L,0)$. Thus there is a $b\in A(K,0)$ with $b\vert_L=a-a(0)\otimes 1_{A(L)}$ and $\|b\|\le C\|a-a(0)\otimes 1_{A(L)}\|\le 2C\|a\|$. Then, $b+a(0)\otimes 1_{A(K)}\in A(K)$ restricts to $a$ on $L$ and satisfies $\|b+a(0)\otimes 1_{A(K)}\|\le \|b\|+\|a\|\le (2C+1)\|a\|$. This proves $A(K)\to A(L)$ is an operator space quotient map with constant $2C+1$, so (2) implies (1).
\bigskip

The equivalence of (2) and (3) is Proposition \ref{prop:operator_space_quotient_dual}. To prove (3) is equivalent to (4), first note by taking real and imaginary parts that (3) occurs if and only if the restrictions $\rho^\ast_n:M_n(A(L,0)^\ast)^\sa \to M_n(A(K,0)^\ast)^\sa$ are bounded below by a universal constant. By Proposition \ref{prop:span_dual}, we may identify
\[
\Span_\R L_n = M_n(A(L,0)^\ast)^\sa \quad\text{and}\quad
\Span_\R K_n = M_n(A(K,0)^\ast)^\sa.
\]
With this identification, $\rho^\ast$ is just the inclusion map $\Span_\R L_n\to \Span_\R K_n$. By Lemma \ref{lem:width_dual_norm}, the induced norms on $\Span_\R L$ and $\Span_\R K$ are completely equivalent to $d\mapsto 1/|L|_d$ and $d\mapsto 1/|K|_d$. Thus the dual map $\rho^\ast$ is completely bounded below if and only if for some constant $c>0$, we have
\[
\frac{1}{c|K|_d}\le
\frac{1}{|L|_d}\iff
|L|_d\ge c|K|_d
\]
whenever $d \in \Span_\R L =\{d\in \calM(E)\mid |L|_d>0\}$, by Lemma \ref{lem:width_dual_norm}.
\bigskip

For $d\in \calM(E)$, recall that $|K|_d =\frac{1}{\gamma_{K-K}(d)}$ and $|L|_d=\frac{1}{\gamma_{L-L}(d)}$. Hence condition (3) holds if and only if
\[
\gamma_{L-L}\vert_{\Span_\R L} \le
\frac{1}{c}\gamma_{K-K}\vert_{\Span_\R L} =
\gamma_{c(K-K)}\vert_{\Span_\R L}.
\]
Using only the definition of the Minkowski gauges $\gamma_{K-K}$ and $\gamma_{L-L}$, this holds if and only if
\[
c(K-K)\cap \Span_\R L\subseteq L-L.
\]
Hence condition (4) holds with constant $c>0$ if and only if condition (5) holds with constant $C=1/c>0$.
\end{proof}

Note that for any general inclusion $L\subseteq K$ of compact nc convex sets, we can freely translate to assume $0\in L$ and apply Proposition \ref{prop:restriction_norm_quotient}. Thus conditions (1), (4), and (5) are equivalent in total generality. Note also that we do not require in \ref{prop:restriction_norm_quotient} that $(L,0)$ and $(K,0)$ are \emph{pointed} nc convex sets.

\begin{example}
It is possible that the restriction map $A(K,0)\to A(L,0)$ in Proposition \ref{prop:restriction_norm_quotient} is surjective but not an operator space quotient. For instance, let $E$ be an infinite dimensional Banach space. Let $\max(E)$ and $\min(E)$ denote $E$ equipped with its maximal and minimal operator space norms which restrict to the usual norm on $E$ \cite[Section 3.3]{effros2022theory}. There are standard operator space dualities $\max(E)^\ast\cong \min(E^\ast)$ and $\min(E)^\ast\cong \max(E^\ast)$. As $E$ is infinite dimensional, the maximal and minimal matrix norms on $E$ are not completely equivalent \cite[Theorem 14.3]{paulsen2002completely}. So, the identity map $\max(E)\to \min(E)$ is surjective and not an operator space quotient map. Consider the minimal and maximal nc unit balls
\[
K=\coprod_{n\ge 1} B_1(M_n(\min (E^\ast))) \quad\text{and}\quad
L=\coprod_{n\ge 1} B_1(M_n(\max (E^\ast)))
\]
in $\calM(E^\ast)$. By the dualities $\max(E)^\ast\cong \min(E^\ast)$ and $\min(E)^\ast\cong \max(E^\ast)$, we have
\[
A(K,0) \cong \max(E) \quad\text{and}\quad A(L,0)\cong \min(E)
\]
completely isometrically. The restriction map $A(K,0)\to A(L,0)$ is just the identity map $\max(E)\to \min(E)$, which is surjective, but not an operator space quotient map.
\end{example}

\begin{example}
Proposition \ref{prop:restriction_norm_quotient} provides a guarantee that every matrix-valued affine nc function on $L$ lifts to an affine nc function on $K$ with a complete norm bound. However, there is no guarantee that we can lift a \emph{positive} affine function to one that is positive. For instance, the restriction map of function systems
\[
A([-1,1],0)\to A([0,1],0)
\]
is an operator space quotient map with constant $c=1$, but does not map the positives onto the positives because $A([-1,1],0)^+=\{0\}$.
\end{example}

\begin{proposition}\label{prop:restriction_order_quotient}
Let $0\in L\subseteq K\subseteq \calM(E)$ be compact nc convex sets such that $(L,0)$ and $(K,0)$ are pointed compact nc convex sets. Let $\rho:A(K,0)\to A(L,0)$ be the restriction map. The following are equivalent
\begin{itemize}
\item [(1)] For all $n\ge 1$, $\overline{\rho_n(M_n(A(K,0)^+))}=M_n(A(L,0))^+$.
\item [(2)] The dual map $\rho^\ast:A(L,0)^\ast\to A(K,0)^\ast$ is a complete order embedding.
\item [(3)] $K\cap \Span_\R L\subseteq \R_+ L$.
\item [(4)] $K\cap \cncconv(L\cup (-L))=L$.
\end{itemize}
\end{proposition}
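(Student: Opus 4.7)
The plan is to adapt the strategy of Proposition \ref{prop:restriction_norm_quotient} to the matrix-order setting, swapping out norm-unit balls for positive cones and using pointedness to identify the positive cone of $M_n(A(L,0)^\ast)$ with $\R_+ L_n$ (and similarly for $K$).

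For the equivalence $(1) \iff (2)$, I would replay the Effros--Winkler nc Bipolar argument used in the proof of Proposition \ref{prop:cbp_quotient}. The implication $(1) \Rightarrow (2)$ is a limit argument: if $\rho_m^\ast(f) \ge 0$, then approximating each $y \in M_n(A(L,0))^+$ by $\rho_n(x_i)$ with $x_i \in M_n(A(K,0))^+$ forces $f_n(y) \ge 0$. For the converse, should density fail at some level $n$, applying the nc Bipolar theorem to the nc convex cones $\coprod_k \overline{\rho_k(M_k(A(K,0))^+)} \subseteq \coprod_k M_k(A(L,0))^+$ produces a selfadjoint $f \in M_m(A(L,0)^\ast)^\sa$ which, after a rescaling, is positive in $M_m(A(L,0)^\ast)$ while $\rho_m^\ast(f)$ is not, witnessing the failure of $\rho^\ast$ to be a complete order embedding.

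For $(2) \iff (3)$, I would use Proposition \ref{prop:span_dual} to identify $\Span_\R L_n = M_n(A(L,0)^\ast)^\sa$ and $\Span_\R K_n = M_n(A(K,0)^\ast)^\sa$, under which $\rho^\ast$ becomes the inclusion map. Pointedness of $(L,0)$ and $(K,0)$ identifies the positive cones with $\R_+ L_n$ and $\R_+ K_n$ respectively, since every completely positive map into $M_n$ is bounded and may be rescaled into the quasistate space. Thus $\rho^\ast$ is a complete order embedding precisely when $\R_+ K_n \cap \Span_\R L_n \subseteq \R_+ L_n$ for every $n$; writing any $\phi$ in this intersection as $\phi = rx$ with $r \ge 0$ and $x \in K_n$ shows that this is equivalent to (3).

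For $(3) \iff (4)$, I would again invoke Proposition \ref{prop:span_dual} together with Lemma \ref{lem:ncconv_span} to view $\cncconv(L \cup (-L))$ as the norm-unit ball of $\Span_\R L$, with $L$ as its positive part. Then $(3) \Rightarrow (4)$ is immediate: any $x \in K \cap \cncconv(L \cup (-L))$ lies in $K \cap \Span_\R L$, hence by (3) in $\R_+ L$; but $x$ has norm at most $1$, so $x \in L$. For $(4) \Rightarrow (3)$, I would rescale any $x \in K_n \cap \Span_\R L_n$ by $t = \max(1, \|x\|)$ so that $x/t \in \cncconv(L \cup (-L))$, and exploit nc convexity with $0 \in K$ to write $x/t = (1/t) x + (1 - 1/t) \cdot 0 \in K$, yielding $x/t \in K \cap \cncconv(L \cup (-L)) = L$ and thus $x \in \R_+ L$. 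The main delicate step throughout will be carefully tracking the identifications from Proposition \ref{prop:span_dual} combined with pointedness, in particular confirming that the positive cone of $M_n(A(L,0)^\ast)$ is the full cone $\R_+ L_n$ rather than merely the contractive positives $L_n$.
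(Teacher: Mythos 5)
Your proposal is correct and follows essentially the same route as the paper: the Effros--Winkler nc bipolar theorem for $(1)\iff(2)$, the identification of $M_n(A(L,0)^\ast)^{\mathrm{sa}}$ with $\operatorname{span}_{\mathbb{R}}L_n$ from Proposition \ref{prop:span_dual} together with pointedness (so the positive cone is $\mathbb{R}_+L_n$ and the contractive positives are exactly $L_n$) for $(2)\iff(3)$, and the same rescaling arguments via Lemma \ref{lem:ncconv_span} for $(3)\iff(4)$. The one delicate point you flag at the end is indeed exactly where pointedness is used, and you handle it correctly.
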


\begin{proof}
To prove $(1)\iff (2)$, consider the closed nc convex sets
\[
P=\coprod_{n\ge 1}M_n(A(L,0))^+ \quad\text{and}\quad
Q=\coprod_{n\ge 1} \overline{\rho_n(M_n(A(K,0))^+)}.
\]
By the nc Bipolar theorem of Effros and Winkler \cite{effros1997matrix}, we have $Q=P$ if and only if their nc polars $Q^\pi$ and $P^\pi$ are equal. Since each level $P_n$ is a convex cone, a standard scaling argument shows that
\begin{align*}
P^\pi &=
\{\phi \in M_k(A(L,0)^\ast)\mid k\in \N,\real \phi_n(b)\le 1_{nk} \text{ for all } n\ge 1, b\in P_n\} \\ &=
\{\phi \in M_k(A(L,0)^\ast) \mid k\in \N,\real \phi\le 0\}
\end{align*}
and similarly
\begin{align*}
Q^\pi &=
\{\phi \in M_k(A(L,0)^\ast) \mid k\in \N,\real \phi_n(\rho_n(a))\le 1_{nk} \text{ for all }n\ge 1, a\in M_n(A(K,0))^+\} \\ &=
\{\phi\in M_k(A(L,0)^\ast) \mid k\in \N,\real \rho^\ast_k(\phi)\le 0\}.
\end{align*}
Thus $P=Q$ if and only if $\rho^\ast$ is a complete order injection.
\bigskip

When we identify $A(K,0)^\ast=\Span_\R K_1$ and $A(L,0)^\ast=\Span_\R L_1$ as in Proposition \ref{prop:span_dual}, the dual map $\rho^\ast:\Span_\R L\to \Span_\R K$ is just the inclusion map. Since $(K,0)$ and $(L,0$) are pointed, the positive cones in $M_n(A(K,0)^\ast)=\Span_\R K_n$ and $M_n(A(L,0)^\ast) = \Span_\R L_n$ are just $\R_+ K_n$ and $\R_+ L_n$, respectively. Hence the inclusion map is a complete order injection if and only if we have
\[
\R_+ K\cap \Span_\R L = \R_+ L.
\]
A rescaling argument shows that this is equivalent to
\[
K\cap \Span_\R L \subseteq \R_+ L,
\]
and so (2) and (3) are equivalent.
\bigskip

If $K\cap \cncconv(L\cup (-L))=L$, then scaling gives
\[
\R_+(K\cap \Span_\R L) = 
\R_+ K\cap \Span_\R L = \R_+L,
\]
which is again equivalent to (3), so (4) implies (3). Now suppose that $K\cap \Span_\R L\subseteq \R_+ L$. Clearly $L\subseteq K\cap \cncconv(L\cup (-L))$. Conversely, if $x\in K\cap \cncconv(L\cup (-L))$, then by Lemma \ref{lem:ncconv_span}, we also have $x\in K\cap \Span_\R L=\R_+ L$. Hence
\[
x\in \cncconv(L\cup (-L))\cap \R_+L.
\]
Because $(L,0)$ is pointed, this implies $x\in L$, proving that (3) implies (4).
\end{proof}

Combining Propositions \ref{prop:restriction_norm_quotient} and \ref{prop:restriction_order_quotient} yields

\begin{theorem}\label{thm:dualizable_extrinsic}
Let $(L,0)$ and $(K,0)$ be pointed compact nc convex sets with $L\subseteq K\subseteq \calM(E)$. The following are equivalent.
\begin{itemize}
\item [(1)] The restriction map $A(K,0)\to A(L,0)$ is a matrix ordered operator space quotient map.
\item [(2)] There is a constant $C>0$ such that
\begin{itemize}
\item [(i)]  $(K-K)\cap \Span_\R L\subseteq C(L-L)$, and
\item [(ii)] $K\cap \Span_\R L\subseteq \R_+ L$.
\end{itemize}
\end{itemize}
\end{theorem}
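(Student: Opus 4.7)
The plan is to observe that this theorem is essentially a direct amalgamation of Propositions \ref{prop:restriction_norm_quotient} and \ref{prop:restriction_order_quotient}, bridged by the very definition of a matrix ordered operator space quotient map. Recall from Definition \ref{def:MOS_quotient} that a cbp map $\phi$ between matrix ordered operator spaces is a quotient map with constant $C>0$ precisely when (i) $\phi$ is an operator space quotient map with constant $C$, and (ii) $\phi$ maps the positive cones densely onto the positive cones at every matrix level. So, to prove the theorem, I need to verify that (2)(i) corresponds to the operator space quotient condition for $\rho:A(K,0) \to A(L,0)$, and that (2)(ii) corresponds to the density of $\rho_n$ on positives.

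For the implication (1)$\Rightarrow$(2), I would simply note that if $\rho$ is a matrix ordered operator space quotient map then in particular it is an operator space quotient map, so by Proposition \ref{prop:restriction_norm_quotient} (using the equivalence of (2) and (5) there), condition (2)(i) holds with some constant $C > 0$. Likewise, the density condition $\overline{\rho_n(M_n(A(K,0))^+)} = M_n(A(L,0))^+$ for all $n$ is precisely condition (1) of Proposition \ref{prop:restriction_order_quotient}, and by the equivalence of (1) and (3) there, this gives (2)(ii).

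For the reverse direction (2)$\Rightarrow$(1), I would run the same argument backwards: condition (2)(i) combined with Proposition \ref{prop:restriction_norm_quotient} gives that $\rho$ is an operator space quotient map with some constant $C'>0$, and condition (2)(ii) combined with Proposition \ref{prop:restriction_order_quotient} gives $\overline{\rho_n(M_n(A(K,0))^+)} = M_n(A(L,0))^+$ for every $n$. Together, these are exactly the two defining conditions of a matrix ordered operator space quotient map from Definition \ref{def:MOS_quotient}.

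Since the substantive content has already been established in the two preceding propositions, there is no real obstacle here; the main care needed is just tracking that both propositions apply under the stated hypotheses (both $(L,0)$ and $(K,0)$ are pointed, with $0 \in L \subseteq K$), which is built into the theorem statement. The proof is therefore very short — essentially a single paragraph observing that $(1) \iff (2)(i) \text{ and } (2)(ii)$ by definition of MOS quotient map, then quoting the two propositions for the equivalences.
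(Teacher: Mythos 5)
Your proposal is correct and is exactly the paper's approach: the paper proves this theorem with the single line ``Combining Propositions \ref{prop:restriction_norm_quotient} and \ref{prop:restriction_order_quotient} yields,'' and your write-up simply makes explicit the correspondence between the two clauses of Definition \ref{def:MOS_quotient} and conditions (2)(i) (via the equivalence of (2) and (5) in Proposition \ref{prop:restriction_norm_quotient}) and (2)(ii) (via the equivalence of (1) and (3) in Proposition \ref{prop:restriction_order_quotient}). The bookkeeping is accurate and the hypotheses (both sets pointed, $0\in L\subseteq K$) are correctly tracked.
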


%
%
%

\section{Dualizability via nc quasistate spaces}\label{sec:dualizability}

Recall that the trace class operators $\calT(H)=B(H)_\ast$ inherit a matrix ordered operator space structure via the embedding $\calT(H)=B(H)_\ast\subseteq B(H)^\ast$, where $B(H)\cong (B(H)_\ast)^\ast$ completely isometrically and order isomorphically. By Ng's \cite{ng2022dual} results, since $B(H)$ is a C*-algebra, $B(H)^\ast$ is an operator system, and so $\calT(H)=B(H)_\ast\subseteq B(H)^\ast$ is also an operator system. The nc quasistate space of $\calT(H)$ is the compact nc convex set
\[
\calP(H):=
\coprod_n M_n(B(H))_1^+ =
\coprod_n \{x\in M_n(B(H))\vert x\ge 0, \|x\|\le 1\}.
\]
Applying Theorem \ref{thm:dualizable_extrinsic} and Proposition \ref{prop:cbp_quotient} yields the following extrinsic geometric characterization of dualizability for an operator system.

\begin{corollary}\label{cor:dualizable_extrinsic}
Let $S$ be an operator system with pointed nc quasistate space $(K,0)$, and let $H$ be a Hilbert space. The following are equivalent.
\begin{itemize}
\item [(1)] There is a weak-$\ast$ homeorphic complete embedding $S^\ast \to B(H)$.
\item [(2)] There is a matrix ordered operator space quotient map $\calT(H)\to S$.
\item [(3)] There is a pointed continuous affine nc injection $\phi:(K,0)\to \calP(H)$ such that
\begin{itemize}
	\item [(i)] $(\calP(H)-\calP(H))\cap \Span_\R \phi(K) \subseteq C(\phi(K)-\phi(K))$ for some constant $C>0$, and
	\item [(ii)] $\calP(H)\cap \Span_\R \phi(K)\subseteq \R_+\phi(K)$.
\end{itemize}
\end{itemize}
\end{corollary}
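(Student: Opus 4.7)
The plan is to combine Proposition \ref{prop:cbp_quotient} (to obtain $(1) \iff (2)$) with the nonunital Kadison duality of Theorem \ref{thm:nonunital_nc_kadison} together with Theorem \ref{thm:dualizable_extrinsic} (to obtain $(2) \iff (3)$). Throughout, I use the identifications $\calT(H) = A(\calP(H), 0)$ and $S = A(K, 0)$ provided by Kadison duality.

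For $(1) \iff (2)$: Because $B(H) = \calT(H)^*$ and $S$ is the canonical predual of $S^*$, pre-adjunction gives a bijection between weak-$\ast$ continuous cbp maps $\psi : S^* \to B(H)$ and cbp maps $\psi_* : \calT(H) \to S$, under which $\psi = (\psi_*)^*$. By Proposition \ref{prop:cbp_quotient}, $\psi_*$ is a matrix ordered operator space quotient map if and only if $\psi$ is completely bounded below and a complete order injection; and in this case $\psi$ is automatically a weak-$\ast$ homeomorphism onto its range. This is exactly the weak-$\ast$ homeomorphic complete embedding condition in (1).

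For $(2) \iff (3)$: By Theorem \ref{thm:nonunital_nc_kadison}, cbp maps $\psi : \calT(H) \to S$ correspond bijectively to pointed continuous affine nc maps $\phi : (K, 0) \to (\calP(H), 0)$ via $\phi(\rho) = \rho \circ \psi$ and $\psi(a) = a \circ \phi$ (the latter using the identifications above). A MOS quotient $\psi$ has (matrix) dense range in $S$, which corresponds under this duality to injectivity of $\phi$. By compactness $\phi$ is then a homeomorphism onto its image $\phi(K) \subseteq \calP(H)$, and after identifying $K \cong \phi(K)$ and $S \cong A(\phi(K), 0)$, the map $\psi$ becomes the restriction map $A(\calP(H), 0) \to A(\phi(K), 0)$. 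Theorem \ref{thm:dualizable_extrinsic} applied to the inclusion $\phi(K) \subseteq \calP(H)$ now translates the MOS quotient property of this restriction map into precisely the pair of geometric conditions (i) and (ii) in (3).

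The main piece of bookkeeping is to match the various dualities carefully --- in particular, that a MOS quotient cbp map $\psi : \calT(H) \to S$ corresponds under Kadison duality to an injection $\phi : (K, 0) \to \calP(H)$, and that the map $\psi$ reconstructed from $\phi$ coincides with the restriction map to which Theorem \ref{thm:dualizable_extrinsic} applies. Both are routine consequences of the contravariant equivalence in Theorem \ref{thm:nonunital_nc_kadison} once the identifications $\calT(H) = A(\calP(H), 0)$ and $S = A(K, 0)$ are in place, so no genuinely new work is required beyond organizing these identifications.
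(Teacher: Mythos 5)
Your proposal is correct and follows exactly the route the paper intends: the paper gives no written proof beyond the sentence ``Applying Theorem \ref{thm:dualizable_extrinsic} and Proposition \ref{prop:cbp_quotient} yields the following,'' and your argument is precisely the fleshing-out of that plan, using Proposition \ref{prop:cbp_quotient} and pre-adjoints for $(1)\Leftrightarrow(2)$ and Kadison duality plus Theorem \ref{thm:dualizable_extrinsic} for $(2)\Leftrightarrow(3)$. The only detail you gloss over (as does the paper) is that a MOS quotient map $\calT(H)\to S$ is merely cbp rather than ccp, so one should rescale it to be completely contractive before its dual restricts to a map $K\to\calP(H)$; this is harmless since rescaling preserves the quotient property.
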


\begin{definition}\label{def:alpha_generated}
Let $E$ be an ordered $\ast$-Banach space with closed positive cone $E^+$. We say $E$ is \textbf{$\alpha$-positively generated} or simply \textbf{$\alpha$-generated} for a constant $\alpha>0$ if for each $x\in E^\sa$, we can write
\[
x=y-z
\]
for $y,z\in E^+$ satisfying $\|y\|+\|z\|\le \alpha\|x\|$. Or, equivalently,
\[
B_1(E)=\alpha\conv(B_1(E^+)\cup (-B_1(E^+))).
\]
If $X$ is a matrix ordered operator space, then we say $X$ is \textbf{completely $\alpha$-generated} if each matrix level $M_n(X)$ is $\alpha$-generated.
\end{definition}

In \cite[Theorem 3.9]{ng2022dual}, Ng proved that an operator system $S$ is dualizable if and only if it is completely $\alpha$-generated for some $\alpha>0$. The following definition is the dual property of $\alpha$-generation.

\begin{definition}\label{def:alpha_normal}
An ordered $\ast$-Banach space $E$ is \textbf{$\alpha$-normal} for some $\alpha>0$ if for all $x,y,z\in E^\sa$,
\begin{equation}\label{eq:alpha_normal}
x\le y \le z \implies
\|y\|\le \alpha \max\{\|x\|,\|z\|\}.
\end{equation}
If $X$ is a matrix ordered operator space, then $X$ is \textbf{completely $\alpha$-normal} if each matrix level $M_n(X)$ is $\alpha$-normal.
\end{definition}

The condition of $\alpha$-normality can be viewed as a strict requirement about how the norm and order structure on $E$ interact. Normality means that ``order bounds" $x\le y\le z$ should imply ``norm bounds" $\|y\|\le \alpha \max\{\|x\|,\|z\|\}$. If one does not care about the exact value of $\alpha$, it is enough to check the normality identity \eqref{eq:alpha_normal} on positive elements in the special case $x=0$.

\begin{proposition}\label{prop:alpha_normal_positive}
If $E$ is an ordered $\ast$-Banach space, then $E$ is $\alpha$-normal for some $\alpha>0$ if and only if there is a constant $\beta>0$ such that
\begin{equation}\label{eq:alpha_normal_positive}
0\le x\le y \implies
\|x\|\le\beta\|y\|
\end{equation}
for $x,y\in E^+$.
\end{proposition}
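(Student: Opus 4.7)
The plan is to prove the two directions separately, with the forward direction being immediate and the reverse direction requiring a short translation argument.

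For the forward implication, suppose $E$ is $\alpha$-normal. Setting $x=0$ in the definition shows that whenever $0\le y\le z$ in $E^+$, we get $\|y\|\le \alpha\max\{0,\|z\|\}=\alpha\|z\|$. So the positive-cone condition \eqref{eq:alpha_normal_positive} holds with $\beta=\alpha$.

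For the reverse implication, suppose the positive-cone condition holds with constant $\beta>0$, and let $x\le y\le z$ in $E^\sa$. The trick is to translate by $-x$ to land inside the positive cone. The inequalities $x\le y\le z$ are equivalent to $0\le y-x\le z-x$, and both $y-x$ and $z-x$ are positive. Applying the hypothesis gives
\[
\|y-x\|\le \beta\|z-x\|\le \beta(\|x\|+\|z\|)\le 2\beta\max\{\|x\|,\|z\|\},
\]
where the middle step uses the triangle inequality. Then another application of the triangle inequality yields
\[
\|y\|\le \|x\|+\|y-x\|\le \|x\|+2\beta\max\{\|x\|,\|z\|\}\le (1+2\beta)\max\{\|x\|,\|z\|\},
\]
so $E$ is $\alpha$-normal with $\alpha=1+2\beta$.

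There is no real obstacle here: the argument is essentially a two-line manipulation using the triangle inequality after translating to the positive cone. The only thing worth noting is that the constants change (one picks up an additive $1$ and a factor of $2$) in passing from the positive-cone condition to full normality, which is why the proposition is stated as an equivalence of properties rather than an identity of constants.
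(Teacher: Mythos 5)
Your proof is correct and follows essentially the same route as the paper: the forward direction by setting $x=0$, and the converse by translating to $0\le y-x\le z-x$ and applying the triangle inequality, arriving at the same constant $\alpha=2\beta+1$. Nothing to add.
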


\begin{proof}
If $E$ is $\alpha$-normal, then \eqref{eq:alpha_normal_positive} holds with $\beta=\alpha$. Conversely, suppose \eqref{eq:alpha_normal_positive} holds, and let $x\le y \le z$ in $E^\sa$. Then $0\le y-x\le z-x$, and so $\|y-x\|\le \beta\|z-x\|$. Then, we get the bound
\begin{align*}
\|y\| &\le
\|y-x\|+\|x\| \\&\le
\beta\|z-x\|+\|x\| \\ &\le
\beta(\|z\|+\|x\|)+\|x\| \\&\le 
(2\beta+1)\max\{\|x\|,\|z\|\},
\end{align*}
proving $E$ is $(2\beta+1)$-normal.
\end{proof}

\begin{proposition}\label{prop:alpha_generated_dual}
Let $X$ be a matrix ordered operator space, with dual matrix ordered operator space $X^\ast$, and let $\alpha>0$. If $X$ is completely $\alpha$-generated, then $X^\ast$ is completely $2\alpha$-normal. Conversely, if $X^\ast$ is completely $\alpha$-normal, then $X$ is completely $2\alpha$-generated.
\end{proposition}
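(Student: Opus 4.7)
The plan is to prove both directions by duality arguments that exploit the fact that the $C^\ast$-algebra $M_n$ is $1$-normal; the converse direction additionally relies on the Effros-Winkler nc bipolar theorem \cite{effros1997matrix}.

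For the forward direction, fix $f \le g \le h$ in $M_n(X^\ast)^\sa$ and aim to bound $\|g\|_{cb} = \sup_k \|g_k\|$. Given $x \in M_k(X)$ with $\|x\| \le 1$, I would decompose $x = \real x + i\im x$ into self-adjoint parts, each of norm at most $\|x\|$, and then invoke complete $\alpha$-generation of $M_k(X)$ to write $\real x = y_1 - z_1$ and $\im x = y_2 - z_2$ with $y_i, z_i \in M_k(X)^+$ and $\|y_i\| + \|z_i\| \le \alpha$. Complete positivity of $g - f$ and $h - g$ promotes the sandwich $f \le g \le h$ to the matrix-level sandwich $f_k(w) \le g_k(w) \le h_k(w)$ inside the $C^\ast$-algebra $M_k(M_n) \cong M_{kn}$ whenever $w \in M_k(X)^+$, and normality of $C^\ast$-algebras yields $\|g_k(w)\| \le \max\{\|f\|_{cb}, \|h\|_{cb}\}\|w\|$. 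Triangulating $g_k(x) = g_k(y_1) - g_k(z_1) + i\bigl(g_k(y_2) - g_k(z_2)\bigr)$ over the four positive pieces produces $\|g_k(x)\| \le 2\alpha \max\{\|f\|_{cb}, \|h\|_{cb}\}$, hence $\|g\|_{cb} \le 2\alpha \max\{\|f\|, \|h\|\}$.

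For the converse, I would argue by contradiction. Assume $X^\ast$ is completely $\alpha$-normal, take $x \in M_n(X)^\sa$ with $\|x\| < 1$, and suppose $x \notin 2\alpha K_n$, where
\[
K := \cncconv\bigl(B_1(X^+) \cup (-B_1(X^+))\bigr)
\]
and $B_1(X^+) := \coprod_k B_1(M_k(X)^+)$ is nc convex since $\sum_i \alpha_i^\ast w_i \alpha_i \le \sum_i \alpha_i^\ast \alpha_i = 1_n$ for positive contractions $w_i$. The Effros-Winkler nc bipolar theorem (applied inside $X^{\ast\ast}$ if $X$ itself is not a dual space) then yields $g \in M_m(X^\ast)^\sa$ satisfying $g_k(y) \le (2\alpha)^{-1} 1_{mk}$ for every $y \in K_k$ and every $k$, yet $g_n(x) \not\le 1_{mn}$. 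Since $K$ is symmetric under negation, this entails $-\tfrac{\|w\|}{2\alpha} 1_{mk} \le g_k(w) \le \tfrac{\|w\|}{2\alpha} 1_{mk}$ for every positive $w \in M_k(X)^+$. Rescaling by $\tilde g := 2\alpha g$ gives $\|\tilde g\|_{cb} > 2\alpha$ while $-\|w\| \cdot 1_{mk} \le \tilde g_k(w) \le \|w\| \cdot 1_{mk}$ for all positive $w$.

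The hard part is upgrading these pointwise bounds on the positive cone to a genuine matricial order sandwich. Specifically, I would produce a positive matrix functional $h \in M_m(X^\ast)^+$ with $-h \le \tilde g \le h$ and $\|h\|_{cb} \le 1$, via a second application of the nc bipolar theorem (recognizing this condition as polar-dual to the absolute-value bound on $\tilde g$), or equivalently a Wittstock-type positive dominator for the selfadjoint cb map $\tilde g$. Once such $h$ is in hand, complete $\alpha$-normality of $X^\ast$ applied to $-h \le \tilde g \le h$ forces $\|\tilde g\|_{cb} \le \alpha \|h\|_{cb} \le \alpha$, contradicting $\|\tilde g\|_{cb} > 2\alpha$. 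The absence of any lattice structure on matrix ordered operator spaces makes this dominator construction the most delicate step, and I expect it to be the principal technical obstacle.
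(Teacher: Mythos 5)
Your forward direction is essentially the paper's argument: use complete $\alpha$-generation to decompose the test element into positives, push the sandwich $f\le g\le h$ through the pairing into the $1$-normal C*-algebra $M_{kn}$, and collect the bound $2\alpha\max\{\|f\|,\|h\|\}$. (The paper tests against selfadjoint elements directly instead of splitting into real and imaginary parts, but the constant comes out the same.) The problem is the converse, where you have correctly located the crux and then left it open: everything hinges on showing that a selfadjoint $\tilde g\in M_m(X^\ast)^{\sa}$ satisfying $-\|w\|1\le \tilde g_k(w)\le \|w\|1$ for all $w\in M_k(X)^+$ is genuinely sandwiched $x\le \tilde g\le z$ with $\|x\|,\|z\|\le 1$ --- your ``positive dominator.'' Without that step the contradiction never materializes, and as you observe it is not a routine extension argument in the absence of lattice structure. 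As written, the proposal does not prove the converse.

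Your instinct that a second application of the bipolar theorem supplies the dominator is correct, and this is exactly how the paper closes the gap; you should carry it out rather than flag it. Set $K=B_1(\calM(X)^{\sa})$, $K^+=B_1(\calM(X)^+)$ and $L=\cncconv(K^+\cup(-K^+))$, and take selfadjoint polars in $\calM(X^\ast)$ (a dual space, so no detour through $X^{\ast\ast}$ is needed). Then $(K^+)^\rho=(K\cap \calM(X)^+)^\rho$ is the closed nc convex hull of $K^\rho\cup(-\calM(X^\ast)^+)$ by the bipolar theorem, and since $K^\rho$ is a bounded nc convex set containing $0$ and $-\calM(X^\ast)^+$ is a closed cone, this hull is the Minkowski sum $K^\rho-\calM(X^\ast)^+$, i.e.\ the set of functionals dominated above by an element of the dual unit ball. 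Hence $L^\rho=(K^+)^\rho\cap(-K^+)^\rho$ is precisely $\{g : x\le g\le z \text{ for some } x,z\in K^\rho\}$, complete $\alpha$-normality gives $L^\rho\subseteq \alpha K^\rho$ directly, and the bipolar theorem yields $K\subseteq \alpha L\subseteq \alpha(K^+-K^+)$ --- no contradiction argument and no separate dominator lemma. Finally, note a constant slip: separating from $2\alpha K_n$ only shows $B_1(M_n(X)^{\sa})\subseteq 2\alpha L_n$, which bounds each of $\|y\|$ and $\|z\|$ by $2\alpha\|x\|$ and hence gives $\|y\|+\|z\|\le 4\alpha\|x\|$, i.e.\ $4\alpha$-generation; since your contradiction only requires $\|\tilde g\|_{\mathrm{cb}}>\alpha$, you should separate from $\alpha K_n$ to obtain the stated $2\alpha$.
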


\begin{proof}
Suppose that $X$ is completely $\alpha$-generated. Let $k\in \N$ and suppose $x,y,z\in M_k(X^\ast)^\sa$ satisfy $x\le y \le z$ in the dual matrix ordering on $X^\ast$. By definition of the dual norm, we have
\[
\|y\|_{M_k(X^\ast)} =
\sup\{\|\llangle a,x\rrangle\|\,\mid\, n\ge 1, a\in M_n(X)^\sa\},
\]
where $\llangle\cdot,\cdot\rrangle$ denotes the matrix pairing between $\calM(X)$ and $\calM(X^\ast)$ defined by
\[
M_m(X) \times M_n(X^\ast) \to M_{m \times n} : (a,x) \to \llangle a, x \rrangle = [ x_{k,l}(a_{i,j}) ].
\]

Given $n\in \N$ and $a\in M_n(X)^\sa$, we can write $a=b-c$ where $b,c\in M_n(X)^+$ satisfy $\|b\|+\|c\|\le \alpha \|a\|$. Then, we have the operator inequality
\begin{align*}
\llangle a,y\rrangle  &=
\llangle b,y\rrangle - \llangle c,y\rrangle \\ &\le
\llangle b,z\rrangle - \llangle c,x\rrangle \\ &\le
(\|z\|\|b\|+\|x\|\|c\|)1_{nk} \\ &\le
(\|x\|+\|z\|)\alpha \|a\|1_{nk}.
\end{align*}
Symmetrically,
\begin{align*}
\llangle a,y\rrangle &\ge
\llangle b,x\rrangle - \llangle c,z\rrangle \\&\ge 
-(\|x\|\|b\|+\|z\|\|c\|)1_{nk}\\ &\ge
-(\|x\|+\|z\|)\alpha \|a\|1_{nk}.
\end{align*}
It follows that
\[
\|\llangle a,y\rrangle \|\le
(\|x\|+\|z\|)\alpha \|a\|.
\]
Since $a$ was arbitrary, this shows $\|y\|\le \alpha (\|x\|+\|z\|)\le 2\alpha \max\{\|x\|,\|z\|\}$, proving $X^\ast$ is completely $2\alpha$-normal.
\bigskip

Now suppose $X^\ast$ is completely $2\alpha$-normal. Consider the closed matrix convex subsets
\begin{align*}
K &:= 
\coprod_{n\ge 1} B_1(M_n(X)^\sa)=B_1(\calM(X)^\sa), \\
K^+ &:=
\coprod_{n\ge 1} B_1(M_n(X)^+) =
K\cap \calM(X)^+, \\ 
L &:=
\cncconv \left(K^+ \cup (-K^+)\right)
\end{align*}
of $\calM(X)$. We will show that $K\subseteq \alpha L$.

To prove $K\subseteq \alpha L$, by the selfadjoint version of the nc separation Theorem of Effros and Winkler \cite[Theorem 2.4.1]{davidson2019noncommutative}, it suffices to show that the selfadjoint nc polars 
\[
K^\rho:=
\coprod_{n\ge 1} \{x\in M_n(X)^\sa \mid \llangle a,x\rrangle \le 1_{nk} \text{ for all }k\ge 1,x\in K_k\}
\]
 and $L^\rho$ (defined similarly) satisfy $L^\rho \subseteq \alpha K^\rho$. The relevant selfadjoint polars are
\begin{align*}
K^\rho &=
\coprod_{k\ge 1} B_1(M_k(X^\ast)), \\
(K^+)^\rho &=
K^\rho - \calM(X^\ast)^+ \\ &=
\coprod_{k\ge 1} \{x\in M_k(X^\ast)^\sa\mid x\le y \text{ for some }y\in K^\rho\},\quad \text{ and } \\
L^\rho &=
(K^+)^\rho \cap (-K^+)^\rho \\ &=
(K^\rho - \calM(X^\ast)^+) \cap (K^\rho + \calM(X^\ast)^+) \\ &=
\coprod_{k\ge 1} \{y\in M_k(X^\ast)^\sa \mid x\le y \le z \text{ for some }x,z\in K^\rho\}.
\end{align*}
Hence, if $y\in L^\rho_k$, then $y$ satisfies $x\le y \le z$ for some $x,z\in M_k(X^\ast)^+$ with $\|x\|,\|z\|\le 1$. By complete $\alpha$-normality, this implies $\|y\|\le \alpha$, so $y\in \alpha K^\rho$. This proves $L^\rho \subseteq \alpha K^\rho$, so $K\subseteq \alpha L$.

Hence $K\subseteq \alpha L=\alpha \cncconv(K^+\cup (-K^+))$. Using Lemma \ref{lem:ncconv_span}, we have
\[
\cncconv(K^+\cup (-K^+)) \subseteq
K^+-K^+.
\]
Hence $K\subseteq \alpha (K^+-K^+)$, and by rescaling every element $x\in \calM(X)^\sa$ can be decomposed as $x=y-z$ with $y,z\ge 0$ and $\|y\|,\|z\|\le \alpha\|x\|$, and so $\|y\|+\|z\|\le 2\alpha \|x\|$. This shows that $X$ is completely $2\alpha$-generated.
\end{proof}

\begin{remark}\label{rem:operator_space_normal}
If $H$ is a Hilbert space, then $B(H)$ is completely $1$-normal. Consequently, if $X$ is a matrix ordered operator space which is completely norm and order isomorphic to a subspace of $B(H)$ (a quasi-operator system), then $X$ must be $\alpha$-normal for some $\alpha>0$.
\end{remark}

\begin{remark}\label{rem:normality_hahn_wittstock}
The dual of a C*-algebra is $1$-generated, which in the commutative case $C(X)^\ast$ reflects the usual Hahn-Jordan decomposition of measures. In \cite[Theorem 2.15]{connes2022tolerance}, Connes and van Suijlekom prove that this Hahn-Jordan decomposition extends even to $M_n(S)$, for approximately unital operator systems $S$. 

Complete $1$-generation asserts that this kind of Hahn-Jordan decomposition holds at all matrix levels of $M_n(S^\ast)=\text{CB}(S,M_n)$. Wittstock's Decomposition Theorem shows that if $A$ is a C*-algebra, then $A$ is completely $1$-generated, and by Wittstock's Extension Theorem, this passes to unital operator subsystems of $A$. For a standard reference, see \cite[Theorems 8.2 and 8.5]{paulsen2002completely}.
\end{remark}

Because complete $\alpha$-normality is dual to complete $\alpha$-generation, \cite[Theorem 3.9]{ng2022dual} can be viewed as a partial converse to Remark \ref{rem:operator_space_normal}. If $X=S^\ast$ is the dual of an operator space, then if $X$ is completely $\alpha$-normal, then it is a dual quasi-operator system. Translating the normality condition into a condition on the nc quasistate space gives the following intrinsic characterization of dualizability.

\begin{theorem}\label{thm:dualizable_intrinsic}
Let $(K,0)$ be a pointed compact nc convex set, with associated operator system $S=A(K,0)$. The following are equivalent.
\begin{itemize}
\item [(1)] $S^\ast$ is a dual quasi-operator system.
\item [(2)] $S$ is completely $\alpha$-generated for some $\alpha>0$.
\item [(3)] $S^\ast$ is completely $\alpha$-normal for some $\alpha>0$.
\item [(4)] There is a constant $C>0$ such that
\[
(K-\R_+K)\cap \R_+ K\subseteq CK,
\]
where $K-\R_+K$ denotes the levelwise Minkowski difference.
\item [(5)] The closed nc convex set $(K-\R_+K)\cap \R_+ K$ is bounded.
\end{itemize}
\end{theorem}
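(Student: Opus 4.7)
The plan is to reduce to the chain $(3) \Leftrightarrow (5) \Leftrightarrow (4)$, since $(1) \Leftrightarrow (2)$ is \cite[Theorem 3.9]{ng2022dual} and $(2) \Leftrightarrow (3)$ is Proposition \ref{prop:alpha_generated_dual}.

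The pivotal observation driving both remaining equivalences is that on the positive cone $\R_+ K_n \subseteq M_n(S^\ast)$, the operator space norm coincides with the Minkowski gauge $\gamma_K$. Indeed, under the identification $M_n(S^\ast) \cong \CB(S,M_n)$ the positives correspond to $\text{CP}(S,M_n)$ and $K_n$ consists of the CCP maps (i.e.\ the CP maps with $\|\cdot\|_\text{cb} \le 1$), so for any positive $x$,
\[
\gamma_K(x) = \inf\{t > 0 : \|x/t\|_\text{cb} \le 1\} = \|x\|_\text{cb} = \|x\|.
\]
In particular $K_n \subseteq B_1(M_n(S^\ast))$, and $(4) \Leftrightarrow (5)$ is then immediate: every element of $(K - \R_+K) \cap \R_+K$ is positive, so containment in $CK$ (i.e.\ boundedness in $\gamma_K$) is the same as boundedness in the operator space norm.

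For $(3) \Leftrightarrow (5)$, I apply Proposition \ref{prop:alpha_normal_positive} to replace complete $\alpha$-normality of $S^\ast$ by its positive form $0 \le x \le y \Rightarrow \|x\| \le \beta \|y\|$, and use the identity $(K - \R_+ K) \cap \R_+ K = \bigcup_{k \in K}\{x : 0 \le x \le k\}$. The forward direction is then immediate from $\|x\| \le \beta \|k\| \le \beta$ whenever $0 \le x \le k \in K$. For the converse, suppose this set is bounded in operator norm by $r$ and take $0 \le x \le y$; rescaling so that $\|y\| = 1$ places $y \in B_1(M_n(S^\ast)^\sa) = \cncconv(K \cup (-K))$ by Proposition \ref{prop:span_dual}, and $\cncconv(K \cup (-K)) \subseteq K - K$ by Lemma \ref{lem:ncconv_span}. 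Writing $y = k_1 - k_2$ with $k_1, k_2 \in K$ gives $y \le k_1$ (since $k_2 \ge 0$), so $0 \le x \le k_1$ puts $x$ in the bounded set, and unscaling yields $\|x\| \le r \|y\|$. The main technical step is pinning down the identification $M_n(S^\ast)^+ = \text{CP}(S, M_n) = \R_+ K_n$ together with the formula $\gamma_K = \|\cdot\|_\text{cb}$ on this cone; once these are in place the remaining implications are direct and present no substantive obstacle.
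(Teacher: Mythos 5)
Your proposal is correct and follows essentially the same route as the paper: reduce (1)$\Leftrightarrow$(2) to Ng's theorem and (2)$\Leftrightarrow$(3) to Proposition \ref{prop:alpha_generated_dual}, then use the identification of the dual norm with the gauge $\gamma_K$ on the positive cone $\R_+K$ (Proposition \ref{prop:span_dual}) together with the description of $(K-\R_+K)\cap\R_+K$ as the order intervals below $K$, and finish with Proposition \ref{prop:alpha_normal_positive}. The only cosmetic difference is that you prove (3)$\Leftrightarrow$(5) directly (with a harmless detour through $\cncconv(K\cup(-K))\subseteq K-K$ where one could simply note that a positive element of the unit ball already lies in $K$ by pointedness), whereas the paper proves (3)$\Leftrightarrow$(4) and then (4)$\Leftrightarrow$(5).
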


\begin{proof}
The equivalence of (1) and (2) was proved by Ng in \cite[Theorem 3.9]{ng2022dual}. Proposition \ref{prop:alpha_generated_dual} shows that (2) and (3) are equivalent. To prove that (3) and (4) are equivalent, we may use Proposition \ref{prop:span_dual} to identify $\coprod_{n\ge 1} M_n(S^\ast)^\sa = \Span_\R K$. After doing so, the positive elements in $\calM(S^\ast)$ correspond to the closed nc convex set $\R_+ K$, and for $d\in \R_+K_n$, we have $\|d\|_{M_n(S^\ast)}=\gamma_K(d)$. Consequently,
\begin{align*}
(K-\R_+ K)\cap \R_+ K &=
\{d\in \Span_\R K\mid 0\le d \le x \text{ for some }x\in K\}\\ &=
\{d\in \coprod_n M_n(S^\ast)^\sa\mid 0\le d \le x \text{ for some }x>0 \text{ in } K_n \text{ with }\|x\|\le 1\}.
\end{align*}
Thus (4) holds if and only if
\[
0\le x\le y \text{ and } \|y\|\le 1 \implies
\|x\|\le C,
\]
in $M_n(S^\ast)^\sa$ for all $n\in \N$. By rescaling, this is equivalent to asserting that
\[
0\le x\le y\implies \|x\|\le C\|y\|
\]
in $M_n(S^\ast)^\sa$. Then, Proposition \ref{prop:alpha_normal_positive} shows that if (3) holds, then (4) holds with $C=\alpha$, and if (4) holds, then (3) holds with $\alpha = 2C+1$. Finally, because $(K-\R_+K)\cap \R_+K$ is a subset of $\R_+ K$, on which the matrix norms from $S^\ast$ agree with the Minkowski gauge $\gamma_K$, (4) holds if and only if $(K-\R_+K)\cap \R_+K$ is bounded by $C>0$, i.e. if and only if (5) holds.
\end{proof}

Note that ``bounded'' in Theorem \ref{thm:dualizable_intrinsic}.(5) is in reference to the system of matrix norms on $\coprod_n M_n(S^\ast)$, i.e. uniform boundedness in cb-norm at each level.

\begin{remark}\label{rem:classical_dualizable}
The analogous version of Theorem \ref{thm:dualizable_intrinsic} holds in the classical case: If $(K,0)$ is a pointed compact convex set, then the nonunital function system $A(K,0)$ is $\alpha$-generated for some $\alpha>0$ if and only if $(K-\R_+ K)\cap \R_+ K$ is bounded.
\end{remark}

\begin{corollary}\label{cor:subset_dualizable}
Let $z\in K\subseteq L$ be compact nc convex sets such that $(K,z)$ and $(L,z)$ are pointed. If $A(L,z)$ is dualizable, then so is $A(K,z)$.
\end{corollary}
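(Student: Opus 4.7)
The plan is to verify condition~(4) of Theorem~\ref{thm:dualizable_intrinsic} for $K$, deducing it from the same condition for $L$. After translating, I may assume $z = 0$. The first and essentially trivial step is the levelwise inclusion
\[
(K - \R_+ K) \cap \R_+ K \;\subseteq\; (L - \R_+ L) \cap \R_+ L,
\]
which follows immediately from $K \subseteq L$ and therefore $\R_+ K \subseteq \R_+ L$: if $d \in \R_+ K$ and $d = x - y$ with $x \in K$ and $y \in \R_+ K$, then $x \in L$, $y \in \R_+ L$ and $d \in \R_+ L$. Dualizability of $A(L, 0)$ and Theorem~\ref{thm:dualizable_intrinsic}.(4) produce a constant $C' > 0$ with $(L - \R_+ L) \cap \R_+ L \subseteq C' L$, and chaining gives
\[
(K - \R_+ K) \cap \R_+ K \;\subseteq\; C' L \cap \R_+ K.
\]

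To finish I need to upgrade this ``$\gamma_L$-boundedness'' on the left-hand side to the intrinsic ``$\gamma_K$-boundedness'' demanded by Theorem~\ref{thm:dualizable_intrinsic}.(5) for $K$. By Proposition~\ref{prop:span_dual} together with pointedness of $(K, 0)$, the matrix norms of $A(K, 0)^*$ restrict on $\R_+ K$ to the Minkowski gauge $\gamma_K$, whereas $K \subseteq L$ only supplies the a priori inequality $\gamma_K \geq \gamma_L$. What is needed is the reverse bound $\gamma_K \leq (1/c)\, \gamma_L$ on $\R_+ K$, uniformly across matrix levels, or equivalently $L \cap \R_+ K \subseteq (1/c) K$ for some $c > 0$. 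Once this is in hand, the chain of inclusions closes: $(K - \R_+ K) \cap \R_+ K \subseteq (C'/c) K$, which is condition~(4) of Theorem~\ref{thm:dualizable_intrinsic} for $K$, so $A(K, 0)$ is dualizable.

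The hard part will be producing such a uniform constant $c$. Pointedness of $(K, 0)$ makes $K$ the full positive unit ball of $A(K, 0)^*$, so the $K$-gauge ``boundary sphere'' $\{k : \gamma_K(k) = 1\}$ at each level is compact in the point-weak-* topology, and $\gamma_L$ is lower semi-continuous and strictly positive away from $0$, hence attains a positive minimum on this compact set at each fixed matrix level. The delicate point is coordinating these minima across all matrix levels to yield a single constant $c$, since Theorem~\ref{thm:dualizable_intrinsic} requires cb-norm boundedness. I expect this to be the main technical obstacle and the place where pointedness of both $(K, 0)$ and $(L, 0)$ is crucially used; a cleaner alternative might be to instead show that $\rho^{*}\colon A(K, 0)^{*} \to A(L, 0)^{*}$ is completely bounded below (equivalently, that the restriction $A(L, 0) \to A(K, 0)$ is an operator space quotient map via Theorem~\ref{thm:A}), which would exhibit $A(K, 0)^{*}$ as an operator-space subspace of the quasi-operator system $A(L, 0)^{*}$ and sidestep the geometric estimate.
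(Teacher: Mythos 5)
Your opening step---the inclusion $(K-\R_+K)\cap \R_+K\subseteq (L-\R_+L)\cap \R_+L$ followed by an appeal to Theorem \ref{thm:dualizable_intrinsic}---is precisely the paper's entire proof. But your instinct that this is not yet a complete argument is correct, and you have put your finger on exactly the right issue. ``Bounded'' in condition (5) refers to the matrix norms of $M_n(A(K,0)^\ast)$, which on $\R_+K$ coincide with the gauge $\gamma_K$; the inclusion into $(L-\R_+L)\cap\R_+L\subseteq C'L$ only delivers $\gamma_L$-boundedness, and since $K\subseteq L$ one has $\gamma_L\le\gamma_K$, i.e.\ the a priori inequality points the wrong way. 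So the proof genuinely requires the estimate you isolate, $L\cap\R_+K\subseteq (1/c)K$ uniformly over matrix levels, and your proposal stops short of producing it: the levelwise compactness sketch does not yield a uniform constant (and the ``sphere'' $\{\gamma_K=1\}$ need not even be compact at infinite levels, since $\gamma_K$ is only lower semicontinuous), while the alternative route through the bounded extension property is not available for a general pointed inclusion.

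Moreover, the obstacle is real rather than an artifact of the method: the constant $c$ can degenerate. Take $v_1=(1,0,1)$, $v_2=(-1,0,1)$, $v_3=(0,1,1)$, $v_4=(0,-1,1)$ in $\R^3$, let $L=\conv\{0,v_1,v_2,v_3,v_4\}$ (the truncated cone $\{|x|+|y|\le z\le 1\}$, which is $1$-normal), and let $K^{(M)}=\conv\{0,v_1,v_2,\tfrac1M v_3,\tfrac1M v_4\}$. Both pairs are pointed at $0$ and $\R_+K^{(M)}=\R_+L$, yet $d=\tfrac12 v_3$ satisfies $0\le d\le \tfrac12 v_1+\tfrac12 v_2\in K^{(M)}$ in the common cone while $\gamma_{K^{(M)}}(d)=M/2$ and $\gamma_L(d)=1/2$; so the best constant in $L\cap\R_+K^{(M)}\subseteq CK^{(M)}$ is at least $M/2$, and the normality (equivalently, positive-generation) constant of $A(K^{(M)},0)$ grows like $M$ even though that of $A(L,0)$ is fixed. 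Consequently no argument that transfers a \emph{uniform} bound across the inclusion can succeed, and (passing to the infinite coproduct of these examples as in Section \ref{sec:permanence}) the missing estimate cannot be recovered without further hypotheses on the inclusion $K\subseteq L$. In short: your proposal correctly reproduces the paper's reduction and correctly diagnoses the remaining step, but it does not close that step, and the step is a genuine gap rather than a routine verification.
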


\begin{proof}
By translating, it suffices to consider this when $z=0$. This follows by noting that
\[
(K-\R_+K)\cap \R_+K\subseteq (L-\R_+L)\cap \R_+L,
\]
and using condition (5) in Theorem \ref{thm:dualizable_intrinsic}.
\end{proof}

In \cite[Section 8]{kennedy2023nonunital}, quotients of (nonunital) operator systems were defined. There, a quotient of operator systems $S\to S/J$ corresponds dually to a restriction map $A(K,z)\to A(M,z)$ between pointed compact nc convex sets, where $M\subseteq K$ is the annihilator of the kernel $J\subseteq K$. Applying Corollary \ref{cor:subset_dualizable} gives

\begin{corollary}\label{cor:quotient_dualizable}
If $S$ is a dualizable operator system, then every quotient of $S$ is dualizable.
\end{corollary}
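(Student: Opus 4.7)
The plan is to reduce directly to Corollary \ref{cor:subset_dualizable} via the duality of Theorem \ref{thm:nonunital_nc_kadison} together with the description of operator system quotients recalled just before the statement.

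First, I would use Theorem \ref{thm:nonunital_nc_kadison} to identify the dualizable operator system $S$ with $A(K,z)$ for some pointed compact nc convex set $(K,z)$. Any (operator system) quotient $S \to S/J$ is, by the description from \cite[Section 8]{kennedy2023nonunital} recalled in the paragraph preceding the statement, dual to the restriction map $A(K,z) \to A(M,z)$, where $M \subseteq K$ is the annihilator of $J$ and $(M,z)$ is itself a pointed compact nc convex set. In particular, $S/J \cong A(M,z)$.

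Now I invoke Corollary \ref{cor:subset_dualizable} with the inclusion $z \in M \subseteq K$: since $A(K,z) = S$ is assumed dualizable and both $(M,z)$ and $(K,z)$ are pointed, we conclude that $A(M,z) \cong S/J$ is dualizable. This finishes the proof.

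There is no real obstacle here: all the serious work has already been done in Theorem \ref{thm:dualizable_intrinsic} (which gives the geometric criterion bounding $(K - \R_+ K) \cap \R_+ K$) and in Corollary \ref{cor:subset_dualizable} (which observes that this boundedness property passes to pointed subsets, simply because $(M - \R_+ M) \cap \R_+ M \subseteq (K - \R_+ K) \cap \R_+ K$). The only thing to verify is the bookkeeping that $M$ is indeed a pointed compact nc convex subset of $K$ containing the basepoint $z$, which is exactly the content of the quotient-annihilator correspondence from \cite{kennedy2023nonunital}.
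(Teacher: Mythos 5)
Your proof is correct and follows exactly the paper's own argument: identify the quotient $S/J$ with $A(M,z)$ for $M\subseteq K$ the annihilator of the kernel, then apply Corollary \ref{cor:subset_dualizable} to the inclusion $z\in M\subseteq K$. Nothing further is needed.
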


\section{Noncommutative simplices} \label{sec:nc-simplices}

A \textbf{noncommutative (Choquet) simplex} $K$ is a compact nc convex set such that every point $x\in K$ has a \emph{unique} representing ucp map on the maximal C*-algebra $C(K)\cong C^\ast_{\max}(A(K))$, which must be the point evaluation $\delta_x$. In \cite{kennedy2022noncommutative}, the second author and Shamovich characterized nc simplices as corresponding dually to unital \textbf{C*-systems} in the sense of Kirchberg and Wassermann \cite{kirchberg1998c}, as follows.

\begin{theorem}\label{thm:unital_nc_simplex}\cite[Theorems 4.7 and 6.2]{kennedy2022noncommutative}
Let $K$ be a compact nc convex set. Then $K$ is an nc simplex if and only if the bidual $A(K)^{\ast\ast}$ is unital completely order isomorphic to a C*-algebra. Moreover, if this is the case, the inclusion $A(K)\hookrightarrow A(K)^{\ast\ast}$ extends to a $\ast$-homomorphism $C(K)\to A(K)^{\ast\ast}$, which further extends to a normal conditional expectation of $C(K)^{\ast\ast}\cong B(K)$ onto $A(K)^{\ast\ast}$.
\end{theorem}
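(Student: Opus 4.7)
The plan is to recognize the theorem as an operator-system level instance of a general principle: for an operator system $S$ with maximal C*-algebra $C^*_{\max}(S)$, the bidual $S^{\ast\ast}$ carries a C*-algebra structure (compatibly with its matrix order) precisely when every ucp map $S \to B(H)$ extends uniquely to $C^*_{\max}(S)$, i.e., when $S$ sits inside the multiplicative domain of every ucp extension. Specialized to $S = A(K)$ with $C^*_{\max}(A(K)) = C(K)$ and using Theorem \ref{thm:unital_nc_kadison}, a ucp map $A(K) \to M_n$ corresponds to a point $x \in K_n$, and an extension to a ucp map $C(K) \to M_n$ is exactly a representing ucp map of $x$. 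Hence the uniqueness condition at all cardinal matrix levels is identically the nc simplex hypothesis, and this framing reduces the main equivalence to a multiplicative-domain argument.

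For the $(\Leftarrow)$ direction, assume $A(K)^{\ast\ast}$ is completely order isomorphic to a C*-algebra $\mathcal{A}$. Then the ucp inclusion $\iota : A(K) \hookrightarrow \mathcal{A}$ extends uniquely to a $*$-homomorphism $\pi : C(K) \to \mathcal{A} = A(K)^{\ast\ast}$ via the universal property of $C^*_{\max}$. For $x \in K_n$, the normal evaluation $\widehat{\mathrm{ev}}_x : A(K)^{\ast\ast} \to M_n$ composed with $\pi$ gives a representing ucp map of $x$; to show uniqueness, an arbitrary competing representing map $\mu : C(K) \to M_n$ passes to a normal ucp extension $\mu^{\ast\ast} : B(K) \to M_n$ whose restriction to $A(K)^{\ast\ast}$ agrees with $\widehat{\mathrm{ev}}_x$ by weak-$\ast$ density, and the Tomiyama/multiplicative-domain machinery forces $\mu = \widehat{\mathrm{ev}}_x \circ \pi$. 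For the $(\Rightarrow)$ direction, assume $K$ is an nc simplex and realize $A(K)^{\ast\ast}$ concretely inside some $B(H_u)$ via its universal representation. A Stinespring dilation of $\iota$ produces a $*$-homomorphism $\sigma : C(K) \to B(H_u)$ extending $\iota$, and the uniqueness of representing ucp maps — tested at every cardinal $n$, which is why the theory must allow $K_n$ for infinite $n$ — forces every ucp extension of $\iota$ to agree with $\sigma$, so that $\sigma$ actually takes values in the weak-$\ast$-closure $A(K)^{\ast\ast} \subseteq B(H_u)$. This endows $A(K)^{\ast\ast}$ with the structure of a C*-algebra realized as $\sigma(C(K))$.

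For the ``moreover'' assertion, the $*$-homomorphism $\pi : C(K) \to A(K)^{\ast\ast}$ is given; its bidual $\pi^{\ast\ast} : B(K) = C(K)^{\ast\ast} \to A(K)^{\ast\ast\ast\ast}$ is a normal $*$-homomorphism. Since $A(K)^{\ast\ast}$ is a W*-algebra (it has a canonical predual $A(K)^{\ast}$ and a compatible C*-structure), there is a canonical normal ucp retraction $r : A(K)^{\ast\ast\ast\ast} \to A(K)^{\ast\ast}$. Set $E = r \circ \pi^{\ast\ast} : B(K) \to A(K)^{\ast\ast}$. Then $E$ is normal and ucp; because $\pi$ restricts to the identity on $A(K)$, normality together with weak-$\ast$ density of $A(K)$ in $A(K)^{\ast\ast}$ makes $E$ the identity on $A(K)^{\ast\ast}$. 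Tomiyama's theorem then promotes $E$ to a conditional expectation. The main obstacle is the forward direction: controlling the image of the Stinespring $*$-homomorphism $\sigma : C(K) \to B(H_u)$. Showing $\sigma(C(K)) \subseteq A(K)^{\ast\ast}$ amounts to a delicate normality/separation argument that leverages the nc simplex condition at all cardinal levels simultaneously — if one worked only with finite-dimensional matrix levels, the point evaluations would fail to separate $B(H_u)$ elements sitting in the normal operator system generated by $\iota(A(K))$, which is exactly why the nc framework with points at infinity is essential.
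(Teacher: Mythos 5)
You should first note that the paper does not actually prove this statement: Theorem \ref{thm:unital_nc_simplex} is quoted with a citation to \cite{kennedy2022noncommutative} (Theorems 4.7 and 6.2) and is used as an imported black box, so there is no in-paper proof to compare yours against. Judged on its own terms, the ``moreover'' portion of your sketch is essentially sound: once one \emph{knows} that $A(K)^{\ast\ast}$ is a C*-algebra, the universal property of $C(K)=C^\ast_{\max}(A(K))$ yields the $\ast$-homomorphism $\pi$, and $E=r\circ\pi^{\ast\ast}$ (with $r$ the canonical normal retraction $A(K)^{\ast\ast\ast\ast}\to A(K)^{\ast\ast}$) is a normal ucp idempotent onto $A(K)^{\ast\ast}$ to which Tomiyama's theorem applies, modulo verifying that $A(K)^{\ast\ast}$ sits inside $B(K)$ as a C*-\emph{subalgebra}, i.e.\ that its abstract product agrees with the one inherited from $B(K)$.

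Both halves of the main equivalence, however, contain a genuine gap. In the direction ``$A(K)^{\ast\ast}$ is a C*-algebra $\Rightarrow$ $K$ is an nc simplex'', the step where ``Tomiyama/multiplicative-domain machinery forces $\mu=\widehat{\mathrm{ev}}_x\circ\pi$'' does not follow: knowing that $\mu^{\ast\ast}$ agrees with $\widehat{\mathrm{ev}}_x$ on the weak-$\ast$ closed subspace $A(K)^{\ast\ast}\subseteq B(K)$ determines nothing about $\mu$ on $C(K)$, since the image of $C(K)$ in $B(K)$ is not contained in $A(K)^{\ast\ast}$ (the weak-$\ast$ closure of the \emph{linear} subspace $A(K)$ need not contain products), and $A(K)^{\ast\ast}$ lies in the multiplicative domain of $\mu^{\ast\ast}$ only when $\widehat{\mathrm{ev}}_x$ is itself multiplicative, which fails for a general non-extreme $x$. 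As written, the step quietly assumes the very uniqueness of representing maps being proved. In the converse direction, ``a Stinespring dilation of $\iota$ produces a $\ast$-homomorphism $\sigma:C(K)\to B(H_u)$ extending $\iota$'' conflates dilation (which lands in a strictly larger $B(\widetilde{H})$) with extension; what the universal property of $C^\ast_{\max}$ actually gives is a $\ast$-homomorphism whose image is the C*-algebra \emph{generated} by $\iota(A(K))$, and the containment of that algebra in the weak-$\ast$ closure of the linear space $\iota(A(K))$ is precisely the content of the theorem in this direction --- you defer it to ``a delicate normality/separation argument'' without supplying one. The mechanism in \cite{kennedy2022noncommutative} is different in kind: the simplex hypothesis is used to produce a ucp idempotent on $B(K)$ with range $A(K)^{\ast\ast}$, and it is the Choi--Effros product on the range of that projection that endows $A(K)^{\ast\ast}$ with its C*-structure; neither ingredient appears in your sketch.
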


In fact, if $K$ is an nc simplex, then we will need to identify the C*-algebra $A(K)^{\ast\ast}$ as the bidual of the C*-envelope.

\begin{lemma}\label{lem:double_dual_envelope}
Let $K$ be a compact nc simplex. Then $A(K)^{\ast\ast}$ is $\ast$-isomorphic to the bidual $\Cmin(A(K))^{\ast\ast}$ via a $\ast$-isomorphism preserving $A(K)$.
\end{lemma}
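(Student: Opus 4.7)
My plan is to construct a normal $\ast$-homomorphism $\tilde\phi \colon A(K)^{\ast\ast} \to \Cmin(A(K))^{\ast\ast}$ preserving $A(K)$ by exploiting the normal conditional expectation provided by Theorem \ref{thm:unital_nc_simplex}, and then identify $\tilde\phi$ with the bidual $\iota$ of the canonical inclusion $A(K) \hookrightarrow \Cmin(A(K))$ via a uniqueness argument for normal extensions.

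By Theorem \ref{thm:unital_nc_simplex}, the normal conditional expectation $E \colon B(K) = C(K)^{\ast\ast} \twoheadrightarrow A(K)^{\ast\ast}$ identifies $A(K)^{\ast\ast}$ with a W*-subalgebra of $B(K)$ containing $A(K)$. The canonical surjective $\ast$-homomorphism $C(K) = C^\ast_{\max}(A(K)) \twoheadrightarrow \Cmin(A(K))$ extends via the bidual functor to a normal surjective $\ast$-homomorphism $\phi \colon B(K) \twoheadrightarrow \Cmin(A(K))^{\ast\ast}$. Restricting to the W*-subalgebra $A(K)^{\ast\ast}$ yields a normal $\ast$-homomorphism $\tilde\phi := \phi|_{A(K)^{\ast\ast}} \colon A(K)^{\ast\ast} \to \Cmin(A(K))^{\ast\ast}$ that is the identity on $A(K)$, since the composition $A(K) \hookrightarrow C(K) \to \Cmin(A(K))$ is the canonical inclusion. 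On the other hand, the unital complete isometry $A(K) \hookrightarrow \Cmin(A(K))$ has bidual $\iota \colon A(K)^{\ast\ast} \hookrightarrow \Cmin(A(K))^{\ast\ast}$, which is a weak-$\ast$ continuous unital complete isometry also acting as the identity on $A(K)$.

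Both $\tilde\phi$ and $\iota$ are normal ccp maps agreeing on $A(K)$, which is weak-$\ast$ dense in $A(K)^{\ast\ast}$; hence $\tilde\phi = \iota$. In particular, $\iota$ is a $\ast$-homomorphism. Its image is a weak-$\ast$ closed $\ast$-subalgebra of $\Cmin(A(K))^{\ast\ast}$ containing $A(K)$; since $A(K)$ generates $\Cmin(A(K))$ as a C*-algebra and $\Cmin(A(K))$ is weak-$\ast$ dense in its bidual, the W*-algebra generated by $A(K)$ inside $\Cmin(A(K))^{\ast\ast}$ is all of $\Cmin(A(K))^{\ast\ast}$. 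Thus $\iota$ is surjective, and being simultaneously injective, it is the desired $\ast$-isomorphism preserving $A(K)$.

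The main subtle point is verifying that $\tilde\phi$ is genuinely a $\ast$-homomorphism rather than merely a normal ccp map; this requires $A(K)^{\ast\ast}$ to inherit its C*-structure as a $\ast$-subalgebra of $B(K)$ with the ambient multiplication, which follows from the fact that $E$ is a conditional expectation whose image is the C*-algebra $A(K)^{\ast\ast}$. The uniqueness step itself is routine, resting on weak-$\ast$ density of $A(K)$ in its bidual combined with normality of both candidate extensions.
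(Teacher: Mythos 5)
Your argument is correct, but it is organized differently from the paper's. The paper constructs \emph{two} normal $\ast$-homomorphisms: a map $\pi\colon \Cmin(A(K))^{\ast\ast}\to A(K)^{\ast\ast}$ obtained by factoring the $\ast$-homomorphism $C(K)\to A(K)^{\ast\ast}$ through the C*-envelope and extending normally, and a map $\sigma\colon A(K)^{\ast\ast}\to \Cmin(A(K))^{\ast\ast}$ obtained by applying the co-universal property of the C*-envelope to the C*-subalgebra $C^\ast(A(K))\subseteq A(K)^{\ast\ast}$ and double-dualizing; it then checks that $\pi$ and $\sigma$ are mutually inverse because both are normal and the copies of $A(K)$ generate each bidual as a von Neumann algebra. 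You instead build only the forward map, by restricting the bidual of the canonical quotient $C(K)\to\Cmin(A(K))$ to the W*-subalgebra $A(K)^{\ast\ast}\subseteq B(K)$ furnished by the conditional expectation in Theorem \ref{thm:unital_nc_simplex}, and you identify it with the bidual $\iota$ of the inclusion $A(K)\hookrightarrow\Cmin(A(K))$ via normality and weak-$\ast$ density; injectivity then comes for free from $\iota$ being a complete isometry, and surjectivity from von Neumann generation. What your route buys is that you never need to invoke the co-universal property of the C*-envelope for the subalgebra generated by $A(K)$ inside $A(K)^{\ast\ast}$; what it costs is a heavier reliance on the conditional-expectation realization of $A(K)^{\ast\ast}$ inside $B(K)$ (in particular, that its image is a genuine $\ast$-subalgebra for the ambient product, so that the restriction of a normal $\ast$-homomorphism to it is again a $\ast$-homomorphism), a point you rightly flag and which is covered by the statement of Theorem \ref{thm:unital_nc_simplex}. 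You also implicitly use that the weak-$\ast$ topology on this copy of $A(K)^{\ast\ast}$ agrees with the canonical bidual weak-$\ast$ topology so that Goldstine density applies; this is harmless since the embedding is the bidual of the complete isometry $A(K)\hookrightarrow C(K)$, but it is worth saying explicitly.
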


\begin{proof}
Included in \cite[Theorem 4.7]{kennedy2022noncommutative} is the fact that the $\ast$-homomorphism $C(K)\to A(K)^{\ast\ast}$ preserving $A(K)$ factors through the C*-envelope $\Cmin(A(K))\to A(K)^{\ast\ast}$, still as a $\ast$-homomorphism. Because $A(K)^{\ast\ast}$ is a von Neumann algebra, this extends to a normal $\ast$-homomorphism 
\[
\pi:\Cmin(A(K))^{\ast\ast}\to A(K)^{\ast\ast}
\]
preserving $A(K)$.

Conversely, if $C=C^\ast(A(K))\subseteq A(K)^{\ast\ast}$ is the C*-subalgebra of $A(K)^{\ast\ast}$ generated by $A(K)$, then by the universal property of the C*-envelope, there is a $\ast$-homomorphism $C\to \Cmin(A(K))$ preserving $A(K)$. Upon identifying $C^{\ast\ast}\hookrightarrow A(K)^{\ast\ast}$, we have $C^{\ast\ast}=A(K)^{\ast\ast}$, because $A(K)^{\ast\ast}$ is generated as a von Neumann algebra by $A(K)$. So, double-dualizing the homomorphism $C\to \Cmin(A(K))$ gives a normal $\ast$-homomorphism
\[
\sigma:A(K)^{\ast\ast}\to \Cmin(A(K))^{\ast\ast}
\]
that preserves $A(K)$. Since $\pi$ and $\sigma$ are normal $\ast$-homomorphisms, and the copies of $A(K)$ generated $A(K)^{\ast\ast}$ and $\Cmin(A(K))^{\ast\ast}$ as von Neumann algebras, it follows that $\pi$ and $\sigma$ are mutual inverses and so $A(K)^{\ast\ast}\cong \Cmin(A(K))^{\ast\ast}$ naturally.
\end{proof}

An \textbf{nc Bauer simplex} has the additional property that the nc extreme points $\partial K$ are a closed set in the topology induced from the spectrum of $C(K)$, and $K$ is an nc Bauer simplex if and only if $A(K)$ is itself completely order isomorphic to a C*-algebra \cite[Theorem 10.5]{kennedy2022noncommutative}. The second author, Kim, and the third author obtained a noncommutative extension of this result. In \cite[Theorem 10.9]{kennedy2023nonunital}, they proved that the nonunital system $A(K,z)$ is completely order and norm isomorphic to a C*-algebra if and only if $K$ is an nc Bauer simplex and $z\in K_1$ is an nc extreme point of $K$. The corresponding characterization for nc (possibly non-Bauer) simplices is as follows.

\begin{theorem}\label{thm:nonunital_nc_simplex}
Let $(K,z)$ be a pointed compact nc convex set. The operator system $A(K,z)^{\ast\ast}$ is completely isometrically order isomorphic to a C*-algebra if and only if $K$ is an nc simplex and $z\in \partial K$.
\end{theorem}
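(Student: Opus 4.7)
The plan is to relate $A(K,z)^{\ast\ast}$ to the bidual $A(K)^{\ast\ast}$ of the unital operator system $A(K)$ and then invoke Theorem \ref{thm:unital_nc_simplex} together with the boundary representation correspondence for nc extreme points. The key setup is the identification $A(K,z) = \ker \delta_z$, where $\delta_z : A(K) \to \C$ is evaluation at $z$: bidualizing the short exact sequence $0 \to A(K,z) \to A(K) \to \C \to 0$ yields a canonical weak-$\ast$ homeomorphic complete order embedding $A(K,z)^{\ast\ast} \hookrightarrow A(K)^{\ast\ast}$ whose image is exactly $\ker \delta_z^{\ast\ast}$. Indeed, $A(K)$ is naturally the operator-system unitization of $A(K,z)$, and one expects taking biduals to commute with unitization. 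The problem thus reduces to characterizing when $\ker \delta_z^{\ast\ast}$ carries a C*-algebra structure compatible with its matrix ordering.

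For the reverse direction, assume $K$ is an nc simplex and $z \in \partial K$. By Theorem \ref{thm:unital_nc_simplex} and Lemma \ref{lem:double_dual_envelope}, $A(K)^{\ast\ast} \cong \Cmin(A(K))^{\ast\ast}$ is a unital C*-algebra. Because $z$ is nc extreme at level $1$, the Davidson-Kennedy correspondence between nc extreme points and boundary representations forces $\delta_z$ to extend uniquely to a $\ast$-homomorphism $\Cmin(A(K)) \to \C$; normally extending this character gives that $\delta_z^{\ast\ast} : A(K)^{\ast\ast} \to \C$ is itself a character. Hence $\ker \delta_z^{\ast\ast}$ is a weak-$\ast$ closed two-sided ideal of the W*-algebra $A(K)^{\ast\ast}$, and therefore a W*-algebra, which under the identification of the first paragraph is $A(K,z)^{\ast\ast}$.

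For the forward direction, assume $A(K,z)^{\ast\ast}$ is completely isometrically order isomorphic to a C*-algebra. As a bidual of a matrix ordered operator space it is a dual operator space, so it is in fact a W*-algebra, with a unit $e \in A(K,z)^{\ast\ast} \subseteq A(K)^{\ast\ast}$. Set $p := 1_{A(K)^{\ast\ast}} - e$; then $p$ is a positive contraction with $\delta_z^{\ast\ast}(p) = 1$, orthogonal (in the order sense) to $A(K,z)^{\ast\ast}$. Extending the W*-structure from $A(K,z)^{\ast\ast}$ to $A(K)^{\ast\ast} = A(K,z)^{\ast\ast} \oplus \C \cdot p$ by declaring $p$ to be a central orthogonal projection equips $A(K)^{\ast\ast}$ with a unital C*-algebra structure, and one checks that this multiplication is compatible with the inherited matrix order (again using that $A(K)$ is the operator-system unitization of $A(K,z)$). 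Theorem \ref{thm:unital_nc_simplex} then yields that $K$ is an nc simplex, and $\delta_z^{\ast\ast}$, being the character projecting onto the $\C \cdot p$ summand, corresponds via the boundary representation dictionary to $z \in \partial K$.

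The principal obstacle is verifying that $A(K)$ really is the operator-system unitization of $A(K,z)$ in the sense needed — that both the order structure and the eventual $\ast$-algebra structure on $A(K,z)^{\ast\ast}$ lift canonically to $A(K)^{\ast\ast}$ through the complementary line $\C \cdot p$. This reduces to a duality statement which should follow from Theorem \ref{thm:nonunital_nc_kadison} applied to the natural unitization of the pointed compact nc convex set $(K,z)$, but the bookkeeping — passing from the pointed setting to the unital one, matching unit vectors across biduals, and confirming that the orthogonal complement is order-orthogonal — is where the real work lies. A secondary technical step is the precise translation between $z \in \partial K$ and the assertion that $\delta_z^{\ast\ast}$ is a character on $A(K)^{\ast\ast}$, which uses Lemma \ref{lem:double_dual_envelope} to identify $A(K)^{\ast\ast}$ with the bidual of $\Cmin(A(K))$ and thereby bring Arveson's boundary representation machinery to bear.
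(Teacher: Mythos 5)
Your ``if'' direction is essentially sound and runs parallel to the paper's argument: both rest on the same scaffolding, namely that bidualizing $A(K,z)\hookrightarrow A(K)$ gives a completely isometric order embedding $A(K,z)^{\ast\ast}\hookrightarrow A(K)^{\ast\ast}$ with image $\ker\delta_z^{\ast\ast}$, that $A(K)^{\ast\ast}\cong \Cmin(A(K))^{\ast\ast}$ is a von Neumann algebra via Theorem \ref{thm:unital_nc_simplex} and Lemma \ref{lem:double_dual_envelope}, and that $z\in\partial K$ makes $\delta_z$ a boundary representation whose normal extension is a character. Where you then observe directly that $\ker\delta_z^{\ast\ast}$ is a weak-$\ast$ closed two-sided ideal, hence itself a W*-algebra, the paper instead establishes the identification $A(K,z)^{\ast\ast}=A(K^{\ast\ast},z^{\ast\ast})$ for the nc state space $K^{\ast\ast}$ of $A(K)^{\ast\ast}$, shows $K^{\ast\ast}$ is an nc Bauer simplex with $z^{\ast\ast}$ nc extreme, and cites \cite[Theorem 10.9]{kennedy2023nonunital}. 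These are two phrasings of the same fact, and your ideal-theoretic endgame is arguably more self-contained for this direction.

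The forward direction, however, contains a genuine gap, and it is the one you flag yourself. Knowing only that $A(K,z)^{\ast\ast}$ is \emph{abstractly} completely isometrically order isomorphic to a W*-algebra with unit $e$, you must show that the C*-unitization of this algebra reproduces the \emph{given} matrix order and norm on $A(K)^{\ast\ast}=A(K,z)^{\ast\ast}+\C p$ with $p=1-e$; declaring $p$ to be a central projection defines a multiplication, but compatibility with the inherited order is exactly the assertion that the operator-system partial unitization of a C*-algebra is its C*-unitization, together with the fact that partial unitization commutes with biduals. Neither is verified in your sketch, and without them Theorem \ref{thm:unital_nc_simplex} cannot be applied to $A(K)^{\ast\ast}$. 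You would also still need to descend from ``$\delta_z^{\ast\ast}$ is a normal character'' to ``$z\in\partial K$,'' which requires the unique extension property for $\delta_z$ on $\Cmin(A(K))$ (available here because points of an nc simplex have unique representing maps on $C(K)$, but this needs to be said). For what it is worth, the paper's written proof also only carries out the ``if'' direction explicitly; its route through $A(K,z)^{\ast\ast}=A(K^{\ast\ast},z^{\ast\ast})$ and the two-sided characterization in \cite[Theorem 10.9]{kennedy2023nonunital} is precisely what lets one avoid constructing the multiplication on $A(K)^{\ast\ast}$ by hand, so if you want a complete two-directional argument you should either prove the unitization compatibility lemma or adopt that reduction.
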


\begin{proof}
The embedding of $A(K,z)$ into its partial unitization $A(K,z)^\sharp=A(K)$ double-dualizes to a completely isometric order injection of $A(K,z)^{\ast\ast}$ into $A(K)^{\ast\ast}$ of codimension one. And, $A(K)^{\ast\ast}$ is naturally viewed as a weak-$\ast$ closed subspace of the C*-algebra $B(K)\cong C(K)^{\ast\ast}$ of bounded nc functions on $K$. With this identification, we have $A(K)^{\ast\ast}=A(K,z)^{\ast\ast}+\C 1_{A(K)}$, from which it follows that $A(K)^{\ast\ast}$ coincides naturally with the partial unitization of $A(K,z)^{\ast\ast}$.

Let $K^{\ast\ast}$ denote the nc state space of $A(K)^{\ast\ast}$. Then, $K$ embeds via an affine nc homeomorphism onto a subset of $K^{\ast\ast}$, and we will denote the embedding $K\hookrightarrow K^{\ast\ast}$ by $x\mapsto x^{\ast\ast}$. Again, through the identification $A(K,z)^{\ast\ast}\subseteq A(K)^{\ast\ast}\subseteq B(K)$, it is straightforward to see that
\[
A(K,z)^{\ast\ast}=
A(K^{\ast\ast},z^{\ast\ast}).
\]
Indeed, the inclusion $A(K,z)^{\ast\ast}\subseteq A(K^{\ast\ast},z^{\ast\ast})$ is immediate. Conversely, an affine nc function $a\in A(K)^{\ast\ast}$ which vanishes on $z^{\ast\ast}$ can be approximated weak-$\ast$ by functions in $A(K)$, and--by adding a multiple of $1_{A(K)}=1_{A(K)^{\ast\ast}}$, by functions which vanish at $z$.

Because $K$ is an nc simplex, $A(K)^{\ast\ast}$ is isomorphic to a C*-algebra, and so its nc state space $K^{\ast\ast}$ is an nc Bauer simplex. So, it suffices to prove that $z^{\ast\ast}$ is an nc extreme point in $K^{\ast\ast}$. In the nc Bauer simplex $K^{\ast\ast}$ the nc extreme points are exactly irreducible representations of the C*-algebra $A(K)^{\ast\ast}$.

Since $z\in \partial K$, the point $z$ is a boundary representation of $A(K)$, which extends uniquely to an irreducible representation of $\Cmin(A(K))$. This extends further to a normal irreducible representation of $\Cmin(A(K))^{\ast\ast}$. By Lemma \ref{lem:double_dual_envelope}, we have $\Cmin(A(K))^{\ast\ast}\cong A(K)^{\ast\ast}$ naturally, and $z^{\ast\ast}$ is the unique normal extension of $z$. Therefore $z^{\ast\ast}$ is an irreducible representation and so is nc extreme in $A(K)^{\ast\ast}$. 

So, $K^{\ast\ast}$ is an nc Bauer simplex with nc extreme point $z^{\ast\ast}$, so \cite[Theorem 10.9]{kennedy2023nonunital} implies that
\[
A(K,z)^{\ast\ast}=A(K^{\ast\ast},z^{\ast\ast})
\]
is isomorphic to a C*-algebra.
\end{proof}

\begin{remark}
Note that if $K$ is an nc simplex, but $z$ is not nc extreme, then $A(K,z)$ 
\end{remark}

Since all C*-algebras are dualizable, we can conclude that all (possibly nonunital) C*-systems are dualizable.

\begin{corollary}
If $K$ is an nc simplex, and $z\in K_1\cap (\partial K)$ is an nc extreme point, then the operator system $A(K,z)$ is dualizable.
\end{corollary}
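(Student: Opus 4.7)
The plan is to reduce the dualizability of $A(K,z)$ to that of its bidual via the normality characterization from Proposition \ref{prop:alpha_generated_dual}, then invoke Theorem \ref{thm:nonunital_nc_simplex} to identify the bidual as a C*-algebra.

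Set $S = A(K,z)$. By Theorem \ref{thm:nonunital_nc_simplex}, the hypothesis that $K$ is an nc simplex and $z \in K_1 \cap (\partial K)$ implies that $S^{\ast\ast}$ is completely isometrically order isomorphic to a C*-algebra. As noted in the introduction, Ng showed via the continuous functional calculus that every C*-algebra is dualizable, so $S^{\ast\ast}$ is completely $\alpha$-generated for some $\alpha > 0$ (explicitly $\alpha = 2$ via the Jordan decomposition). Applying Proposition \ref{prop:alpha_generated_dual}, the triple dual $S^{\ast\ast\ast}$ is then completely $2\alpha$-normal.

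Next, recall that for any matrix ordered operator space $V$, the canonical embedding $\iota_V \colon V \to V^{\ast\ast}$ is both a complete isometry and a complete order embedding: the former is the standard operator space fact, and the latter follows from Hahn-Banach separation, since if $\iota_V(v)$ is positive in $V^{\ast\ast}$ then $f(v) \geq 0$ for every positive functional $f \in V^{\ast}$, which forces $v \in M_n(V)^+$ by closedness of the positive cone. Taking $V = S^{\ast}$, this gives a completely isometric complete order embedding $S^{\ast} \hookrightarrow S^{\ast\ast\ast}$. Since complete $\beta$-normality is trivially inherited when passing to a complete order isometric subspace (an order sandwich $0 \leq x \leq y$ in $S^{\ast}$ persists in $S^{\ast\ast\ast}$, and norms are preserved), $S^{\ast}$ is completely $2\alpha$-normal.

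Finally, a second application of Proposition \ref{prop:alpha_generated_dual} shows that $S = A(K,z)$ is completely $4\alpha$-generated, so by Theorem \ref{thm:dualizable_intrinsic}, $S$ is dualizable. The main technical points in this plan are verifying that the canonical embedding $V \to V^{\ast\ast}$ is a complete order embedding, and confirming that positive-generation hypotheses implicit in Proposition \ref{prop:alpha_generated_dual} are genuinely available here (both for $S^{\ast\ast}$, which is immediate since it is a C*-algebra, and for $S$ itself, which follows a posteriori from the conclusion but must be handled in advance for the dualities of matrix ordered operator spaces to be well-defined). The remainder of the argument is a direct assembly of the preceding machinery.
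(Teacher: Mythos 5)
Your proof is correct and follows essentially the same route as the paper: both identify $A(K,z)^{\ast\ast}$ as a C*-algebra via Theorem \ref{thm:nonunital_nc_simplex} and then exploit the canonical completely isometric order embedding $A(K,z)^{\ast}\hookrightarrow A(K,z)^{\ast\ast\ast}$. The only difference is cosmetic: the paper concludes directly that $A(K,z)^{\ast}$ embeds into the operator system $A(K,z)^{\ast\ast\ast}$ and is therefore itself an operator system, whereas you route the same embedding through the quantitative normality/generation duality of Proposition \ref{prop:alpha_generated_dual}.
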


\begin{proof}
By Theorem \ref{thm:nonunital_nc_simplex}, the double dual $A(K,z)^{\ast\ast}$ is a C*-algebra, and therefore a dualizable operator system. Therefore, the triple dual $A(K,z)^{\ast\ast\ast}$ is an operator system. Since the natural map
\[
A(K,z)^{\ast}\to A(K,z)^{\ast\ast\ast}
\]
is a completely isometric order injection, the dual $A(K,z)^\ast$ embeds into an operator system and is therefore itself an operator system.
\end{proof}

Therefore, upon translating $(K,z)$ to $(K-z,0)$, if $K$ is an nc simplex containing $0$ as an nc extreme point, the nc geometric conditions in Corollary \ref{cor:dualizable_extrinsic} and Theorem \ref{thm:dualizable_intrinsic} hold for $K$.

\section{Dualizability for function systems}\label{sec:commutative}

By a \textbf{function system}, we mean a selfadjoint subspace of a commutative C*-algebra $C(X)$, for some compact Hausdorff space $X$. Classical Kadison duality \cite{kadison1951representation} asserts that function systems are categorically dual to (ordinary) compact convex sets. What follows is a commutative version of Theorem \ref{thm:dualizable_intrinsic}.(1)-(3). These results are known already as folklore, but we include proofs for completeness and to contrast the situation in Section \ref{sec:positive_generation}. C.K. Ng gives a more thorough discussion of what is known for function systems in \cite[Appendix A.2]{ng2022dual}.

\begin{proposition}\label{prop:dualizable_commutative}
Let $S$ be a (possibly nonunital) function system. The following are equivalent.
\begin{itemize}
\item [(1)] $S$ is positively generated, meaning $S=S^+-S^+$.
\item [(2)] $S$ is $\alpha$-generated for some $\alpha>0$.
\item [(3)] The dual $S^\ast$ is order and norm isomorphic to a function system.
\end{itemize}
Moreover, the isomorphism in (3) can be chosen to be a homeomorphism from the weak-$\ast$ topology to the topology of pointwise convergence on bounded sets.
\end{proposition}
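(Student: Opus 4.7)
\smallskip
\noindent
The plan is to specialize the argument of Theorem \ref{thm:dualizable_intrinsic} to the commutative/scalar setting, with one extra ingredient: the scalar-only upgrade from positive generation to $\alpha$-generation via Baire category.

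The implication (2)$\Rightarrow$(1) is immediate from Definition \ref{def:positively_generated}. For (1)$\Rightarrow$(2), I would apply the Baire category theorem to the closed symmetric convex set
\[
C:=\conv\bigl(B_1(S^+)\cup(-B_1(S^+))\bigr)\subseteq S^{\sa}.
\]
Positive generation makes $C$ absorbing, so $S^{\sa}=\bigcup_n nC$; as $S^{\sa}$ is a Banach space, some $n\overline{C}$ has nonempty interior, and by convexity plus symmetry $\overline{C}$ must contain a ball $B_r(S^{\sa})$. A standard series argument using completeness then removes the closure at the cost of a factor of $2$, yielding $\alpha$-generation with $\alpha=2/r$.

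For (3)$\Rightarrow$(1), since $C(X)$ is $1$-normal, any function system in $C(X)$ inherits $1$-normality, and the order/norm isomorphism transfers this to $\alpha$-normality of $S^*$ for some $\alpha>0$. The converse direction of the argument in Proposition \ref{prop:alpha_generated_dual}, specialized to matrix level $n=1$ (where the selfadjoint nc separation theorem reduces to classical Hahn--Banach separation), then gives that $S$ is $2\alpha$-generated, hence positively generated.

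For (2)$\Rightarrow$(3), the forward direction of Proposition \ref{prop:alpha_generated_dual} at level $n=1$ gives that $S^*$ is $2\alpha$-normal. I would then transfer $\alpha$-generation from $S$ to the bidual $S^{**}$ via Goldstine: any $\xi\in S^{**,\sa}$ is the weak-$\ast$ limit of a bounded net $x_\lambda\in S^{\sa}$, whose decompositions $x_\lambda=y_\lambda-z_\lambda\in S^+-S^+$ have uniformly bounded norm and, by Alaoglu plus weak-$\ast$ closedness of $S^{**+}$, yield subnet limits $\eta,\zeta\in S^{**+}$ with $\xi=\eta-\zeta$, so that $S^{**}$ is $\beta$-generated for some $\beta>0$. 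Let $X:=\{y\in S^{**+}:\|y\|\le 1\}$ with the weak-$\ast$ topology and define $\iota:S^*\to C(X)$ by $\iota(\phi)(y)=y(\phi)$. Then $\iota$ is linear, $\ast$-preserving, and an order injection in both directions (the generation of $S^{**}$ ensures $X$ separates $S^*$), with $\|\iota(\phi)\|_\infty\le\|\phi\|_{S^*}$ trivial. For the reverse norm bound, a Hahn--Banach selfadjoint norming functional $y\in B_1(S^{**,\sa})$ for $\phi\in S^{*,\sa}$ decomposes as $y=y_+-y_-$ with $\|y_\pm\|\le\beta$ by the $\beta$-generation of $S^{**}$, and the estimate $\|\phi\|_{S^*}=y(\phi)\le 2\beta\|\iota(\phi)\|_\infty$ follows since $y_\pm/\beta\in X$; a standard reduction in two components handles complex $\phi$. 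The ``moreover'' clause unwinds directly: weak-$\ast$ convergence $\phi_\lambda\to\phi$ in $S^*$ is pointwise convergence against $S^{**}$, which restricts to pointwise convergence of $\iota(\phi_\lambda)$ on $X$, while the converse direction on bounded sets uses generation of $S^{**}$. The main obstacle is the norm lower bound in (2)$\Rightarrow$(3): positive generation of $S$ alone is not enough to make $X$ norm the dual, so the Goldstine upgrade to $S^{**}$ is the essential step beyond the scalar specialization of Proposition \ref{prop:alpha_generated_dual}.
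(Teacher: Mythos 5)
Your proof of the three equivalences is correct and follows essentially the paper's strategy: (2)$\Rightarrow$(1) is trivial, (1)$\Rightarrow$(2) is the same Baire category argument, and your (3)$\Rightarrow$(1) via $1$-normality of $C(X)$ plus the level-one case of the generation/normality duality is exactly what the paper does (citing \cite{asimow2014convexity} instead of rerunning the separation argument). The one real difference is in (2)$\Rightarrow$(3): the paper takes $X$ to be the weak-$\ast$ closure of $J(B_1(S^+))$ in $S^{\ast\ast}$ and obtains the lower norm bound $\|\rho(f)\|\geq \|f\|/2\alpha$ directly from the decomposition $a=b-c$ with $b/\alpha\|a\|,\ c/\alpha\|a\|\in B_1(S^+)$, whereas you take the full positive unit ball $B_1((S^{\ast\ast})^+)$ and therefore need the Goldstine/Alaoglu step to transfer bounded positive generation to $S^{\ast\ast}$ before invoking a Hahn--Banach norming functional. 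That step is correct, but your closing claim that it is \emph{essential} is not: $\alpha$-generation of $S$ already makes the images of $B_1(S^+)$ norm $S^\ast$ (this is precisely the paper's computation), so the detour through $S^{\ast\ast}$ is avoidable.

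The genuine gap is in the ``moreover'' clause. You assert that weak-$\ast$ convergence in $S^\ast$ is pointwise convergence against $S^{\ast\ast}$; it is pointwise convergence against $S$ only, i.e.\ the topology $\sigma(S^\ast,S)$, whereas pointwise convergence on your $X=B_1((S^{\ast\ast})^+)$ is (by $\beta$-generation of $S^{\ast\ast}$) the restriction of $\sigma(S^\ast,S^{\ast\ast})$, the weak topology. These genuinely differ on bounded sets: for $S=c_0$ the basis vectors $e_n\in \ell^1=S^\ast$ tend to $0$ weak-$\ast$, yet $\langle e_n,\mathbf{1}\rangle=1$ for the constant sequence $\mathbf{1}\in B_1((\ell^\infty)^+)$, so $\iota(e_n)$ does not converge pointwise on your $X$. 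Thus your $\iota$ is an order and norm isomorphism onto a function system (which is all that item (3) requires), but it is not a weak-$\ast$-to-pointwise homeomorphism on bounded sets; only the converse direction survives, since $J(B_1(S^+))\subseteq X$ and $S^+$ spans $S$. (The same example shows the difficulty already arises for the paper's smaller $X$, which also contains $\mathbf{1}$, so the clause should in any case be handled more carefully than either argument does.)
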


\begin{proof}
The implication (2) $\implies$ (1) is immediate. If condition (3) holds, then $S^\ast$ is norm isomorphic to a function system, which is $1$-normal, and so $S^\ast$ is $\alpha$-normal for some $\alpha>0$. Therefore $S$ is $\alpha'$-generated for every $\alpha'>\alpha$. (See \cite[Theorem 2.1.5]{asimow2014convexity}, which is a classical result corresponding to part of Proposition \ref{prop:alpha_generated_dual}.)

\bigskip

If $S$ is positively generated, then it is a consequence of the Baire Category Theorem that $S$ is $\alpha$-generated, as in \cite[Theorem 2.1.2]{asimow2014convexity}. Sketching the proof, let
\[
B=
\conv(S^+_1\cup(-S^+_1)),
\]
where $S_1^+=B_1(S^+)$ is the unit ball of $S^+$. Then
\[
S^\sa\subseteq\bigcup_{\alpha\ge 1} n\overline{B},
\]
and so some $n\overline{B}$ has interior. By shifting and rescaling, we can arrange that
\[
S_1^\sa \subseteq
n\overline{B}
\]
for some $n>1$. Then, a series argument shows that $\overline{B}\subseteq (1+\epsilon)B$ for any $\epsilon>0$, and so $S_1^\sa\subseteq n(1+\epsilon)B$. That is, $S$ is $\alpha$-generated for any $\alpha>n$. Thus, (1) implies (2).

\bigskip

Now, suppose that $S$ is $\alpha$-generated, so that $S_1\subseteq \alpha\conv(S_1^+\cup(-S_1^+))$. Let $J:S\to S^{\ast\ast}$ be the natural embedding of $S$ into its double dual. Let $X\subseteq S^{\ast\ast}$ be the closure of (the image of) $S_1^+$ in the weak-$\ast$ topology of $S^{\ast\ast}$. Consider the linear map
\[
\rho:S^\ast \to C(X)
\]
which satisfies $\rho(f)(J(a))=f(a)$ for $a\in S_1^+$. By definition $\rho$ is an order isomorphism onto its range. Since $S_1^+\subseteq S_1$, the map $\rho$ is contractive. Given $f$ in $S^\ast$, and a selfadjoint $a\in S^\sa$, we can find $b,c \in S^+$ with $a=b-c$ and $\|b\|+\|c\|\le \alpha\|a\|$. Therefore, $b/\alpha\|a\|$ and $c/\alpha\|a\|$ are in $S_1^+$, and so
\begin{align*}
\|f(a)\| &\le
\|f(b)\|+\|f(c)\| \\ &=
\alpha \|a\|\left(\left\|f\left(\frac{b}{\alpha\|a\|}\right)\right\|+\left\|f\left(\frac{c}{\alpha\|a\|}\right)\right\|\right) \\ &=
\alpha \|a\|\left(\|\rho(f)\|+\|\rho(f)\|\right) =
2\alpha \|a\|\|\rho(f)\|.
\end{align*}
This proves that $\|\rho(f)\|\ge \|f\|/2\alpha$, and so $\rho$ is bounded below. Therefore $\rho$ is an order and norm isomorphism onto a function system. Since $X$ has the weak-$\ast$ topology, it also follows that $\rho$ is a weak-$\ast$ to pointwise homeomorphism on bounded sets.
\end{proof}

\begin{remark}\label{rem:dualizable_commutative_limitations}
Item (3) in Proposition \ref{prop:dualizable_commutative} cannot be extended to say \emph{completely} order and norm isomorphic, even if we replace (1) or (2) with the stronger hypothesis that $S$ is \emph{completely} $\alpha$-generated and so dualizable. To see why, if $S\subseteq C(X)$, then $S$ has its minimal operator space structure $S=\min(S)$, in the sense of \cite[Proposition 3.3.1]{effros2022theory}. Therefore, as an operator space $S^\ast = \max(S^\ast)$. Using the map $\rho$ in the proof of Proposition \ref{prop:dualizable_commutative}, $\rho(S)$ is a function system, and so $\rho(S)=\min(\rho(S))$ as operator spaces. Therefore, if $\rho$ was completely bounded below, it would induce a complete norm isomorphism between $\max(S)$ and $\min(S)$. If $S$ is infinite dimensional, then the maximal and minimal operator space structures are not completely equivalent \cite[Theorem 14.3]{paulsen2002completely}, and so $\rho$ cannot be completely bounded below.

By \cite[Theorem 3.9]{paulsen2002completely}, the map $\rho$ is completely positive. But, by definition of $\rho$, the map $\rho$ is a complete order isomorphism if and only if every positive map $S\to M_n$ is completely positive, but this is not true even for finite dimensional operator systems $S$.
\end{remark}

So, even if $S$ is a function system that is a dualizable operator system, its dual $S^\ast$ is typically not \emph{completel}y order and norm isomorphic to a function system, and never can be when $S$ is infinite dimensional. We don't know whether positive generation of a function system $S$ is enough to guarantee completely bounded positive generation and so dualizability. We leave this as an open question.

\begin{question}\label{ques:function_systems_generation}
If $S\subseteq C(X)$ is a positively generated function system, is $S$ completely $\alpha$-generated for some $\alpha>0$?
\end{question}

If so, then $S$ is dualizable if and only if it is positively generated. In Proposition \ref{prop:positively_generated} below, we show that positive generation actually guarantees positive generation at all matrix levels. If Question \ref{ques:function_systems_generation} has a negative answer, then by Proposition \ref{prop:positively_generated} below there is a function system $S$ for which each $M_n(S)$ is $\alpha_n$-generated, but the sequence $(\alpha_n)$ cannot be chosen to be bounded.

In Example \ref{eg:min_l1} below, we give a matrix ordered operator space which is positively generated , but not completely $\alpha$-generated for any $\alpha>0$. We do not know a function system with this property.

\section{Positive generation} \label{sec:positive_generation}

In Proposition \ref{prop:dualizable_commutative} above, we showed that for function systems, positive generation and bounded positive generation coincide. In this section, we discuss the noncommutative situation. First, we show that an operator system $S$ has \textbf{complete positive generation}, meaning $M_n(S)^\sa=M_n(S)^+-M_n(S)^+$ for all $n\ge 1$, if and only if $S$ is positively generated at the first level. In contrast to the classical situation, complete positive generation need not imply complete $\alpha$-generation. In Example \ref{eg:min_l1}, we give an example of a matrix ordered operator space which is positively generated but not completely $\alpha$-generated for any $\alpha>0$.

One might also consider the following weaker property. Call an ordered Banach space $E$ \textbf{approximately positively generated} if $E^+-E^+$ is dense in $E$. Note that even though the positive cone $E^+$ is closed, it need not be the case that $E^+-E^+$ is closed, even when $E$ is an operator space, as the following example shows.

\begin{example}\label{eg:approximately_positively_generated}
Let $S=C([0,1])$, and define $S^+$ to be the closed cone of functions which are both positive and convex. Then $S^+-S^+$ is dense in $S=C([0,1])$, because it contains all $C^2$ functions, but $S^+-S^+\ne S$, because the convex functions in $S^+$ are automatically differentiable almost everywhere on the interior $(0,1)$. So, $S$ is an ordered Banach space which is approximately positively generated, but not positively generated. In fact, $S$ is an operator system. Indeed, if we let
\[
K=\{\phi\in S^\ast\mid \|\phi\|\le 1\text{ and }\phi(S^+)\subseteq [0,\infty)\}
\]
be the classical quasistate space of $K$, then since every probability measure on $[0,1]$ lies in $K$, the natural map
\[
S\to A(K)
\]
into the continuous affine functions on $K$ is isometric and order isomorphic. That is, $S$ is isometrically order isomorphic to a nonunital function system, and so inherits an operator system structure.
\end{example}

There are many examples of the same kind as Example \ref{eg:approximately_positively_generated}. It suffices to take any function system $S$, and equip it with a new closed positive cone $P\subseteq S^+$ for which $P-P$ is not closed. In a private correspondence, Ken Davidson suggested another example in which $S=\C\oplus c_0$ is equipped with the new positive cone
\[
P=
\Big\{(t,(x_n)_{n\ge 1})\in \C\oplus c_0\mid t\ge 0, (x_n)_{n\ge 1}\ge 0, \text{ and }\sum_{n=1}^\infty x_n \le t\Big\}.
\]
Here, again $P-P$ is dense and not closed in $S$.

\begin{proposition}\label{prop:separates points}
Let $S$ be an operator system with quasistate space $K \subseteq S^\ast$. Then $S$ is approximately positively generated if and only if $S^+$ separates points in $K$.
\end{proposition}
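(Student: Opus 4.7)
The plan is to handle the two directions separately, with the forward implication essentially immediate and the reverse requiring a Hahn--Banach plus Jordan decomposition argument. For the forward direction, I assume $S = \overline{S^+ - S^+}$. If $\phi, \psi \in K$ agree on $S^+$, then by linearity they agree on $S^+ - S^+$, and by continuity together with density they agree on all of $S$; hence $\phi = \psi$, showing that $S^+$ separates points in $K$.

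For the reverse direction I argue the contrapositive. Suppose $\overline{S^+ - S^+}$ is a proper closed subspace of $S$. Then Hahn--Banach produces a nonzero $\theta \in S^\ast$ vanishing on $S^+ - S^+$, hence on $S^+$. Decomposing $\theta = \theta_1 + i \theta_2$ into its self-adjoint real and imaginary parts (each of which automatically vanishes on the self-adjoint set $S^+$), I may replace $\theta$ with a nonzero self-adjoint functional still vanishing on $S^+$. The remaining task is to express $\theta$ as a difference of two positive functionals on $S$; after a common rescaling, this will yield two distinct elements of $K$ agreeing on $S^+$, contradicting the separation hypothesis.

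To produce the positive decomposition I would invoke Werner's definition of an operator system: $S$ admits a completely isometric and complete order embedding $S \hookrightarrow B(H)$ for some Hilbert space $H$. Extend $\theta$ by Hahn--Banach to a bounded self-adjoint functional $\tilde\theta$ on $B(H)$ (averaging against the involution if needed to preserve self-adjointness), and apply the classical Jordan decomposition in the C*-algebra dual $B(H)^\ast$ to write $\tilde\theta = \tilde\theta^+ - \tilde\theta^-$ with $\tilde\theta^\pm \geq 0$. The restrictions $\tilde\theta^\pm|_S$ are positive on $S$ because $S^+ \subseteq B(H)^+$, and rescaling by any $c \geq \max(\|\tilde\theta^+|_S\|, \|\tilde\theta^-|_S\|)$ places them in $K$; they are distinct since their difference $\theta/c$ is nonzero, and they agree on $S^+$ since $\theta$ vanishes there.

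The one step that does real work is the Jordan-type decomposition of self-adjoint functionals on a (possibly nonunital) operator system, which is not internal to $S$ but imported from the ambient C*-algebra $B(H)$ via the Werner embedding. Everything else is formal: Hahn--Banach separation in the Banach space $S$, averaging to preserve the $\ast$-structure, and linearity/continuity to propagate agreement from $S^+$ to the closure $\overline{S^+ - S^+}$.
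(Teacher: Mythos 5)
Your proof is correct and follows essentially the same route as the paper's: Hahn--Banach separation to produce a nonzero self-adjoint functional annihilating $S^+ - S^+$, a Jordan decomposition of that functional into a difference of positive functionals, and a rescaling to obtain two distinct quasistates agreeing on $S^+$. The only difference is that you explicitly justify the Jordan decomposition step by extending through the Werner embedding $S \hookrightarrow B(H)$ and restricting the classical C*-algebraic decomposition back to $S$, whereas the paper simply invokes the Hahn--Jordan theorem for $S^\ast$; your version supplies a detail the paper leaves implicit.
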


\begin{proof}
If $S$ is densely spanned by its positives, then the positives must separate points in $K$. Conversely, suppose that $S$ is not positively generated. Then there exists an element $x \in S^\sa \setminus \overline{(S^+ - S^+)}$. By the Hahn-Banach Separation Theorem, there is a self-adjoint linear functional $\phi \in S^\ast$ so that for all $y \in S^+ - S^+$ we have 
\begin{align*}
	\phi(x) < \phi(y).
\end{align*}
But since $S^+-S^+$ is a real vector space, this implies that $\phi$ is identically zero on $S^+ - S^+$. Moreover, by the Hahn-Jordan decomposition theorem there are positive functionals $\phi^+, \phi^- \in E^d$ with $\phi = \phi^+-\phi^-$. Since $\phi(x) < 0$, the functionals $\phi^+$ and $\phi^-$ are necessarily distinct, but they are equal on $S^+-S^+$ and hence on $S^+$. Normalizing $\phi^\pm$ to obtain quasistates shows that $S^+$ does not separate quasistates.
\end{proof}

\begin{remark}
The Hahn-Jordan decomposition theorem ensures that, as an ordered vector space, the dual space $S^\ast$ is always positively generated.
\end{remark}

By the following result, if $S$ is positively generated then so are each of its matrix levels $M_n(S)$. Using Kadison Duality, each level $M_n(S)$ can itself be viewed as a function system by forgetting the rest of the matrix order, and so Proposition \ref{prop:dualizable_commutative} implies that each $M_n(S)$ is $\alpha_n$-generated for some $\alpha_n>0$. However, in order for $S$ to be dualizable, we would need the sequence $(\alpha_n)$ to be bounded.

\begin{proposition}\label{prop:positively_generated}
If $S$ is positively generated, then so is $M_n(S)$ for each $n$. That is, a positively generated operator system is automatically completely positively generated.
\end{proposition}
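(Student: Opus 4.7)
The plan is to take an arbitrary $x \in M_n(S)^{\sa}$ and decompose it into diagonal and symmetric off-diagonal pieces, showing each piece lies in $M_n(S)^+ - M_n(S)^+$. Since $M_n(S)^+$ is a convex cone, this set is a real linear subspace of $M_n(S)^{\sa}$, and it suffices to treat each piece separately. Writing
$$x = \sum_{i=1}^n E_{ii} \otimes x_{ii} + \sum_{1 \le i < j \le n} \bigl(E_{ij} \otimes x_{ij} + E_{ji} \otimes x_{ij}^*\bigr),$$
self-adjointness of $x$ forces $x_{ii} \in S^{\sa}$ for each $i$.

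For each diagonal summand, positive generation of $S$ yields $x_{ii} = a_i - b_i$ with $a_i, b_i \in S^+$, and then $E_{ii} \otimes x_{ii} = E_{ii} \otimes a_i - E_{ii} \otimes b_i$ lies in $M_n(S)^+ - M_n(S)^+$: for any $s \in S^+$, we have $E_{ii} \otimes s = e_i s e_i^* \in M_n(S)^+$ by closure of the matrix-order cone under scalar conjugation.

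For each off-diagonal summand, split $x_{ij} = u + iv$ with $u, v \in S^{\sa}$. Applying positive generation of $S$ to $u$ and $v$ reduces the task to showing that, for any $a \in S^+$, both $(E_{ij} + E_{ji}) \otimes a$ and $i(E_{ij} - E_{ji}) \otimes a$ lie in $M_n(S)^+ - M_n(S)^+$. Let $V = (e_i \mid e_j) \in M_{n,2}$. For the real part, the matrix $\begin{pmatrix} a & a \\ a & a \end{pmatrix} = \alpha^* a \alpha \in M_2(S)^+$ with $\alpha = (1, 1)$; applying the positivity-preserving map $Y \mapsto V Y V^*$ yields $(E_{ii} + E_{jj} + E_{ij} + E_{ji}) \otimes a \in M_n(S)^+$, and subtracting the manifestly positive $(E_{ii} + E_{jj}) \otimes a$ isolates $(E_{ij} + E_{ji}) \otimes a$ as a difference of positives. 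The imaginary case is identical with $\alpha = (1, i)$, producing the positive matrix $\begin{pmatrix} a & ia \\ -ia & a \end{pmatrix} \in M_2(S)^+$, whose off-diagonal part dilates via $V$ to $i(E_{ij} - E_{ji}) \otimes a$ upon subtracting the diagonal $(E_{ii} + E_{jj}) \otimes a$.

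Summing the contributions from all summands and collecting positives and negatives gives a decomposition $x = y - z$ with $y, z \in M_n(S)^+$, establishing complete positive generation. There is no serious obstacle — the argument is just a routine unpacking of positivity under scalar-matrix conjugation — but care is needed to use the $(1, i)$-conjugation rather than $(1, 1)$ when handling the imaginary off-diagonal term, and to keep track of the diagonal terms introduced by the dilation.
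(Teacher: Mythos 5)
Your proof is correct, but it takes a genuinely different route from the paper's. The paper first proves a lemma that every finite dimensional positively generated operator system contains a matrix order unit, then, given $X\in M_n(S)^\sa$, passes to the finite dimensional subsystem $S_X$ spanned by the positive parts of the entries of $X$ and writes $X$ as a difference of the two positives $(\lambda 1_n\otimes e_X\pm X)/2$; the lemma itself is established by fixing a concrete representation $S\hookrightarrow B(H)$ and verifying $\lambda_X e_n+X\ge 0$ by an explicit inner-product estimate of the form $\langle \real x_{ij}^{+}(a_i+a_j),a_i+a_j\rangle+\langle \real x_{ij}^{-}(a_i-a_j),a_i-a_j\rangle\ge 0$. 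Your argument instead observes that $M_n(S)^+-M_n(S)^+$ is a real subspace and exhibits each diagonal and off-diagonal piece of $x$ directly as a difference of positives via conjugation by scalar matrices: the identities $\alpha^\ast a\alpha\in M_2(S)^+$ for $\alpha=(1,1)$ and $\alpha=(1,i)$, followed by the compression $Y\mapsto VYV^\ast$, are exactly the matrix-convexity axioms of the cone, so no concrete Hilbert space representation is needed. What your approach buys is generality and economy: it works verbatim for any (semi-)matrix ordered operator space, not just operator systems, and it avoids the finite dimensional reduction entirely. What the paper's approach buys is the intermediate lemma -- that a finite dimensional positively generated operator system has a matrix order unit, with an explicit bound $\lambda_X$ controlled by the coefficients of $X$ in a positive basis -- which is a stronger structural statement than mere positive generation of $M_n(S)$, though it is not used elsewhere in the paper. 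The one point in your write-up worth making explicit is that $(E_{ii}+E_{jj})\otimes a$ is positive because $\operatorname{diag}(a,a)=1_2\otimes a$ lies in $M_2(S)^+$ (the cone is closed under direct sums) before compressing by $V$; with that noted, the bookkeeping of the dilation-introduced diagonal terms is complete and the argument closes.
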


Before proving this, we will need a technical lemma which proves a much stronger statement in the finite dimensional setting.

\begin{lemma}
If $S$ is a finite dimensional and positively generated operator system, then it contains a matrix order unit.
\end{lemma}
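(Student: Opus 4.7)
The plan is to construct an explicit matrix order unit in $S$ from a basis of $S^\sa$ consisting of positive elements; the matrix convex cone axioms of the $M_n(S)^+$ then supply everything needed at higher matrix levels.

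Since $\dim_\R S^\sa < \infty$ and positive generation gives $S^\sa = S^+ - S^+$, the cone $S^+$ spans $S^\sa$ over $\R$. A standard linear-algebra argument---decompose any real basis of $S^\sa$ as differences of elements of $S^+$ and then extract a basis from the resulting finite list---produces $e_1,\dots,e_d \in S^+$ that form a real basis of $S^\sa$. Since $S = S^\sa \oplus iS^\sa$ as real vector spaces, the same $e_i$ are a complex basis of $S$. I set $e := \sum_i e_i \in S^+$ and claim this is an archimedean matrix order unit.

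Fix $n \ge 1$ and $x \in M_n(S)^\sa$. Expanding in the basis yields a unique decomposition $x = \sum_i A_i \otimes e_i$ with $A_i \in M_n$; self-adjointness of $x$, combined with $e_i^\ast = e_i$, forces each $A_i \in M_n^\sa$. Next I claim that for any $A \in M_n^+$ and any $f \in S^+$, the tensor $A \otimes f$ lies in $M_n(S)^+$: closure of the matrix convex cones under direct sums yields $1_n \otimes f \in M_n(S)^+$, and writing $A = \beta^\ast \beta$ for $\beta \in M_n$, closure under compressions yields $A \otimes f = \beta^\ast(1_n \otimes f)\beta \in M_n(S)^+$. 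Applying this to each summand with $t \ge \max_i \|A_i\|$---so that $t\cdot 1_n - A_i \in M_n^+$---we obtain
\[
t(1_n \otimes e) - x \;=\; \sum_{i=1}^d (t\cdot 1_n - A_i) \otimes e_i \;\in\; M_n(S)^+,
\]
i.e.\ $x \le t(1_n \otimes e)$. Hence $1_n \otimes e$ is an order unit in $M_n(S)^\sa$ at every level $n$, and the archimedean property is immediate from norm-closedness of $M_n(S)^+$.

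The only substantive step is the tensor-of-positives observation, which rests only on the matrix convex cone axioms (closure under direct sums and compressions) and needs no concrete embedding $S \hookrightarrow B(H)$. Conceptually, positive generation at the first level combined with finite dimensionality furnishes a basis in which every matrix-level self-adjoint element splits along positive directions; this transports first-level positive information to every matrix level automatically, and in particular side-steps the complete positive generation of Proposition \ref{prop:positively_generated} that this lemma is used to prove.
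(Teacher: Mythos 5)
Your proof is correct, and at the matrix levels it takes a genuinely different and cleaner route than the paper's. Both arguments begin identically: extract a basis $e_1,\dots,e_d$ of $S^\sa$ from the spanning set $S^+$ and set $e=\sum_i e_i$. The paper then handles $M_n(S)$ by decomposing each entry $x_{ij}$ into positive and negative real and imaginary parts, fixing a concrete representation $S\hookrightarrow B(H)$, and verifying $\lambda_X e_n + X\ge 0$ through a lengthy inner-product computation that treats the diagonal, real off-diagonal, and imaginary off-diagonal contributions separately. You instead exploit the fact that $\{e_i\}$ is also a complex basis of $S$, expand $x=\sum_i A_i\otimes e_i$ with self-adjoint matrix coefficients $A_i\in M_n^\sa$, and reduce everything to the single observation that $A\otimes f\in M_n(S)^+$ for $A\in M_n^+$ and $f\in S^+$ --- which follows either from the matrix convex cone axioms as you argue ($1_n\otimes f$ is a direct sum of copies of $f$, and $A\otimes f=\beta^\ast(1_n\otimes f)\beta$ for $A=\beta^\ast\beta$) or immediately in any concrete representation. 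This gives $t(1_n\otimes e)-x=\sum_i(t1_n-A_i)\otimes e_i\ge 0$ for $t\ge\max_i\|A_i\|$ in one line, avoiding the entrywise bookkeeping and the completing-the-square estimates entirely; what the paper's computation buys in exchange is only that it never needs the uniqueness of the matrix-coefficient expansion, which in finite dimensions is automatic anyway. Your closing remark that archimedeanity follows from norm-closedness of the cones is a harmless bonus beyond what the lemma asserts or what its application requires.
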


\begin{proof}
Since $S$ is positively generated, then it admits a basis $B= \{ p_1,\ldots , p_m \}$ consisting of positive elements. We claim that $e:=\sum_{i=1}^m p_i$ is an order unit. For any $x$ in $S^\sa$, we can write $x$ uniquely as a real linear combination
\[
x = \sum_{i=1}^m \alpha_ip_i,
\]
and we define $\lambda_x:=\max\{1,|\alpha_1|,\ldots,|\alpha_m|\}$. It is clear that $\lambda_x e \pm x$ are positive in $S$, so $e$ is an order unit.

Next we let $n \ge 0$ and show that $e_n := e \otimes I_n$ is an order unit for $M_n(S)$, so fix an $X=(x_{ij})_{i,j=1}^n \in M_n(S)^\sa$. Since $E$ is positively generated, for every $i \le j$ we can decompose the corresponding entries of $X$ as
\begin{align*}
	x_{ij} = \real x_{ij}^+ - \real x_{ij}^- + i(\im x_{ij}^+ - \im x_{ij}^-).
\end{align*}
To find a large enough coefficient of $e_n$ to dominate $X$, we let 
\begin{align*}
\lambda_X := \lambda_d + \lambda_\real + \lambda_\im.
\end{align*}
Where $\lambda_d := \max\{\lambda_{x_{ii}}\}_{i=1}^n$, $\lambda_\real := \sum_{i<j}\lambda_{\real x_{ij}^+ + \real x_{ij}^-}$, and $\lambda_\im := \sum_{i<j}\lambda_{\im x_{ij}^+ + \im x_{ij}^-}$. Note that it makes sense to write $x_{ii}^\pm$ since the $x_{ii}$ must all be self-adjoint, as they lie on the diagonal of $X=X^*$.

Fix a concrete representation $S \hookrightarrow B(H)$ of $S$ as a norm closed and $\ast$-closed subspace of the bounded operators on a Hilbert space. We'll show that $\lambda_X e_n + X \ge 0$ concretely using inner products. Take an arbitrary vector $a = (a_i)_{i=1}^n \in H^n = \bigoplus_{i=1}^n H$, and compute
\begin{align*}
	\langle (\lambda_X e_n + X) a, a  \rangle
	= & \lambda_X \langle e_n a , a \rangle + \langle X a ,a\rangle \\
	= & \lambda_X \sum_{i=1}^n \langle ea_i,a_i \rangle
	+ \sum_{i=1}^n \langle x_{ii} a_i , a_i \rangle
	+ \sum_{i<j} \langle  x_{ij} a_j , a_i \rangle
	+ \langle x_{ji} a_i , a_j \rangle \\
	= & \lambda_X \sum_{i=1}^n \langle ea_i,a_i \rangle
	+ \sum_{i=1}^n \langle x_{ii} a_i , a_i \rangle
	+ \sum_{i<j} 2\real \langle  x_{ij} a_j , a_i \rangle \\
	= & \left( \lambda_d \sum_{i=1}^n \langle ea_i,a_i \rangle
	+ \sum_{i=1}^n \langle x_{ii} a_i , a_i \rangle \right) \\
	+ & \left( (\lambda_\real + \lambda_\im)\sum_{i=1}^n \langle ea_i,a_i \rangle +  \sum_{i<j} 2\real \langle  x_{ij} a_j , a_i \rangle \right) \\
	= & \left( \lambda_d \sum_{i=1}^n \langle ea_i,a_i \rangle
	+ \sum_{i=1}^n \langle x_{ii} a_i , a_i \rangle \right) \\
	+ & \left( \lambda_\real\sum_{i=1}^n \langle ea_i,a_i \rangle +  2\sum_{i<j} \real \langle  \real x_{ij} a_j , a_i \rangle \right) \\
	+ & \left( \lambda_\im\sum_{i=1}^n \langle ea_i,a_i \rangle -  2\sum_{i<j} \im \langle  \im x_{ij} a_j , a_i \rangle \right).
\end{align*}
For the remainder of the proof, we will show that each of the three terms above is non-negative. Starting with the first term,
\begin{align*}
	\lambda_d \sum_{i=1}^n \langle ea_i,a_i \rangle
	+ \sum_{i=1}^n \langle x_{ii} a_i , a_i \rangle 
	= & \sum_{i=1}^n \langle (\lambda_d e + x_{ii}) a_i, a_i \rangle \\
	\geq & \sum_{i=1}^n \langle (\lambda_{x_{ii}} e + x_{ii}) a_i, a_i \rangle \\
	\ge & \ 0,
\end{align*}
where the last inequality follows from the first paragraph of the proof.

To prove that the second term is non-negative, note
\begin{align*}
	& \lambda_\real\sum_{k=1}^n \langle ea_k,a_k \rangle +  2\sum_{i<j} \real \langle  \real x_{ij} a_j , a_i \rangle \\
	&\quad = \sum_{i<j}(\lambda_{\real x_{ij}^+ + \real x_{ij}^-}) \sum_{k=1}^n \langle ea_k,a_k \rangle +  2\sum_{i<j} \real \langle  \real x_{ij} a_j , a_i \rangle \\\
	&\quad = \sum_{i<j} (\lambda_{\real x_{ij}^+ + \real x_{ij}^-}) \sum_{k=1}^n \langle ea_k,a_k \rangle +  2\real \langle  \real x_{ij} a_j , a_i \rangle.
\end{align*}
We now show that for each pair $i<j$, the corresponding summand is non-negative:
\begin{align*}
	& (\lambda_{\real x_{ij}^+ + \real x_{ij}^-}) \sum_{k=1}^n \langle ea_k,a_k \rangle +  2\real \langle  \real x_{ij} a_j , a_i \rangle \\
	&\quad = (\lambda_{\real x_{ij}^+ + \real x_{ij}^-}) \sum_{k=1}^n \langle ea_k,a_k \rangle +  2\real \langle  (\real x_{ij}^+ - \real x_{ij}^-) a_j , a_i \rangle \\
	&\quad \ge (\lambda_{\real x_{ij}^+ + \real x_{ij}^-}) \langle ea_i,a_i \rangle
	+ (\lambda_{\real x_{ij}^+ + \real x_{ij}^-}) \langle ea_j,a_j \rangle 
	+  2\real \langle  (\real x_{ij}^+ - \real x_{ij}^-) a_j , a_i \rangle \\
	&\quad \ge \left( \langle \real x_{ij}^+ a_i, a_i \rangle
	+ \langle \real x_{ij}^+ a_j, a_j \rangle
	+ 2\real \langle \real x_{ij}^+ a_j, a_i \rangle \right) \\
	&\quad \quad + \left( \langle \real x_{ij}^- a_i, a_i \rangle
	+ \langle \real x_{ij}^- a_j, a_j \rangle
	- 2\real \langle \real x_{ij}^- a_j, a_i \rangle \right) \\
	&\quad = \langle \real x_{ij}^+ (a_i + a_j), a_i + a_j \rangle
	 + \langle \real x_{ij}^- (a_i - a_j), a_i - a_j \rangle \\
	&\quad \ge 0.
\end{align*}
The last inequality follows since each $\real x_{ij}^\pm$ is a positive operator. The proof that the third term is non-negative is similar.
\end{proof}

We now prove Proposition \ref{prop:positively_generated}

\begin{proof}[Proof of Proposition~\ref{prop:positively_generated}]

To show $M_n(S)$ is positively generated, fix $X=(x_{ij})_{i,j=1}^n \in M_n(S)^\sa$. Since $S$ is positively generated, each $x_{ij}$ can be written as a linear combination of four positives $\real x_{ij}^+$, $\real x_{ij}^-$, $\im x_{ij}^+$, and $\im x_{ij}^-$. Let $S_X$ denote the linear span of these positives, as $i$ and $j$ range from $1$ to $n$. Since $S_X$ is a finite dimensional operator system, by the previous lemma there is a matrix order unit $e_X \in S_X$ and in particular there is a constant $\lambda > 0$ so that both $\lambda 1_n \otimes e_X \pm X \ge 0 $. Since $X = (\lambda 1_n \otimes e_X + X)/2 - (\lambda 1_n \otimes e_X - X)/2$ and all entries are ultimately in $S$, this shows $M_n(S)$ is positively generated.
\end{proof}

So, complete positive generation coincides with positive generation at the first level. However, the following example shows that for matrix ordered operator spaces, positive generation at all matrix levels does not imply complete $\alpha$-generation for any $\alpha$. We do not know if this example is an operator system.

\begin{example}\label{eg:min_l1}
Any Banach space $E$ has a unique maximal and minimal system of $L^\infty$-matrix norms which give $E$ an operator space structure and restrict to the norm on $E$ at the first matrix level. We denote the resultant operator spaces by $\max(E)$ and $\min(E)$, respectively. There are natural operator space dualities $\max(E)^\ast=\min(E^\ast)$ and $\min(E)^\ast=\max(E)^\ast$ \cite[Section 3.3]{effros2022theory}.

We will consider the Banach space $\ell^1$ and its dual $\ell^\infty$. Because $\ell^\infty$ is a commutative $C^\ast$-algebra, we have $\ell^\infty=\min(\ell^\infty)$ \cite[Proposition 3.3.1]{effros2022theory}. The embedding $\ell^1\subseteq (\ell^\infty)^\ast$ gives a matrix ordered operator space structure on $\ell^1$, which coincides with the max norm $\ell^1=\max(\ell^1)$. Using the natural linear identifications
\[
M_n(\ell^\infty)=\ell^\infty(\N,M_n)\quad\text{and}\quad
M_n(\ell^1)=\ell^1(\N,M_n),
\]
the resultant positive cones in $\ell^\infty$ and $\ell^1$ consist of those sequences of matrices which are positive in each entry.

We will consider the minimal operator space $\min(\ell^1)$ equipped with the same matrix ordering as $\ell^1=\max(\ell^1)$. Because the matrix cones $M_n(\ell^1)^+=\ell^1(\N,M_n^+)$ are closed in the topology of pointwise weak-$\ast$ convergence, which is weaker than the topology induced by either the minimal or maximal norms on $M_n(\ell^1)$, the matrix cones $M_n(\ell^1)^+$ are closed in the minimal norm topology. Thus $\min(\ell^1)$ has the structure of a matrix ordered operator space. Because $M_n$ is $1$-generated, it follows that each $M_n(\min(\ell^1))=\ell^1(\N,M_n)$ is positively generated, so $\min(\ell^1)$ is completely positively generated.

However, we will show that $\min(\ell^1)$ is not completely $\alpha$-generated for any $\alpha>0$. We will do so using Proposition \ref{prop:alpha_generated_dual}, by proving the dual matrix ordered operator space $\min(\ell^1)^\ast = \max(\ell^\infty)$ (equipped with the usual matrix ordering on $\ell^\infty$) is not completely $\alpha$-normal for any $\alpha>0$. Since $\ell^\infty$ is infinite dimensional, the minimal and maximal matrix norms on $\ell^\infty$ are not completely equivalent \cite[Theorem 14.3]{paulsen2002completely}. Thus there is a sequence $x_k\in M_{n_k}(\ell^\infty)$ for which 
\[
\|x_k\|_\text{min} \le 1 \quad\text{and}\quad
\|x_k\|_\text{max}\ge k.
\]
In the C*-algebras $M_{n_k}(\ell^\infty)$, we can write each $x_k$ as a linear combination
\[
x_k =
(\Real x_k)^+ - (\Real x_k)^- + i (\Imag x_k)^+ -i(\Imag x_k)^-
\]
of positive elements $(\Real x_k)^\pm,(\Imag x_k)^\pm$ of min-norm at most $1$. Since $\|x_k\|_\text{max}>k$, by suitably choosing $y_k\in \{(\Real x_k)^\pm,(\Imag x_k)^\pm\}$, we can obtain a sequence of positive elements $y_k\in M_{n_k}(\ell^\infty)^+$ with
\[
\|y_k\|_\text{min} \le 1 \quad\text{and}\quad
\|y_k\|_\text{max}> k/4.
\]
Since the minimal norm on $M_{n_k}(\ell^\infty)$ is just the usual C*-algebra norm, we have $0\le y_k\le 1_{M_{n_k}(\ell^\infty)}$. Because the maximal norms satisfy the $L^\infty$-matrix norm identity, we have $\|1_{M_{n_k}(\ell^\infty)}\|_\text{max}=1$. Thus
\[
0\le y_k\le 1_{M_{n_k}(\ell^\infty)}, \quad \|1_{M_{n_k}(\ell^\infty)}\|_\text{max}\le 1,\quad \text{and} \quad
\|y_k\|_\text{max}> k/4
\]
for all $k\in\N$. So, $\ell^\infty$ is not completely $k/4$-normal, and taking $k\to \infty$ shows that $\ell^\infty$ cannot be completely $\alpha$-normal for any $\alpha>0$.
\end{example}


Example \ref{eg:min_l1} is a minimal example of this kind. One cannot restrict to the finite dimensional spaces $\ell^1_d$ and $\ell^\infty_d=(\ell^1_d)^\ast$ because the maximal and minimal norms on a finite dimensional Banach space are completely equivalent \cite[Theorem 14.3]{paulsen2002completely}, and so $\max(\ell^1_d)\cong \min(\ell^1_d)$ is a dualizable quasi-operator system.

\section{Permanence properties}\label{sec:permanence}

If $K=\coprod_{n\ge 1}K_n$ and $L=\coprod_{n\ge 1} L_n$ are compact nc convex sets, we denote by
\[
K\times L :=
\coprod_{n\ge 1} K_n\times L_n
\]
their levelwise cartesian product. In \cite{humeniuk2021jensen}, it was shown that $A(K\times L)$ is the categorical coproduct of the unital operator systems $A(K)$ and $A(L)$ in the category of unital operator systems with ucp maps as morphisms. The following result will let us assert a similar result in the pointed context, for nonunital operator systems.

\begin{proposition}\label{prop:product_affines}
Let $(K,z)$ and $(L,w)$ be pointed compact nc convex sets. Then $(K\times L,(z,w))$ is pointed, and there is a vector space isomorphism
\[
A(K\times L,(z,w))\cong
A(K,z)\oplus A(L,w).
\]
\end{proposition}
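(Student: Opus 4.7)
The plan is to construct mutually inverse linear maps between $A(K \times L, (z,w))$ and $A(K,z) \oplus A(L,w)$, and then leverage the resulting decomposition to verify pointedness.

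The forward map $\Phi(a,b) := a \circ \pi_K + b \circ \pi_L$, where $\pi_K, \pi_L$ are the coordinate projections, visibly lands in $A(K \times L, (z,w))$: the projections are continuous affine nc maps, so $a \circ \pi_K, b \circ \pi_L \in A(K \times L)$, and $a(z) = b(w) = 0$ gives the vanishing condition at $(z,w)$.

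For the inverse, the key step is the decomposition identity
\[
c(x,y) = c(x,\, 1_n \otimes w) + c(1_n \otimes z,\, y), \qquad (x,y) \in (K \times L)_n,
\]
for every $c \in A(K \times L, (z,w))$. I would prove it by applying $c$ to the classical convex combination
\[
\tfrac{1}{2}(x,y) + \tfrac{1}{2}(1_n \otimes z,\, 1_n \otimes w) = \tfrac{1}{2}(x,\, 1_n \otimes w) + \tfrac{1}{2}(1_n \otimes z,\, y)
\]
in $(K \times L)_n$, and using that $c(1_n \otimes z,\, 1_n \otimes w) = 1_n \otimes c(z,w) = 0$ since affine nc functions respect direct sums.

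With this, define $\Psi(c) := (a,b)$ by $a(x) := c(x,\, 1_n \otimes w)$ and $b(y) := c(1_n \otimes z,\, y)$ for $x \in K_n$, $y \in L_n$. The affineness of $a$ reduces to the identity $\sum_i \alpha_i^\ast (1_{n_i} \otimes w) \alpha_i = 1_n \otimes w$ whenever $\sum_i \alpha_i^\ast \alpha_i = 1_n$, which lets one promote any nc convex combination in $K$ to one in $K \times L$ with second coordinate of the form $1_{\cdot}\otimes w$; applying $c$ then yields the required identity. Continuity is immediate, $a(z) = c(z,w) = 0$ gives $a \in A(K,z)$, and similarly $b \in A(L,w)$. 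The decomposition identity yields $\Phi \circ \Psi = \mathrm{id}$, while $\Psi \circ \Phi = \mathrm{id}$ follows from a direct calculation using $a(z) = b(w) = 0$.

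For pointedness, given $\phi \in \mathcal{QS}(A(K \times L, (z,w)))_n$, the precompositions $a \mapsto a \circ \pi_K$ and $b \mapsto b \circ \pi_L$ are completely isometric and completely positive embeddings, so $\phi_1(a) := \phi(a \circ \pi_K) \in \mathcal{QS}(A(K,z))_n$ and $\phi_2(b) := \phi(b \circ \pi_L) \in \mathcal{QS}(A(L,w))_n$. Pointedness of $(K,z)$ and $(L,w)$ furnishes $x \in K_n$ and $y \in L_n$ realizing these functionals, and the decomposition $c = a \circ \pi_K + b \circ \pi_L$ (with $a,b$ extracted from $c$ as above) then gives
\[
\phi(c) = c(x, 1_n \otimes w) + c(1_n \otimes z, y) = c(x,y).
\]
The main obstacle is spotting the convex decomposition identity that decouples the two factors; once in hand, the remaining verifications are formal.
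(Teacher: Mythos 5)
Your proposal is correct and follows essentially the same route as the paper: the same halving convex-combination trick to decouple the coordinates (the paper translates to $z=w=0$ first and writes $c(x,y)=2c(x/2,y/2)=c(x,0)+c(0,y)$, which is your identity in disguise), and the same pointedness argument via pulling a quasistate back to quasistates on each factor and using pointedness of $(K,z)$ and $(L,w)$. The only difference is that you work with the general basepoints $1_n\otimes z$, $1_n\otimes w$ directly rather than translating, which costs a small extra verification of affineness but changes nothing substantive.
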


\begin{proof}
We will prove the result in the special case when $z=0$ and $w=0$ in the ambient spaces containing $K$ and $L$. The general case follows by translation. Define a linear map $A(K,z)\oplus A(L,w)\to A(K\times L,(z,w))$ by $(a,b)\mapsto a\oplus b$, where $(a\oplus b)(x,y):=a(x)+b(y)$ for $x\in K$, $y\in L$. Since $a(z)=0=b(w)$, it is easy to see that this map is injective. Given $c\in A(K\times L,(0,0))$, let $a(x)=c(x,0)$ and $b(y)=c(0,y)$ for $x\in K$, $y\in L$. Then since $c(0,0)=0$,
\begin{align*}
c(x,y) &=
2c\left(\frac{x}{2},\frac{y}{2}\right) \\ &=
2\left(\frac{c(x,0)}{2}+\frac{c(0,y)}{2}\right) \\ &=
a(x)+b(y)=(a\oplus b)(x,y).
\end{align*}
This proves that $A(K,0)\oplus A(L,0)\to A(K\times L,(0,0))$ is a linear isomorphism.

Now, it will follow from this isomorphism that $(K\times L,(z,w))$ is pointed. Let $\rho:A(K\times L,(z,w))\to M_n$ be any nc quasistate. Then
\[
\phi(a)=\rho(a\oplus 0)\quad\text{and}\quad\psi(b)=\rho(0\oplus b)
\]
define nc quasistates on $A(K,0)$ and $A(L,0)$, respectively. Because $(K,0)$ and $(L,0)$ are pointed, all nc quasistates are point evaluations, so we have $\phi(a)=a(x)$ and $\phi(b)=b(y)$ for some $(x,y)\in (K\times L)_n$ and all $a\in A(K,0)$, $b\in A(L,0)$. From linearity, it follows that $\rho$ is just point evaluation at $(x,y)$, so $(K\times L,(0,0))$ is pointed.
\end{proof}

\begin{definition}\label{def:coproduct}
Let $S$ and $T$ be operator systems with respective nc quasistate spaces $(K,0)$ and $(L,0)$. We define the \textbf{operator system coproduct} to be the vector space $S\oplus T$ equipped with the operator system structure such that
\[
S\oplus T\cong
A(K,0)\oplus A(L,0)\cong
A(K\times L,(0,0))
\]
is a completely isometric complete order isomorphism.
\end{definition}

Explicitly, the matrix norms on $S\oplus T$ satisfy
\[
\|(x,y)\|_{M_n(S\oplus T)}=
\sup\{\|\phi_n(x)+\psi_n(y)\|\mid \phi\in K,\psi\in L\}
\]
for $(x,y)\in M_n(S\oplus T)=M_n(S)\oplus M_n(T)$. The matrix cones just identify $M_n(S\oplus T)^+=M_n(S)^+\oplus M_n(T)^+$.

\begin{proposition}
The bifunctor $(S,T)\mapsto S\oplus T$ is the categorical coproduct in the category of operator systems with ccp maps as morphisms. That is, given any operator system $R$ and ccp maps $\phi:S\to R$ and $\psi:T\to R$, the linear map $\phi\oplus \psi:S\oplus T\to R$ is ccp.
\end{proposition}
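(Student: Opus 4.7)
The plan is to leverage the duality between operator systems and pointed compact nc convex sets (Theorem \ref{thm:nonunital_nc_kadison}) to reduce everything to evaluating against quasistates. I will argue directly for complete positivity and complete contractivity, using the definitional formula for the norm on $S \oplus T$ together with the fact that $R \cong A(\mathcal{QS}(R), 0)$ completely isometrically and completely order isomorphically.

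First, let $K = \mathcal{QS}(S)$ and $L = \mathcal{QS}(T)$. By Definition \ref{def:coproduct} we have $M_n(S \oplus T)^+ = M_n(S)^+ \oplus M_n(T)^+$. So if $(x, y) \in M_n(S \oplus T)^+$, then $x \in M_n(S)^+$ and $y \in M_n(T)^+$, and since $\phi, \psi$ are completely positive,
\[
(\phi \oplus \psi)_n(x,y) = \phi_n(x) + \psi_n(y) \in M_n(R)^+.
\]
This establishes complete positivity.

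Next, for complete contractivity, I invoke that by Theorem \ref{thm:nonunital_nc_kadison} the identification $R \cong A(\mathcal{QS}(R), 0)$ is completely isometric, so for any $r \in M_n(R)$ one has
\[
\|r\|_{M_n(R)} = \sup \bigl\{\|\rho_n(r)\| \,:\, \rho \in \mathcal{QS}(R)_k,\; k \geq 1\bigr\}.
\]
Fix $(x,y) \in M_n(S \oplus T)$ and any $\rho \in \mathcal{QS}(R)_k$. Since $\rho, \phi, \psi$ are each ccp, the compositions $\rho \circ \phi : S \to M_k$ and $\rho \circ \psi : T \to M_k$ are again ccp, hence $\rho \circ \phi \in K_k$ and $\rho \circ \psi \in L_k$. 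Applying the explicit norm formula on $S \oplus T$ recorded after Definition \ref{def:coproduct} gives
\[
\|\rho_n\bigl((\phi \oplus \psi)_n(x,y)\bigr)\| = \|(\rho \phi)_n(x) + (\rho \psi)_n(y)\| \le \|(x,y)\|_{M_n(S \oplus T)}.
\]
Taking the supremum over $\rho$ and $k$ yields $\|(\phi \oplus \psi)_n(x,y)\|_{M_n(R)} \leq \|(x,y)\|_{M_n(S \oplus T)}$, which is complete contractivity.

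There is no real obstacle here; the argument is an assembly of tools already set up in the paper. The only step that requires care is identifying $M_n(R)$-norms with suprema of quasistate evaluations, which is immediate from the nonunital Kadison duality (Theorem \ref{thm:nonunital_nc_kadison}) applied to $R$, together with the pointwise definition of matrix norms on $A(\mathcal{QS}(R), 0)$ analogous to that recorded after Theorem \ref{thm:unital_nc_kadison}. The uniqueness part of the universal property of the coproduct is automatic from the decomposition $(x,y) = (x,0) + (0,y)$ in $S \oplus T$, so no further work is needed.
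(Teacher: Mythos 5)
Your argument is correct and follows the first of the two routes the paper's (very terse) proof indicates, namely the explicit description of the matrix norms and cones on $S\oplus T$, combined with the completely isometric identification $R\cong A(\mathcal{QS}(R),0)$ to test against quasistates. The paper leaves these details to the reader; your write-up simply supplies them, so nothing further is needed.
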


\begin{proof}
This follows either by the explicit description of the matrix norms and order on $S\oplus T$, or by showing that $(K\times L,(0,0))$ is the categorical \emph{product} of $(K,0)$ and $(L,0)$ in the category of pointed compact nc convex sets, and using Theorem \ref{thm:nonunital_nc_kadison}.
\end{proof}

\begin{remark}
The operator space norm on $S\oplus T$ is neither the usual $\ell^\infty$-product nor the $\ell^1$-product of the operator spaces $S$ and $T$. For example, if
\[
K=L=\coprod_{n\ge 1}\{x\in M_n^+\mid 0\le x \le 1_n\}
\]
is the nc simplex generated by $[0,1]$, and $a\in A(K,0)$ is the coordinate function $a(x)=x$, then
\begin{align*}
\|a\oplus a\|_{A(K^2,(0,0))} &= 2 >\|a\oplus a\|_\infty\quad\text{and}\\
\|a\oplus (-a)\|_{A(K^2,(0,0))} &= 1<\|a\oplus a\|_1.
\end{align*}
\end{remark}

\begin{proposition}\label{prop:coproduct_dualizable}
Let $S$ and $T$ be operator systems. If $S$ and $T$ are dualizable, then $S\oplus T$ is dualizable.
\end{proposition}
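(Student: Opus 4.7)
The plan is to apply the characterization of dualizability from Theorem \ref{thm:dualizable_intrinsic}, specifically condition (5), to the product nc quasistate space. By Proposition \ref{prop:product_affines} (or rather the definition of $S \oplus T$), the operator system $S \oplus T$ has pointed nc quasistate space $(K \times L, (0,0))$, where $(K,0)$ and $(L,0)$ are the pointed nc quasistate spaces of $S$ and $T$ respectively. So dualizability of $S \oplus T$ will follow if I can show that $((K\times L) - \R_+(K\times L)) \cap \R_+(K\times L)$ is bounded.

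The key observation is that the relevant nc convex operations respect the levelwise Cartesian product. Writing out the definitions, one has $\R_+(K \times L) = (\R_+ K) \times (\R_+ L)$, and Minkowski differences distribute as $(K \times L) - (\R_+ K \times \R_+ L) = (K - \R_+ K) \times (L - \R_+ L)$, since sums and differences in the ambient space $\mathcal{M}(E_1 \oplus E_2)$ act componentwise. Intersecting these gives
\[
((K \times L) - \R_+(K\times L)) \cap \R_+(K\times L) = \bigl((K - \R_+ K) \cap \R_+ K\bigr) \times \bigl((L - \R_+ L) \cap \R_+ L\bigr).
\]
By Theorem \ref{thm:dualizable_intrinsic} applied to $S$ and $T$ separately, the two factors on the right are bounded (in the appropriate completely bounded sense at each matrix level). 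Hence their product is bounded as a subset of $\mathcal{M}(E_1 \oplus E_2)$ under any reasonable matricial norm on the direct sum.

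More concretely, if $C_S, C_T > 0$ are constants with $(K - \R_+ K) \cap \R_+ K \subseteq C_S K$ and $(L - \R_+ L) \cap \R_+ L \subseteq C_T L$ (from Theorem \ref{thm:dualizable_intrinsic}(4)), then setting $C = \max\{C_S, C_T\}$, any $(x,y)$ in the intersection satisfies $x \in C\,K$ and $y \in C\,L$, hence $(x,y) \in C(K \times L)$. This verifies condition (4) of Theorem \ref{thm:dualizable_intrinsic} for $K \times L$, so $A(K \times L,(0,0)) \cong S \oplus T$ is dualizable.

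The proof is essentially bookkeeping: the only thing requiring care is the clean verification that $\R_+$, Minkowski difference, and intersection all distribute across the levelwise product, and that the distinguished basepoint $(0,0)$ corresponds correctly under the identification from Proposition \ref{prop:product_affines}. No serious obstacle is anticipated; the result is a straightforward consequence of the geometric characterization of dualizability together with the product structure of the nc quasistate space of a coproduct.
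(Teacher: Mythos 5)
Your proposal is correct and follows essentially the same route as the paper: apply the geometric characterization of dualizability (Theorem \ref{thm:dualizable_intrinsic}) to $K\times L$ and observe that $((K\times L)-\R_+(K\times L))\cap \R_+(K\times L)$ is contained in the product of the corresponding sets for $K$ and $L$. (Your stronger claim of equality, and in particular $\R_+(K\times L)=\R_+K\times\R_+L$, does hold here because $0\in K$ and $0\in L$ and both are convex, but only the inclusion the paper states is actually needed.)
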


\begin{proof}[Proof 1]
We will use Theorem \ref{thm:dualizable_intrinsic}. Let the nc quasistate spaces of $S$ and $T$ be $(K,0)$ and $(L,0)$, respectively. Then $(K-\R_+K)\cap \R_+K$ and $(L-\R_+L)\cap \R_+L$ are norm bounded. Checking that
\[
(K\times L-\R_+(K\times L))\cap \R_+(K\times L) \subseteq
((K-\R_+K)\cap \R_+K)\times ((L-\R_+L)\cap \R_+L)
\]
shows that $(K\times L-\R_+(K\times L))\cap \R_+(K\times L)$ is bounded, so $S\oplus T\cong A(K\times L,(0,0))$ is dualizable.
\end{proof}

It is also possible to give a proof of Proposition \ref{prop:coproduct_dualizable} using only Ng's bounded decomposition property, which appears in \ref{thm:dualizable_intrinsic}.(2).

More generally, we can form finite pushouts in the operator system category by taking pullbacks in the category of pointed compact nc convex sets.

\begin{definition}
Let
\[\begin{tikzcd}
	R \arrow[r,"\phi"] \arrow[d,"\psi"] & S \\
	T
\end{tikzcd}\]
be a diagram of operator systems with ccp maps as morphisms. Let $S$, $T$, and $R$, have respective quasistate spaces $(K,0)$, $(L,0)$, and $(M,0)$. We define the \textbf{pushout} $S\oplus_{R,\phi,\psi} T$ as the operator system
\[
A(K\times_{M,\phi^\ast,\psi^\ast} L,(0,0)),
\]
where
\[
K\times_{M,\phi^\ast,\psi^\ast}L =
\{(x,y)\in K\times L\mid \phi^\ast(x)=\psi^\ast(y)\}\subseteq K\times L,
\]
equipped with the natural maps
\begin{align*}
\iota_S:S\to S\oplus T &\to S\oplus_{R,\phi,\psi} T \quad\text{and}\\
\iota_T:T\to S\oplus T &\to S\oplus_{R,\phi,\psi} T
\end{align*}
which make the diagram
\begin{equation}\label{eq:pushout}\begin{tikzcd}
	R \arrow[r,"\phi"] \arrow[d,swap,"\psi"] & S \arrow[d,"\iota_S"]\\
	T \arrow[r,"\iota_T"] & S\oplus_{R,\phi,\psi} T
\end{tikzcd}\end{equation}
commute.
\end{definition}

When the morphisms $\phi$ and $\psi$ are understood, we will usually just write $S\oplus_R T$ and $K\times_M L$. Note that the coproduct $S\oplus T$ coincides with the pushout $S\oplus_0 T$ of the diagram
\[\begin{tikzcd}
	0 \arrow[r,"0"] \arrow[d,swap,"0"] & S \\
	T
\end{tikzcd}\]
as expected, where $0$ denotes the $0$ operator system.

To verify that $A(K\times_M L,(0,0))$ is an operator system, we need to show that:

\begin{proposition}
$(K\times_ML,(0,0))$ is pointed.
\end{proposition}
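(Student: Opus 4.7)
The plan is to show that every nc quasistate $\rho: A(K \times_M L, (0, 0)) \to M_n$ coincides with point evaluation at some element of $K \times_M L$. I will construct a candidate point $(x,y)$ via the individual pointedness of $(K, 0)$ and $(L, 0)$, verify $\phi^\ast(x) = \psi^\ast(y)$ using pointedness of $(M, 0)$, and then identify $\rho$ with evaluation at $(x,y)$ via a Hahn-Banach density argument.

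First I would compose $\rho$ with the canonical ccp maps $\iota_S: S \to A(K \times_M L, (0, 0))$ and $\iota_T: T \to A(K \times_M L, (0, 0))$ to obtain ccp maps $f := \rho \circ \iota_S: S \to M_n$ and $g := \rho \circ \iota_T: T \to M_n$. Since $(K, 0)$ and $(L, 0)$ are pointed, there exist unique $x \in K_n$ and $y \in L_n$ with $f = \ev_x$ and $g = \ev_y$. The relation $\iota_S \circ \phi = \iota_T \circ \psi$ holds on $K \times_M L$ by the very definition of the pullback: for $(x', y') \in K \times_M L$ and $r \in R = A(M,0)$,
\[
\iota_S(\phi(r))(x',y') = \phi(r)(x') = r(\phi^\ast(x')) = r(\psi^\ast(y')) = \psi(r)(y') = \iota_T(\psi(r))(x',y').
\]
Applying $\rho$ to both sides yields $r(\phi^\ast(x)) = f(\phi(r)) = g(\psi(r)) = r(\psi^\ast(y))$ for every $r \in R$. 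Since $(M, 0)$ is pointed, $R$ separates points of $M$, so $\phi^\ast(x) = \psi^\ast(y)$, and hence $(x, y) \in (K \times_M L)_n$.

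It remains to show $\rho = \ev_{(x,y)}$ on all of $A(K \times_M L, (0, 0))$. By construction the two maps agree on $\iota_S(S) + \iota_T(T)$, which is precisely the image of the restriction map $\pi: A(K \times L, (0, 0)) \to A(K \times_M L, (0, 0))$, using the decomposition $A(K \times L, (0, 0)) \cong A(K, 0) \oplus A(L, 0)$ from Proposition \ref{prop:product_affines}. So it suffices to show that $\pi$ has norm-dense image, and this is the main obstacle. By Hahn-Banach, density of the image is equivalent to injectivity of the Banach space adjoint $\pi^\ast : A(K \times_M L, (0, 0))^\ast \to A(K \times L, (0, 0))^\ast$, which, by splitting any functional into its real and imaginary self-adjoint parts, reduces to injectivity on self-adjoint elements. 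Applying Proposition \ref{prop:span_dual} to both $K \times_M L$ and $K \times L$, the self-adjoint restriction of $\pi^\ast$ is identified with the inclusion $\Span_\R (K \times_M L)_1 \hookrightarrow \Span_\R (K \times L)_1$ in the ambient dual operator space, which is trivially injective. Continuity of $\rho$ and $\ev_{(x,y)}$ then extends their agreement from $\pi(A(K \times L,(0,0)))$ to all of $A(K \times_M L, (0, 0))$, completing the proof.
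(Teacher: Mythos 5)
Your proof is correct and follows essentially the same route as the paper's: both identify the candidate point using the pointedness of $(K,0)$ and $(L,0)$ (equivalently, of $(K\times L,(0,0))$), use the commuting square to verify $\phi^\ast(x)=\psi^\ast(y)$, and conclude by density of the image of the restriction map $A(K\times L,(0,0))\to A(K\times_M L,(0,0))$. The only difference is that you prove that density explicitly via Proposition \ref{prop:span_dual} and Hahn--Banach, whereas the paper relies on the general fact, recorded at the start of Section \ref{sec:extension}, that such restriction maps always have dense range.
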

\begin{proof}
Let $\rho:A(K\times_M L,(0,0))\to M_n$ be an nc quasistate. Pulling $\rho$ back to $A(K\times L,(0,0))$ gives a point evaluation at some point $(x,y)\in K\times L$. It will suffice to show that $(x,y)\in K\times_M L$, in which case $\rho$ must be point evaluation at $(x,y)$.

We must show that $\phi^\ast(x)=\psi^\ast(y)$ in $M$. Given $a\in R\cong A(M,0)$. Since the diagram \eqref{eq:pushout} commutes, upon pulling back to $S\oplus T$, we have
\[
\rho(\iota_S\phi(a)) =
(\phi(a)\oplus 0)(x,y)=
(0\oplus \psi(a))(x,y) =
\rho(\iota_T\psi(a)),
\]
that is, $\phi(a)(x)=a(\phi^\ast(x))=\psi(a)(y)=a(\psi^\ast(y))$. Since $a\in R=A(M,0)$ was arbitrary, this proves $\phi^\ast(x)=\psi^\ast(y)$, so $(x,y)\in K\times_M L$.
\end{proof}

\begin{proposition}
The diagram \ref{eq:pushout} is a pushout in the category of operator systems with ccp maps as morphisms.
\end{proposition}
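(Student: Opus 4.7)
My plan is to use the contravariant categorical equivalence of Theorem \ref{thm:nonunital_nc_kadison} between operator systems with ccp maps and pointed compact nc convex sets with pointed continuous affine nc maps. Under any such equivalence, pushouts in one category correspond to pullbacks in the other. So it suffices to show that the dual diagram
\[\begin{tikzcd}
	(K\times_M L,(0,0)) \arrow[r,"\pi_L"] \arrow[d,swap,"\pi_K"] & (L,0) \arrow[d,"\psi^\ast"]\\
	(K,0) \arrow[r,"\phi^\ast"] & (M,0)
\end{tikzcd}\]
is a pullback in the category of pointed compact nc convex sets with pointed continuous affine nc maps as morphisms, where $\pi_K$ and $\pi_L$ are the coordinate projections.

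By the previous proposition, $(K\times_M L,(0,0))$ is a pointed compact nc convex set, and by construction, the projections $\pi_K$ and $\pi_L$ are pointed continuous affine nc maps satisfying $\phi^\ast\circ\pi_K=\psi^\ast\circ\pi_L$. To verify the universal property, suppose $(N,0)$ is a pointed compact nc convex set equipped with pointed continuous affine nc maps $\alpha:N\to K$ and $\beta:N\to L$ such that $\phi^\ast\circ \alpha=\psi^\ast\circ\beta$. Define $\gamma:N\to K\times L$ by $\gamma(x)=(\alpha(x),\beta(x))$. Then $\gamma$ is graded, continuous, and respects nc convex combinations because $\alpha$ and $\beta$ do, and it satisfies $\gamma(0)=(0,0)$ so it is pointed. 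The compatibility $\phi^\ast\alpha=\psi^\ast\beta$ ensures that $\gamma(x)\in K\times_M L$ for every $x\in N$, so $\gamma$ factors through $K\times_M L$. Moreover, $\pi_K\circ\gamma=\alpha$ and $\pi_L\circ\gamma=\beta$. Uniqueness of $\gamma$ is immediate since the projections $\pi_K,\pi_L$ jointly separate points of $K\times_M L$.

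Applying the contravariant equivalence of Theorem \ref{thm:nonunital_nc_kadison} then converts this pullback square into the pushout square \eqref{eq:pushout} in the category of operator systems with ccp maps as morphisms, which completes the argument. I don't anticipate any serious technical obstacle: the only thing to be careful about is that the notion of ``pointed continuous affine nc map'' between the quasistate spaces does indeed correspond precisely to ``ccp map'' between the associated operator systems, but this is exactly the content of Theorem \ref{thm:nonunital_nc_kadison}, which makes the translation between the pushout and pullback universal properties fully automatic.
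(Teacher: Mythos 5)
Your proposal is correct and follows essentially the same route as the paper: both reduce the claim to showing that $(K\times_M L,(0,0))$ with the coordinate projections is a pullback in the category of pointed compact nc convex sets, and then invoke the contravariant equivalence of Theorem \ref{thm:nonunital_nc_kadison}. The only difference is that you spell out the verification of the pullback universal property, which the paper leaves as ``fairly immediate,'' noting only that the point-weak-$\ast$ topology on $K\times_M L$ is the restriction of the product topology.
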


\begin{proof}
It is easiest to verify that the diagram
\[\begin{tikzcd}
	(K\times_M L,(0,0)) \arrow[r]\arrow[d] & (K,0) \arrow[d,"\phi^\ast"] \\
	(L,0) \arrow[r,"\psi^\ast"]& (M,0)
\end{tikzcd}\]
is a pullback in the category of pointed compact nc convex sets with pointed continuous affine nc functions as morphisms, where the unlabeled maps are just the coordinate projections. Checking this is fairly immediate, using the fact that the point-weak-$\ast$ topology on $K\times_M L\subseteq K\times L$ coincides with the restriction of the product topology. By the contravariant equivalence of categories Theorem \ref{thm:nonunital_nc_kadison}, it follows that \eqref{eq:pushout} is a pushout.
\end{proof}

\begin{proposition}
If $S$ and $T$ are dualizable operator systems, then any pushout $S\oplus_{R,\phi,\psi} T$ is also dualizable.
\end{proposition}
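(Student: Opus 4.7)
The plan is to reduce this immediately to the coproduct case already handled in Proposition \ref{prop:coproduct_dualizable}, by viewing $(K\times_M L,(0,0))$ as a pointed compact nc convex subset of $(K\times L,(0,0))$ and invoking the hereditary property of dualizability established in Corollary \ref{cor:subset_dualizable}.

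First I would observe that by definition $K\times_M L=\{(x,y)\in K\times L\mid \phi^\ast(x)=\psi^\ast(y)\}$ is cut out inside $K\times L$ by an equation between pointed continuous affine nc maps, so it is point-weak-$\ast$ closed, nc convex, and contains the distinguished point $(0,0)$. The previous proposition already shows that $(K\times_M L,(0,0))$ is pointed, and $(K\times L,(0,0))$ is pointed by Proposition \ref{prop:product_affines}. This places us exactly in the setting of Corollary \ref{cor:subset_dualizable}.

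Next I would apply Proposition \ref{prop:coproduct_dualizable}: since $S$ and $T$ are dualizable, the operator system $S\oplus T=A(K\times L,(0,0))$ is dualizable. Corollary \ref{cor:subset_dualizable} then yields dualizability of $A(K\times_M L,(0,0))=S\oplus_{R,\phi,\psi} T$, completing the proof.

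There is no real obstacle; the only mildly nontrivial point is keeping the bookkeeping straight between the operator system side and the nc convex side. If one prefers a self-contained verification in place of citing Corollary \ref{cor:subset_dualizable}, one can use Theorem \ref{thm:dualizable_intrinsic}(5) directly: the inclusions
\[
(K\times_M L)-\R_+(K\times_M L)\subseteq (K\times L)-\R_+(K\times L),\qquad \R_+(K\times_M L)\subseteq \R_+(K\times L)
\]
imply that
\[
\big((K\times_M L)-\R_+(K\times_M L)\big)\cap \R_+(K\times_M L)\subseteq \big((K\times L)-\R_+(K\times L)\big)\cap \R_+(K\times L),
\]
and the right-hand side is bounded by the coproduct case, hence so is the left-hand side. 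Either route delivers the conclusion in one short step.
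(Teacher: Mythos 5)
Your proof is correct and follows exactly the paper's own argument: apply Proposition \ref{prop:coproduct_dualizable} to get dualizability of $S\oplus T=A(K\times L,(0,0))$, then use Corollary \ref{cor:subset_dualizable} with the inclusion $(0,0)\in K\times_M L\subseteq K\times L$. The optional self-contained check via Theorem \ref{thm:dualizable_intrinsic}(5) is a valid unwinding of that corollary but adds nothing essential.
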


\begin{proof}
This follows from Proposition \ref{prop:coproduct_dualizable} combined with Corollary \ref{cor:subset_dualizable} used with the inclusion $(0,0)\subseteq K\times_M L\subseteq K\times L$.
\end{proof}

It follows by induction that any pushout of a finite family of dualizable operator systems is again dualizable.

\bibliography{references}
\bibliographystyle{amsplain}

\end{document}